\def\todo#1{\marginpar{\raggedright \tiny #1}}
\def\arraypar#1{\parbox[c]{\textwidth - 2cm}{\centering #1}}
\def\clap#1{\hbox to 0pt{\hss#1\hss}}
\makeatletter \@addtoreset{equation}{section}
\makeatletter \@addtoreset{enunciato}{section}
\newcounter{enunciato}[section]
\newtheorem{ittheorem}{Theorem}
\newtheorem{itlemma}{Lemma}
\newtheorem{itproposition}{Proposition}
\newtheorem{itdefinition}{Definition}
\newtheorem{itremark}{Remark}
\newtheorem{itclaim}{Claim}
\newtheorem{itfact}{Fact}
\newtheorem{itconjecture}{Conjecture}
\newtheorem{itcorollary}{Corollary}
\newenvironment{theorem}{\addtocounter{enunciato}{1}
\begin{ittheorem}}{\end{ittheorem}}
\newenvironment{lemma}{\addtocounter{enunciato}{1}
\begin{itlemma}}{\end{itlemma}}
\newenvironment{proposition}{\addtocounter{enunciato}{1}
\begin{itproposition}}{\end{itproposition}}
\newenvironment{definition}{\addtocounter{enunciato}{1}
\begin{itdefinition}}{\end{itdefinition}}
\newenvironment{remark}{\addtocounter{enunciato}{1}
\begin{itremark}}{\end{itremark}}
\newenvironment{conjecture}{\addtocounter{enunciato}{1}
\begin{itconjecture}}{\end{itconjecture}}
\newenvironment{corollary}{\addtocounter{enunciato}{1}
\begin{itcorollary}}{\end{itcorollary}}
\newcommand{\be}[1]{\begin{equation}\label{#1}}
\newcommand{\ee}{\end{equation}}
\newcommand{\bl}[1]{\begin{lemma}\label{#1}}
\newcommand{\el}{\end{lemma}}
\newcommand{\br}[1]{\begin{remark}\label{#1}}
\newcommand{\er}{\end{remark}}
\newcommand{\bt}[1]{\begin{theorem}\label{#1}}
\newcommand{\et}{\end{theorem}}
\newcommand{\bd}[1]{\begin{definition}\label{#1}}
\newcommand{\ed}{\end{definition}}
\newcommand{\bp}[1]{\begin{proposition}\label{#1}}
\newcommand{\ep}{\end{proposition}}
\newcommand{\bc}[1]{\begin{corollary}\label{#1}}
\newcommand{\ec}{\end{corollary}}
\newcommand{\bcj}[1]{\begin{conjecture}\label{#1}}
\newcommand{\ecj}{\end{conjecture}}
\newcommand{\bpr}{\begin{proof}}
\newcommand{\epr}{\end{proof}}
\DeclareMathOperator\Cov{Cov}
\def\Z{\mathbb{Z}}
\def\N{\mathbb{N}}
\def\R{\mathbb{R}}
\def\P{\mathbb{P}}
\def\E{\mathbb{E}}
\newcommand{\1}[1]{{\mathbbm{1}}_{#1}}
\def \ba {\begin{array}}
\def \ea {\end{array}}
\def \P  {{\mathbb P}}
\def \E  {{\mathbb E}}
\def \rhoprime {\bar{\rho}}
\def \cA {{\mathcal A}}
\def \cB {{\mathcal B}}
\def \cC {{\mathcal C}}
\def \cH {{\mathcal H}}
\def \cG {{\mathcal G}}
\def \cF {{\mathcal F}}
\def \cW {{\mathcal W}}
\def \cU {{\mathcal U}}
\def \cS {{\mathcal S}}
\DeclareSymbolFont{symbolsC}{U}{pxsyc}{m}{n}
\DeclareMathSymbol{\opentimes}{\mathrel}{symbolsC}{93}
\newcommand{\um}{{\angle}}
\newcommand{\tres}{{\mathbin{\, \text{\rotatebox[origin=c]{180}{$\angle$}}}}}
\newcommand{\treze}{{\mathbin{\text{\rotatebox[origin=c]{35}{$\opentimes$}}}}}
\newcounter{constant}
\newcommand{\newconstant}[1]{\refstepcounter{constant}\label{#1}}
\newcommand{\useconstant}[1]{c_{\textnormal{\tiny \ref{#1}}}}
\newcolumntype{e}{>{\displaystyle}r @{\,} >{\displaystyle}c @{\,} >{\displaystyle}l}
\newcommand{\footremember}[2]{%
    \footnote{#2}
    \newcounter{#1}
    \setcounter{#1}{\value{footnote}}%
}
\newcommand{\footrecall}[1]{%
    \footnotemark[\value{#1}]%
} 
\title{Random walk on random walks: higher dimensions}
\author{
  Oriane Blondel \footnote{CNRS, Univ Lyon, Université Claude Bernard Lyon 1, CNRS UMR 5208, Institut Camille Jordan, 43 boulevard du 11 novembre 1918 -- 69622, France}
  \and
  Marcelo R.\ Hilário \footnote{Universidade Federal de Minas Gerais, Dep. de Matemática, 31270-901 Belo Horizonte} \footremember{NYUSh}{NYU-Shanghai, 1555 Century Av., Pudong Shanghai, CN 200122}
    \and
  Renato S.\ dos Santos \footnote{Weierstrass Institute for Applied Analysis and Stochastics, Mohrenstr. 39, 10117 Berlin}
  \and
  Vladas Sidoravicius \footnote{Courant Institute, NYU, 251 Mercer Street New York, NY 10012} \footrecall{NYUSh}
  \and
  Augusto Teixeira \thanks{Instituto de Matemática Pura e Aplicada, Estrada Dona Castorina 110, 22460-320 Rio de Janeiro}
  }
\date{\today}
\begin{document}

\maketitle

\begin{abstract}
We study the evolution of a random walker on a conservative dynamic random environment composed of independent particles
performing simple symmetric random walks,
generalizing results of \cite{HHSST14}
to higher dimensions and more general transition kernels
without the assumption of uniform ellipticity or nearest-neighbour jumps.
Specifically, we obtain a strong law of large numbers,
a functional central limit theorem and large deviation estimates for the position of the random walker
under the annealed law in a high density regime.
The main obstacle is the intrinsic lack of monotonicity in higher-dimensional, non-nearest neighbour settings.
Here we develop more general renormalization and renewal schemes that allow us to overcome this issue.
As a second application of our methods, we provide an alternative proof of the ballistic behaviour of the front of (the discrete-time version of) the infection model introduced in \cite{KS05}.

\bigskip

{\bf Keywords:} Random walk; dynamic random environment; law of large numbers; central limit theorem; large deviations; renormalization; regeneration.

{\bf AMS MSC 2010:} Primary 60F15; 60K35; Secondary 82B41; 82C22; 82C44.
\end{abstract}

\vfill

\pagebreak


\section{Introduction}
\label{s:intro}

Random walks on random environments are models for the movement of a tracer particle in a disordered medium,
and have been the subject of intense research for over 40 years.
The seminal works \cite{KKS75, Sinai82, Solomon75},
concerning one-dimensional random walk in static random environment (i.e., constant in time),
established a rich spectrum of asymptotic behaviours that can be very different from that of usual random walks.
In higher dimensions, important questions remain open despite much investigation.
For excellent expositions on this topic, see \cite{Sznitman04, Zeitouni04}.
The \emph{dynamic} version of the model, i.e., when the random environment is allowed to evolve in time,
has been also studied for over three decades (see e.g.\ \cite{BIMP92, Madras86}).
However, models with both space and time correlations have been only considered relatively recently.
For an overview, we refer to the PhD theses \cite{Avenathesis, Santosthesis}.
We will abbreviate ``RWRE'' for random walk in static random environment, 
and ``RWDRE'' for random walk in dynamic random environment.

Asymptotic results for RWDRE under general conditions were derived e.g.\ in \cite{AdHR10, AdHR11, BV16, CDRRS13, dHdSS13, OS16, RV13},
often requiring \emph{uniform mixing} conditions on the random environment 
(implying e.g.\ that the conditional distribution of the environment at the origin
given the initial state uniformly approaches a fixed law for large times).
This uniformity can be relaxed in particular examples, e.g.\ \cite{B16, dHdS14, MV15} (supercritical contact process),
or under additional assumptions, e.g.\ \cite{ABF16a, ABF16b} (spectral gap, weakly non-invariant) and \cite{BH14} (attractivity).
But arguably, some of the most challenging random environments are given by conservative particle systems,
due to their poor mixing properties. 
Such cases have been considered in \cite{AFJV15, AJV14, AdSV13, HS15, dS14} (simple symmetric exclusion), 
and in \cite{HHSST14, dHKS14} (independent random walks).
Each of these works imposes additional conditions and explores very specific properties of the environment in question.
In particular, the works \cite{HHSST14, dHKS14, HS15} introduce \emph{perturbative} approaches,
where parameters of the system are driven to a limiting value where the behaviour is known.

In the present paper, we consider as in \cite{HHSST14} 
dynamic random environments given by systems of independent simple symmetric random walks.
As mentioned above, asymptotic results for this model are challenging since
the random environment is conservative and has slow and non-uniform mixing.
We extend the results of \cite{HHSST14} to higher dimensions and more general transition kernels.
Additional difficulties arise in this setting due to the loss of monotonicity properties 
present in the one-dimensional, nearest-neighbour case.
Our main results are a strong law of large numbers, a functional central limit theorem
and large deviation bounds for the position of the random walker under the annealed law in a high density regime.
As an additional application of our methods, we re-obtain a (slightly improved) ballisticity condition for (the discrete-time version of) the
infection-spread model considered in \cite{KS05}.
Some tools developed in the present paper will be also used in the accompanying article \cite{BHST16b}.

\subsection{Definition of the model and main results}
\label{ss:mainresults}

Denote by $\N = \{1, 2, \ldots\}$ the set of positive integers and let $\Z_+ := \{0 \} \cup \N$.
Fix $d \in \N$ and let $N = (N(x,t))_{x \in \Z^d, t \in \Z_+}$ be a random process 
with each $N(x,t)$ taking values in $\Z_+$,
which we call the \emph{random environment}. 
Let $\alpha: \Z_+ \times \Z^d \to [0,1]$ satisfy
\begin{equation}\label{e:alphaisprob}
\sum_{x \in \Z^d} \alpha(k,x) = 1 \quad \text{ for every $k \in \Z_+$}.
\end{equation}
For a fixed a realization of $N$, the random walker in random environment $X = (X_t)_{t \in \Z_+}$ 
is the Markov chain that, when at position $x \in \Z^d$ at time $t \in \Z_+$,
jumps to $x+z \in \Z^d$ with probability $\alpha(N(x,t),z)$.
Note that the chain is time-inhomogeneous when the random environment is dynamic.
The law of $X$ conditioned on $N$ is called the \emph{quenched law}, and the quenched law averaged over the law of $N$ is called the \emph{annealed law}.

We are interested in the case where $N$ is given by the occupation numbers of a system of simple symmetric random walks in equilibrium.
More precisely, fix $\rho \in (0,\infty)$ and let $(N(x,0))_{x \in \Z^d}$ be an i.i.d.\ collection of Poisson($\rho$) random variables.
From each site $x \in \Z^d$, start $N(x,0)$ independent simple symmetric random walks (which can be lazy or not).
The value of $N(x,t)$, $t > 0$ is then defined as the number of random walks present at $x$ at time $t$.
The process $N(\cdot,t)$ is a Markov chain in equilibrium on the state-space $(\Z_+)^{\Z^d}$.
As already mentioned, $N$ has relatively poor mixing properties; 
for example, it can be shown that $\Cov(N(0,t), N(0,0))$ decays as $t^{-d/2}$ when $t \to \infty$.

Let $|\cdot|$ denote the $\ell^1$-norm on $\Z^d$.
We will make the following assumptions on $\alpha$:
\begin{enumerate}

\item[]\textbf{Assumption (S):}
The set of possible steps
\begin{equation}\label{e:defcS}
\cS := \left\{ x \in \Z^d \colon\, \exists\, k \in \Z_+, \alpha(k,x)>0 \right\}
\end{equation}
is finite. We set $\mathfrak{R} := \max_{x \in \cS} |x|$, which we call the \emph{range} of the random walk.

\item[]\textbf{Assumption (D):}
We assume that
\begin{equation}\label{e:defvbullet}
v_\bullet := \liminf_{k \to \infty} \sum_{x \in \cS} \alpha(k,x) x \cdot e_1 >0,
\end{equation}
where $e_1$ is the first of the canonical base vectors $e_1, \dots, e_d$ of $\mathbb{Z}^d$.

\item[]\textbf{Assumption (R):}
There exists $x_\bullet \in \cS$ satisfying $x_\bullet \cdot e_1>0$ and
\begin{align}\label{e:condxbullet}
\liminf_{k \to \infty} \alpha(k,x_\bullet) >0.
\end{align}

\end{enumerate}

Assumption (D) means that, for sufficiently high particle density, 
the random walker has a local drift in direction $e_1$.
Assumptions (S) and (R) are technical; (S) simplifies the execution of many technical steps while (R) ensures some regularity for $\alpha(k,\cdot)$ over large enough $k \in \N$.
Note that (R) follows from (D) if either $\alpha(k,\cdot)$ is constant for sufficiently large $k$, 
or the random walker moves by nearest-neighbour steps, i.e., $\cS \subset \{x \in \Z^d \colon\, |x| \le 1\}$.

Denote by $\P^\rho$ the joint law of $N$ and $X$ 
and by $\E^\rho$ the corresponding expectation.
We can now state the main result of the present paper.

\begin{theorem}\label{thm:main}
For every $v_\star \in (0,v_\bullet)$,
there exists a $\rho_{\star} = \rho_{\star} (\alpha, v_\star) <\infty$ large enough such that, for every $\rho \ge \rho_{\star }$,
there exists a $v = v(\alpha, \rho) \in \R^d$ with $v \cdot e_1 \ge v_\star$ and:

\begin{enumerate}
\item[(i)](Law of large numbers)
  \begin{equation}
    \label{e:lln}
   \lim_{t \to \infty} \frac{X_t}{t} = v \quad \text{$\P^\rho-$almost surely.}
  \end{equation}

\item[(ii)](Functional central limit theorem)
  There exists a deterministic covariance matrix $\Sigma = \Sigma(\alpha, \rho)$ 
  such that, under $\P^\rho$,
  \begin{equation}
    \label{e:clt}
    \left(\frac{X_{[nt]} - v [nt]}{\sqrt{n}} \right)_{t \ge 0} \Rightarrow B^\Sigma
  \end{equation}
  where $B^\Sigma$ is a Brownian motion on $\mathbb{R}^d$ with covariance matrix $\Sigma$
  and ``$\Rightarrow$'' denotes convergence in distribution as $n \to \infty$
  with respect to the Skorohod topology.

\item[(iii)](Large deviation bounds) For every $\varepsilon>0$, there exists $c > 0$ such that
  \begin{equation}
    \label{e:ldb}
    \P^\rho \left( \left|\frac{X_t}t - v \right| > \varepsilon \right) \leq {c}^{-1} \exp\{-c (\log t)^{3/2}\} \;\;\; \text{ for all $t \in \N$}.
  \end{equation}

\end{enumerate}
\end{theorem}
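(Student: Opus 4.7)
The plan is to develop a multiscale renormalization that certifies ballistic crossings of space-time slabs, use it to build an approximately i.i.d.\ regeneration structure for $(X_t)$, and then deduce the three items from classical limit theorems for sums of i.i.d.\ random variables. For the renormalization, I would set up a growing sequence of scales $L_k$ and, at each scale, declare a space-time box of spatial diameter $L_k$ and temporal length of order $L_k/v_\star$ to be \emph{good} when every trajectory starting on its left face exits through the right face before the box's lifetime ends. The key input is that for large $\rho$, Poisson concentration guarantees that $N(x,t)$ is typically large, so by assumptions (D) and (R) the walker has a local drift of at least $v_\star$. A Peierls-type cascade shows that a box at scale $L_{k+1}$ being bad forces several bad sub-boxes at scale $L_k$ together with an atypical environmental event, which in turn is controlled via Poisson concentration for the number of particles visiting the box. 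Iterating yields that the probability of a bad box at scale $L_k$ decays like $\exp\{-c(\log L_k)^{3/2}\}$, which is what ultimately produces the exponent in \eqref{e:ldb}.

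Next, I would use the crossing estimates to construct regeneration times $\tau_1<\tau_2<\cdots$ along the $e_1$-direction, at each of which $X_{\tau_k}\cdot e_1$ reaches a new maximum that is never revisited, while the environment in the forward half-space is coupled to a freshly sampled one. Because the dynamics is conservative, particles from the past may enter the forward region, so the coupling is not exact; however, the renormalization bounds the probability of such events by $\exp\{-c(\log L)^{3/2}\}$ at the relevant scale $L$. Enlarging the probability space to carry the coupling, this yields a sequence of increments $(\tau_{k+1}-\tau_k,\,X_{\tau_{k+1}}-X_{\tau_k})$ that is genuinely i.i.d.\ with stretched-log tails. This step is the main obstacle: in the one-dimensional, nearest-neighbor setting of \cite{HHSST14}, monotonicity allowed a direct stochastic-domination coupling of environments, but in the present generality monotonicity is lost, so the renewal construction must rely exclusively on the renormalization to decouple the forward environment from the walker's history; engineering a renewal structure that survives this loss of monotonicity is the heart of the argument.

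Once this i.i.d.\ structure is available, the three items follow from textbook arguments. The LLN in \eqref{e:lln} follows from Kolmogorov's strong law applied to the regeneration increments, with a deterministic $v$ whose projection on $e_1$ is $\ge v_\star$ by construction. The FCLT \eqref{e:clt} follows from Donsker's invariance principle applied to the partial sums of the regeneration displacements, together with a tightness estimate controlling the oscillations between consecutive regenerations, both justified by the moment bounds implied by the stretched-log tails. Finally, the bound \eqref{e:ldb} follows by combining those tails with a Chernoff-type bound on the number of regenerations in $[0,t]$ and a union bound over indices close to $t/\E[\tau_2-\tau_1]$, which quantitatively reproduces the exponent $(\log t)^{3/2}$ already present at the renormalization level.
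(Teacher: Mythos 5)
Your high-level plan (renormalization to certify ballisticity, a regeneration structure along $e_1$, then classical limit theorems for the renewal increments) matches the paper's architecture, and the deduction of (i)--(iii) from an i.i.d.\ renewal sequence with stretched-exponential tails is standard and correct. However, there are two concrete gaps in the route you sketch.

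First, the renormalization. You propose to declare a box \emph{good} when every trajectory starting on its left face crosses to the right face in time, and to run a Peierls cascade on this crossing property. That cascade does not close: if the big box fails, a single failed sub-box along the trajectory already explains it — once the trajectory leaves one sub-box through the wrong face, it need not re-enter any subsequent sub-box on its left face, so the crossing criterion says nothing there. You therefore only get one bad sub-box, which yields a linear recursion $p_{k+1}\lesssim p_k$ rather than the quadratic recursion $p_{k+1}\lesssim C L_k^{2d+1}(p_k^2+\textrm{error})$ needed for the super-polynomial decay behind the $(\log t)^{3/2}$ exponent. The paper avoids this by renormalizing an \emph{average} of a local observable of the environment along \emph{all} $\mathfrak{R}$-Lipschitz paths (Definition 3.10 and Lemma 3.11): if the average over a long path is below $v_{k+1}<v_k$, then at least two, time-separated sub-intervals must have averages below $v_k$ (the drop $v_k-v_{k+1}$ is engineered for exactly this). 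The ballisticity of the walker is then derived separately from a quenched martingale/Azuma deviation bound (Proposition 4.3), given the environment estimate (Proposition 4.2). Decoupling the two well-separated bad sub-boxes still requires a sprinkling argument (Theorem 3.4/Theorem 3.7) because the environment is conservative and mixes poorly — you do not address this at all.

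Second, the regeneration. You describe an approximate coupling ("the coupling is not exact; however, the renormalization bounds the probability of such events"), enlarge the probability space, and then assert the increments are "genuinely i.i.d." This doesn't follow: an approximate coupling with stretched-log error does not produce an exactly i.i.d.\ renewal sequence, and without exact independence the classical LLN/Donsker arguments need to be replaced by quantitative versions that track the coupling error. The paper instead builds an \emph{exact} renewal structure by partitioning trajectories into those hitting only the forward cone $\um(y)$, only the backward region $\tres(y)$, or both ($W^\um_y,W^\tres_y,W^\treze_y$ in (5.5)), and defining $\tau$ as the first record time at which (a) no trajectory lies in $W^\treze_{Y_{R_k}}$ and (b) the walker stays in the forward cone forever. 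Conditioned on these events, the forward environment is an independent Poisson process, so the post-$\tau$ path is exactly distributed as the whole path under the conditional law $\P^\um$ (Theorem 5.2). Note that the i.i.d.\ increments are under $\P^\um$, not under $\P^\rho$ itself — a conditioning you do not mention but which matters when you invoke Kolmogorov/Donsker. You also omit any substitute for the monotone backtracking estimate of the one-dimensional case; the paper replaces it with Lemma 5.5, which uses the FKG inequality for the Poisson point process together with Propositions 4.2--4.3, and this is one of the paper's genuinely new ingredients.
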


Theorem~\ref{thm:main} may be interpreted as follows:
Assumption (D) ensures that the random walker has a positive local drift in direction $e_1$
inside densely occupied regions of $\Z^d$.
Theorem \ref{thm:main} shows that, when the density $\rho$ is large enough,
this behaviour ``takes over'', i.e., the random walker exhibits a macroscopic drift in direction $e_1$,
which introduces enough mixing for a law of large numbers and a central limit theorem to hold.

Note that the matrix $\Sigma$ in item $(ii)$ above might be zero; 
indeed, our assumptions on $\alpha$ do not exclude the case that $X$ is deterministic.
However, $\Sigma$ will be non-zero as soon as $X$ is non-trivial, and it will be non-singular
under mild ellipticity assumptions such as e.g.\ $\sup_{k \in \Z_+} \alpha(k,\pm e_i) >0$ for all $1 \le i \le d$; 
see \eqref{e:formulaSigma}.
The speed of the decay in \eqref{e:ldb} is not optimal, and only reflects the limitations of our methods.

As previously mentioned, one of the biggest obstacles to obtain Theorem~\ref{thm:main} 
are the poor space-time mixing properties of the random environment.
A method to overcome this difficulty in ballistic situations was developed in \cite{HHSST14} 
for the high density regime in one dimension, 
see also \cite{HS15} for a similar approach when the random environment is given by a one-dimensional simple symmetric exclusion process.
However, these results rely on monotonicity properties of the random walker
that are in general not valid in higher-dimensional and/or non-nearest neighbour settings. 
A coupling method (cf.\ \cite{HS14}, \cite{BH14}) can sometimes be used to deal with this problem,
but is limited to cases where $\alpha$ belongs to a set of at most two transition kernels.
Here we follow a different approach, exploiting properties of the random environment 
through more general renormalization and renewal schemes that also bypass the requirement of uniform ellipticity.

As another application of our methods, we provide a short proof of ballisticity for
the one-dimensional discrete-time version of the model for the spread of an infection studied in \cite{KS05}.
In this model, particles can be of two types: healthy or infected.
Fix $\rho \in (0,\infty)$.
At time zero, we place on each site of $\Z$ an independent number of particles, each distributed as a Poisson($\rho$) random variable.
Given the assignment of particles to sites, we declare all particles to the right of the origin to be healthy and all particles 
to its left, including those on the origin, to be infected.
Then the system evolves as follows:
each particle, regardless of its state, 
moves independently as a discrete-time simple symmetric random walk (with a fixed random walk transition kernel), and any healthy particle sharing a site with an infected particle becomes immediately infected.
We are interested in the position $\bar{X}_t$ of the rightmost infected particle at time $t \in \Z_+$.
Still denoting by $\P^\rho$ the underlying probability measure, we obtain:

\begin{proposition}\label{p:infection}
  For any $\rho > 0$, there exist $v > 0$ and $c>0$ such that
  \begin{equation}\label{e:infection}
    \P^\rho \left( \bar{X}_{t} < v t \right) \leq c^{-1} \exp\{- c (\log t)^{3/2} \} \;\; \text{ for all } t \in \N.
  \end{equation}
\end{proposition}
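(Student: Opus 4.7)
The plan is to dominate the infection front $\bar{X}_t$ from below by an auxiliary random walker $\tilde{X}_t$ that fits the framework of Theorem~\ref{thm:main}, and then transfer the large-deviation bound \eqref{e:ldb}. I would construct $\tilde{X}$ on the same probability space as the infection dynamics, starting at $\tilde{X}_0 = 0$, as follows: at each time $t$, writing $k_t := N(\tilde{X}_t, t)$ and letting $\xi_1,\dots,\xi_{k_t} \in \{\pm 1\}$ be the next SRW steps of the $k_t$ particles currently at $\tilde{X}_t$, set $\tilde{X}_{t+1} = \tilde{X}_t + \max(\xi_1,\dots,\xi_{k_t})$ with the convention $\max\emptyset := -1$. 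Given $N$, this is a RWDRE with transition kernel $\alpha(k,+1) = 1 - 2^{-k}$, $\alpha(k,-1) = 2^{-k}$ for $k \ge 1$ and $\alpha(0,-1) = 1$. One readily verifies the three assumptions: $\mathfrak{R} = 1$, $v_\bullet = \lim_k (1 - 2^{1-k}) = 1 > 0$, and $x_\bullet = +1$ gives $\liminf_k \alpha(k,+1) = 1$.

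The key coupling claim is that $\tilde{X}_t \le \bar{X}_t$ for all $t$. The proof maintains the inductive invariant that every particle sharing the site $\tilde{X}_t$ at time $t$ is infected (so that $\tilde{X}_t$ itself sits at the position of an infected particle). For $k_t \ge 1$, the particle realising the maximum is infected by the invariant, moves to $\tilde{X}_{t+1}$ and remains infected, and any other particle landing at $\tilde{X}_{t+1}$ gets infected via the infection rule; hence the invariant carries over and $\tilde{X}_{t+1} \le \bar{X}_{t+1}$. When $k_t = 0$ (probability $e^{-\rho}$ per unit time) the walker drifts left, the weak inequality $\tilde{X}_{t+1} \le \bar{X}_{t+1}$ is still preserved since $\bar{X}$ can drop by at most one per step, and the invariant is re-established at the next time $\tilde{X}$ meets a particle that has come from the infected region. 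Initialization requires the minor conditioning on $N(0,0) \ge 1$, which costs a bounded prefactor.

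With this coupling in place, Theorem~\ref{thm:main}(iii) applied to $\tilde{X}$ yields a speed $v > 0$ and the stretched-exponential LDB \eqref{e:ldb}, which immediately implies \eqref{e:infection}, \emph{provided} $\rho$ exceeds the density threshold $\rho_\star$ from that theorem applied to the specific $\alpha$ above. To cover \emph{arbitrary} $\rho > 0$ as claimed in Proposition~\ref{p:infection}, one must abandon the black-box use of Theorem~\ref{thm:main} and apply the renormalization and renewal schemes developed in its proof directly to the infection front. The base step of the renormalization requires only a positive-probability ballistic event at some finite scale $L_0$; by choosing $L_0 = L_0(\rho)$ large enough, a typical spatial window of side $L_0$ contains $\rho L_0 \gg 1$ particles, enough to chain the infection across the window with probability close to one. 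The multi-scale scheme from Theorem~\ref{thm:main}(iii) then propagates this seed into the stretched-exponential bound.

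The main obstacle is precisely this extension to small $\rho$. The expected one-step displacement of the auxiliary walker under the stationary Poisson$(\rho)$ marginal is $1 - 2e^{-\rho/2}$, which is negative for $\rho < 2\log 2$; so the naive single-site RWDRE has no drift at low density and Theorem~\ref{thm:main} cannot be invoked directly. The cleanest way forward is to redo the renormalization at the level of the infection front itself, replacing the single-site drift by a coarse-grained ballistic event obtained by pooling particles across a block, and then adapting the mixing and renewal estimates from Theorem~\ref{thm:main}'s proof -- which rely on a strong local drift and (near-)uniform ellipticity of $\alpha$ -- to this weaker, coarse-grained input, while exploiting the model-specific monotonicity that infection, once acquired, is never lost.
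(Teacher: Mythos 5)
Your first idea -- couple an auxiliary RWDRE $\tilde X \le \bar X$ and invoke Theorem~\ref{thm:main}(iii) -- is not the route the paper takes, and as you correctly observe it cannot deliver the stated result for arbitrary $\rho$: the effective kernel $\alpha(k,+1)=1-2^{-k}$ has no drift under Poisson$(\rho)$ marginals once $\rho<2\log 2$. Two further caveats: (1) the invariant you propose (``every particle at $\tilde X_t$ is infected'') is false as stated -- a healthy particle arriving at $\tilde X_{t+1}$ from elsewhere only becomes infected at time $t+2$, not $t+1$ -- so the coupling inequality $\tilde X_t\le \bar X_t$, while plausibly salvageable, needs a more careful inductive argument than the one you give; (2) $\tilde X$ as you define it is driven by the actual particle steps, not by the exogenous uniforms $U_y$, so it is not literally the RWDRE to which Theorem~\ref{thm:main} applies, and an extra argument would be needed to show the annealed bound transfers.

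The more serious issue is that you stop exactly where the proof begins. You correctly diagnose that the renormalization must be re-run ``at the level of the infection front itself'' with the window size as the driving parameter, but you do not supply the key reduction that makes this possible. The paper's Lemma~\ref{l:infection_reduction} is that missing step: it shows that since $\bar X_t$ \emph{always} stochastically dominates a simple random walk, it suffices to exhibit, with probability $1-c^{-1}e^{-c(\log L)^{3/2}}$, a positive density $h$ of times along \emph{any} Lipschitz path starting near the origin at which a second particle sits within distance $r$ of the path and with matching parity -- because each such time produces a drift step (a second particle $r$-close has uniformly positive probability of reaching the front within $r$ steps). This ``frequent proximity $\Rightarrow$ positive speed'' reduction is what eliminates the need for a single-site drift and hence for a density threshold. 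You also do not address the adaptedness obstruction that arises when plugging a local function $g_r$ with large $r$ into the renormalization: the paper needs a two-tier construction, first bounding the events $A'_m$ (variable window $L_k$, adapted but not cascading) by a crude Poisson estimate, then passing to a fixed-window family $A_m$ (cascading by Lemma~\ref{l:build_cascade}, adapted, non-increasing) before invoking Theorem~\ref{t:pk_decay} and Proposition~\ref{p:interpolate}. Without the reduction lemma and this adaptedness workaround, the proposal remains a sketch of the strategy rather than a proof.
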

\noindent
The above proposition offers a slight improvement to the deviation bound given in \cite{KS05},
which is an important ingredient in establishing finer results about the infection front.
For example, a similar statement was used in \cite{KS08} to prove a law of large numbers,
and in \cite{BR12} to establish a central limit theorem for (the continuous-time version of) $\bar{X}_t$.

\vspace{10pt}

The rest of the paper is organized as follows.
Section~\ref{ss:proofideas} below contains a short heuristic description of ideas used in our proofs.
In Section~\ref{s:construction}, we give a particular construction of our model with convenient properties.
In Section~\ref{s:renormalization}, we develop a renormalization procedure for general classes of observables,
relying on a key decoupling result for the environment (Theorem~\ref{t:decouple} below) whose proof is given in Appendix~\ref{s:decouple}.
Applications of the renormalization scheme to show ballisticity of the random walker and of the infection front, 
including the proof of Proposition~\ref{p:infection}, 
are discussed in Section~\ref{s:applications}.
Finally, in Section~\ref{s:regmanyRWshighdim} we define and control a regeneration structure for the random walker path
and finish the proof of Theorem~\ref{thm:main}.

Throughout the text, we denote by $c$ a generic positive constant whose value may change at each appearance.
These constants may depend on all model parameters discussed above but,
in Section~\ref{s:renormalization} and in Appendix~\ref{s:decouple}, 
they will not be allowed to depend on $\rho$, as we recall in the beginning of these sections.

\vspace{10pt}
\noindent
{\bf Acknowledgments.}
OB acknowledges the support of the French Ministry of Education through the ANR 2010 BLAN 0108 01 grant.
MH was partially supported by CNPq grants 248718/2013-4 and 406659/2016-8 and by ERC AG ``COMPASP''.
RSdS was supported by the German DFG project KO 2205/13-1.
AT was supported by CNPq grants 306348/2012-8 and 478577/2012-5 and by FAPERJ grant 202.231/2015.
OB, MH and RSdS thank IMPA for hospitality and financial support.
AT and MH thank the CIB for hospitality and financial support.
RSdS thanks the ICJ for hospitality and financial support.
The research leading to the present results benefited from
the financial support of the seventh Framework Program of the European Union (7ePC/2007-2013), 
grant agreement n\textsuperscript{o}266638.
Part of this work was carried on while MH was on a sabbatical year on the University of Geneva. 
He thanks the mathematics department of this university for the financial support.
OB and AT thank the University of Geneva for hospitality and financial support.

\subsection{Proof ideas}
\label{ss:proofideas}

The proof of Theorem~\ref{thm:main} is split into two main steps that can be informally described as: 
a ballisticity condition and a renewal decomposition.
They are performed respectively in Sections~\ref{s:applications} and \ref{s:regmanyRWshighdim}.
Let us now describe them in more detail.

Our first result for the random walker described above is reminiscent of the 
$(T')$-condition of Sznitman (see \cite{Sznitman04}):
in Theorem~\ref{thm:ballisticity}, we show that, for $\rho$ large enough,
$X_t \cdot e_1$ diverges to infinity in a strong sense, i.e., the random walker is ballistic.
This is done via a renormalization argument.
Once ballisticity has been established, our intuition tells us that, as time passes, 
the random walker will see ``fresh environments'' since the particles of the random environment have no drift;
this informal description is made precise by defining a regeneration structure for the path of the random walker.
Here this step must be performed differently from \cite{HHSST14} because of the higher dimensions and non-nearest neighbour transition kernels. 
Moreover, because of the lack of monotonicity, the tail of the regeneration time must also be controlled differently.

Proposition~\ref{p:infection} is proved using a similar argument as for Theorem~\ref{thm:ballisticity}.
First, the problem is reduced to showing that, with large probability, we can frequently find particles 
near $\bar{X}_t$, cf.\ Lemma~\ref{l:infection_reduction}. Indeed, this implies the existence of a density of times where $\bar{X}_t$ behaves as a random walk with a drift, which is enough because it always dominates a simple symmetric random walk.
The reduced problem can then be tackled using the renormalization procedure, driving not $\rho$ to infinity but the size of the window around $\bar{X}_t$ where we look for particles. See Section~\ref{ss:infection}.

\section{Construction}
\label{s:construction}

In this section, we introduce a construction of the environment of simple random walks in terms of a Poisson point process of trajectories as in \cite{HHSST14}.
This construction provides a convenient way to explore certain independence properties of the environment. 
We also provide a construction for the random walker and discuss positive correlations of certain monotone observables of the environment (cf.~Proposition~\ref{prop:FKG} below).

Define the set of trajectories
\begin{equation}
  \label{e:def_W}
  W = \Big\{ w:\Z \to \Z^d \colon\, |w(i + 1) - w(i)| \le 1 \;\; \forall \; i \in \Z \Big\}.
\end{equation}
Note that the trajectories in $W$ are allowed to jump in any canonical direction, as well as to stay put.
We endow the set $W$ with the $\sigma$-algebra $\mathcal{W}$ generated by the canonical coordinates $w \mapsto w(i)$, $i \in \Z$.

Let $(S^{z,i})_{z \in \Z^d, i \in \N}$ be a collection of independent random elements of $W$,
with each $S^{z,i} = (S^{z,i}_\ell)_{\ell \in \Z}$ distributed as
a double-sided simple symmetric random walk on $\Z^d$ started at $z$,
i.e., the past $(S^{z,i}_{-\ell})_{\ell \ge 0}$ and future $(S^{z,i}_{\ell})_{\ell \ge 0}$
are independent and distributed as a simple symmetric random walk on $\Z^d$ started at $z$ (lazy or not).

For a subset $K \subset \mathbb{Z}^{d}\times \Z$, denote by $W_K$ the set of trajectories in $W$ that intersect $K$,
i.e., $W_K := \{w \in W \colon\, \exists\, i \in \Z, (w(i),i) \in K\}$.
This allows us to define the space of point measures
\begin{equation}
  \label{e:Omega}
  \Omega = \Big\{ \omega = \sum_i \delta_{w_i};\; w_i \in W \text{ and } \omega(W_{\{y\}}) < \infty \text{ for every } y \in \Z^d \times \Z \Big\},
\end{equation}
endowed with the $\sigma$-algebra generated by the evaluation maps $\omega \mapsto \omega(W_K)$, $K \subset \Z^d \times \Z$.

Fix $\rho \in (0,\infty)$ and let
$N(x,0)$, $x \in \Z^d$ be i.i.d.\ Poisson($\rho$) random variables.
Defining the random element $\omega \in \Omega$ by
\begin{equation}\label{e:defomega}
\omega := \sum_{z \in \Z^d} \sum_{i \le N(z,0)} \delta_{S^{z,i}},
\end{equation}
it is straightforward to check that $\omega$ is a Poisson point process on $\Omega$
with intensity measure $\rho \mu$, where
\begin{equation}
\label{e:defmu}
  \mu = \sum_{z \in \Z^d} P_z
\end{equation}
and $P_z$ is the law of $S^{z,1}$ as an element of $W$.
Setting then
\begin{equation}\label{e:defN}
N(y) := \omega(W_{\{y\}}), \text{ for $y \in \mathbb{Z}^d \times \mathbb{Z}$},
\end{equation}
we may verify that $N$ has the distribution described in Section \ref{s:intro}.

We enlarge our probability space to support i.i.d.\ random variables $U_y$, $y \in \Z^d \times \Z$ sampled independently from $\omega$, where each $U_y$ is uniformly distributed in the interval $[0,1]$.
We then define $\P^\rho$ to be the joint law of $\omega$ and $U = (U_y)_{y \in \Z^d \times \Z}$.
Our configuration space may be thus identified as $\overline{\Omega} := \Omega \times [0,1]^{\Z^d \times \Z}$,
equipped with the product $\sigma$-algebra.

To define our random walker, recall Assumption (R) and let
\begin{equation}\label{e:defpbulletk}
p_\bullet(k) := \inf_{\ell \ge k} \alpha(\ell,x_\bullet), \quad k \in \Z_+.
\end{equation}
For each $k \in \Z_+$, fix a partition of $[0,1]$ into intervals $I^k_x$, $x \in \cS$ such that $|I^k_x| = \alpha(k, x)$
and $[0,p_\bullet(k)] \subset \bigcap_{\ell \ge k} I^\ell_{x_\bullet}$.
Finally, for $y \in \Z^d \times \Z$, we define $Y^y = (Y^y_\ell)_{\ell \in \Z_+}$ by
\begin{equation}\label{e:defY}
Y^y_0 =y, \qquad Y^y_{\ell+1} = Y^y_\ell + \sum_{x \in \cS} (x, 1) \mathbbm{1}_{\left\{ U_{Y^y_\ell} \in I^{N(Y^y_\ell)}_x \right\}}, \;\; \ell \ge 0.
\end{equation}
For $y = (x,t) \in \mathbb{Z}^d \times \mathbb{Z}$ we write $X^y_\ell$ for the projection of $Y^y_\ell$ into $\mathbb{Z}^d$, i.e., $Y^y_\ell = Y_\ell^{(x,t)} = (X^y_\ell, \ell+t)$.
When $y=0$ we omit it from the notation.
One may verify that the random walker $X = (X_\ell)_{\ell \in \Z_+}$ is indeed distributed as described in Section \ref{s:intro}.

We discuss next an important property of our random environment:
the FKG inequality (cf.\ e.g.\ \cite{Liggett85} Corollary 2.12 p.\ 78).
It states that monotone functions of $\omega$ are positively correlated.
This result will be used in the proof of Lemma~\ref{l:no_top},
which is an important ingredient to control the tail of the regeneration time constructed in Section~\ref{s:regmanyRWshighdim}.
We first need the following definition.

\begin{definition}\label{d:monotone_FKG}
A measurable function $f:\Omega \to \R$
is called \emph{non-decreasing} if
$f(\omega') \ge f(\omega)$ whenever $\omega', \omega \in \Omega$
satisfy $\omega'(B) \ge \omega(B)$ for all $B \in \cW$, and it is called 
\emph{non-increasing} if $-f$ is non-decreasing.
If either $f$ or $-f$ is non-decreasing, we say that $f$ is \emph{monotone}.
An event $A \in \sigma(\omega)$ is said to be non-decreasing, non-increasing or monotone if the corresponding property is satisfied by its indicator function $\mathbbm{1}_A$.
\end{definition}

The inequality reads as follows.
\begin{proposition}[FKG inequality]
\label{prop:FKG}
Let $f, g:\Omega \to \R$ be bounded measurable functions that are either both non-decreasing or both non-increasing. 
Then
\begin{equation}\label{e:FKG}
\E^\rho \left[f(\omega) g(\omega) \right] \ge \E^\rho \left[ f(\omega) \right] \E^\rho \left[ g(\omega) \right].
\end{equation}
\end{proposition}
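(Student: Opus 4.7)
The plan is to prove Proposition~\ref{prop:FKG} by reducing the FKG inequality for the Poisson point process $\omega$ to Harris' inequality for independent integer-valued random variables, via a discretization of the trajectory space $W$. Replacing $f,g$ by $-f,-g$ if necessary, one may assume both functions are bounded and non-decreasing.

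First I would construct a refining sequence of countable measurable partitions $\pi_n = (A^{(n)}_k)_{k \in \N}$ of $W$ with $\mu(A^{(n)}_k) < \infty$ for every $n,k$ and such that $\bigcup_{n \in \N} \sigma(\pi_n)$ generates $\cW$. This is possible because $\mu = \sum_{z \in \Z^d} P_z$ is $\sigma$-finite (the partition $(\{w \in W : w(0) = z\})_{z \in \Z^d}$ already consists of pieces of $\mu$-measure one) and because $\cW$ itself is generated by the countable family of coordinate evaluations. Setting $N^{(n)}_k := \omega(A^{(n)}_k)$, the defining property of the Poisson point process gives that $(N^{(n)}_k)_{k \in \N}$ are independent $\Poisson(\rho \mu(A^{(n)}_k))$ random variables; letting $\cF_n := \sigma(N^{(n)}_k : k \in \N)$, I then define
\[
f_n := \E^\rho[f(\omega) \mid \cF_n], \qquad g_n := \E^\rho[g(\omega) \mid \cF_n].
\]

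The main content of the proof, and its most delicate step, is to show that $f_n$ and $g_n$ are coordinate-wise non-decreasing as functions of the Poisson vector $(N^{(n)}_k)_{k \in \N}$. I would argue this by an explicit coupling: conditionally on $\{N^{(n)}_k = m_k,\ k \in \N\}$, the restriction of $\omega$ to $A^{(n)}_k$ has the law of $\sum_{i=1}^{m_k}\delta_{\xi_{k,i}}$ with $\xi_{k,i}$ i.i.d.\ of law $\mu|_{A^{(n)}_k}/\mu(A^{(n)}_k)$, independently across $k$. Incrementing a single count $m_{k_0}$ by one then amounts to appending one extra independent atom in $A^{(n)}_{k_0}$, producing a configuration that dominates the previous one in the sense of Definition~\ref{d:monotone_FKG}; applying the non-decreasing property of $f$ pointwise and then taking expectations yields the required monotonicity of $f_n$, and similarly of $g_n$.

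Once this is established, I would conclude by applying the Harris inequality to the collection $(N^{(n)}_k)_{k \in \N}$ of independent variables: for bounded coordinate-wise non-decreasing functions of independent random variables --- first for finite subfamilies by the classical induction on dimension based on Chebyshev's sum inequality, then for countable families by a further martingale approximation --- one has $\E^\rho[f_n g_n] \geq \E^\rho[f_n]\E^\rho[g_n]$. Since the $\pi_n$ are refining and generate $\cW$, the martingale convergence theorem yields $f_n \to f(\omega)$ and $g_n \to g(\omega)$ in $L^2(\P^\rho)$, and passing to the limit gives \eqref{e:FKG}.
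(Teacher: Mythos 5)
Your argument is correct and is essentially the same as the paper's: the paper simply cites Theorem~3.1 of \cite{T09}, and the proof there proceeds exactly by the discretization you describe --- refining countable partitions of $W$ into finite-$\mu$-measure cells, independence of the Poisson counts, Harris' inequality for independent integer-valued variables, and a bounded-martingale convergence argument to pass to the limit. You have reconstructed that proof faithfully, including the key monotonicity step (that $\E^\rho[f\mid\cF_n]$ is coordinate-wise non-decreasing in the cell counts), so there is nothing to flag.
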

\begin{proof}
One may follow the proof of Theorem~3.1 in \cite{T09}.
\end{proof}

We extend the notion of monotonicity to functions defined on $\overline{\Omega}$ as follows.
\begin{definition}\label{d:monotone}
A measurable function $f:\overline{\Omega} \to \R$
is called non-decreasing, non-increasing or monotone if, for all $U \in [0,1]^{\Z^d}$, 
the function $\omega \mapsto f(\omega, U)$ satisfies the same property in the sense of Definition~\ref{d:monotone_FKG},
and analogously for events in $\sigma(\omega, U)$.
\end{definition}

\begin{remark}
Note that monotone functions in the sense of Definition~\ref{d:monotone} are \emph{not} necessarily positively correlated under $\P^\rho$: consider e.g.\ the indicator functions of the events $\{U_0 \in (0,1/2)\}$ and $\{U_0 \in (1/2, 1)\}$.
However, such monotone functions are positively correlated under the conditional law given $U$, 
as can be deduced from Proposition~\ref{prop:FKG}.
\end{remark}

We give next a few other useful definitions.
For a measurable function $g: \overline{\Omega} \to E$ (with $E$ some measurable space),
we will abuse notation by writing $g$ to refer also to the random variable $g(\omega,U)$,
distributed according to the push-forward of $\P^\rho$. 
\begin{definition}
\label{d:supported}
We say that the function $g:\overline{\Omega} \to E$ is \emph{supported on the set} $K \subset \Z^d \times \Z$ if
\begin{equation}\label{e:defsupportedK}
g(\omega, U) \in \sigma(N(y), U_y \colon\, y \in K).
\end{equation}
\end{definition}

For $y=(x,n) \in \Z^d \times \Z$ and $w \in W$,
define the space-time translation $\theta_y w$ as
\begin{equation}\label{e:defthetaw}
\theta_y w(i) := w(i-n) + x, \;\;\; i \in \Z.
\end{equation}
For $A \subset W$, $\theta_y A$ is defined analogously, i.e., $\theta_y A := \bigcup_{w \in A} \{\theta_y w\}$.
We may then define space-time translations operating on $\overline{\Omega}$ as follows.
For $(\omega,U) \in \overline{\Omega}$, let
\begin{equation}\label{e:def_theta}
\begin{aligned}
\theta_y (\omega,U) & := (\theta_y \omega, \theta_y U) \\
\text{ where } \;\; (\theta_y \omega)(A) & := \omega(\theta_y A) \;\; \forall \; A \in \cW, \;\;\;\; (\theta_y U)_u := U_{y+u} \;\; \forall \; u \in \Z^d \times \Z.
\end{aligned}
\end{equation}
The translations of a measurable function $g:\overline{\Omega} \to E$ 
are then defined by setting
\begin{equation}
\label{e:def_gy}
g_y = \theta_y g := g \circ \theta_y.
\end{equation}
Note that $\theta_y N(u) = N(y+u)$, and that the law of $(\omega,U)$ is invariant with respect to the space-time translations, i.e., $\theta_y (\omega,U)$ is distributed as $(\omega,U)$ under $\P^\rho$ for any $y \in \Z^d \times \Z$.
In particular, the law of $Y^y-y$ in \eqref{e:defY} does not depend on $y$ since $Y^y = y + \theta_y Y$.


\section{Renormalization}
\label{s:renormalization}
In this section, we develop an important tool in the analysis of our model,
namely, a multi-scale renormalization scheme.
We will keep the setup reasonably general so that it may be used in future applications.
An important consequence of the technique developed here
is the ballisticity of the random walker (cf.\ Theorem~\ref{thm:ballisticity}),
which is an essential ingredient for proving Theorem~\ref{thm:main}.
All constants in this section  will be \emph{independent} of $\rho$, but may depend on other parameters of the model.

\subsection{General procedure}
To describe the renormalization procedure, we introduce the sequence of scales
\begin{equation}
  \label{e:Lk}
  L_0 = 10^{50} \qquad \text{and} \qquad L_{k + 1} = \lfloor L_k^{1/2} \rfloor L_k, \text{ for $k \geq 0$}.
\end{equation}
The choice of constants $10^{50}$ and $1/2$ appearing above is not crucial;
many other choices would have been equally good for our purposes.
Note that
\begin{equation}
\label{e:lowerbound_Lk}
  L_k^{1/2} \geq \lfloor L_k^{1/2} \rfloor \geq \tfrac{1}{2} L_k^{1/2} \;\;\;\; \text{ for all $k\ge 0$}.
\end{equation}
Fix $\mathfrak{R} \in \N$. In the relevant applications, $\mathfrak{R}$ will be taken as in Assumption (S).
\todo{do we ever use $\mathfrak{R}$ for something else?}
Given a scale $k$, we will consider translations of the space-time boxes
\begin{equation}
  \label{e:Bk}
  B_{k,0} = \left\{ [-2 \mathfrak{R} L_k, 3 \mathfrak{R} L_k)^d \times [0, L_k) \right\} \cap (\Z^d \times \Z).
\end{equation}
More precisely, let
\begin{equation}
  M_k = \{k\} \times (\Z^d \times \Z), \;\;\; k \geq 0,
\end{equation}
be the set of indices of scale $k$ and, for $\hat{k} \ge 0$, let
\begin{equation}
M_{\geq \hat{k}} = \bigcup_{k \geq \hat{k}} M_k
\end{equation}
be the set of indices of all scales greater or equal to $\hat{k}$.
For $m = (k,y) \in M_k$, we define the corresponding translation of the box $B_{k,0}$
\begin{equation}
  \label{e:Bm}
  B_m = B_{k,0} + L_k y.
\end{equation}
The base of the box $B_{k,0}$ is given by the set $I_{k,0} = ([0,\mathfrak{R} L_k)^d \times \{0\}) \cap (\Z^d \times \Z)$
and its corresponding translations are
\begin{equation}
  \label{e:Im}
  I_m = I_{k,0} + L_k y, \text{ for $m = (k, y) \in M_k$},
\end{equation}
see Figure~\ref{f:Bm}.
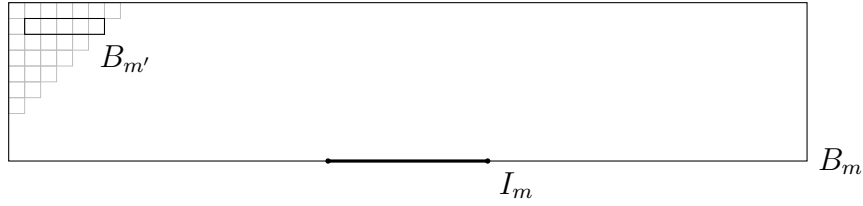
\begin{figure}[h]
  \centering
  \begin{tikzpicture}[scale=.7]
    \draw[very thick] (6,0) -- (9, 0); \node at (9, 0) [below right] {$I_m$};
    \fill (6,0) circle (0.05); \fill (9,0) circle (0.05);
    \foreach \x in {0,...,6}
    {   \foreach \y in {0,...,\x} {
        \draw[color=gray!50, very thin] (1.8 - .3 * \x, 3 - .3 * \y) rectangle (2.1 - .3 * \x, 2.7 - .3 * \y);
      }
    }
    \node at (1.5, 2.4) [below right] {$B_{m'}$};
    \draw (0.3, 2.4) rectangle (1.8, 2.7);
    \draw (0,0) rectangle (15, 3); \node at (15, 0) [right] {$B_m$};
  \end{tikzpicture}
  \caption{The box $B_m$ for some $m \in M_{k+1}$. At the bottom we picture the corresponding base $I_m$ and, in the upper left corner, a box $B_{m'}$ in the previous scale $k$.}
  \label{f:Bm}
\end{figure}

Having this in place, we introduce the following definition.
\begin{definition}
  \label{d:adapted}
  Fix $\hat{k} \geq 0$ and a collection of events $(A_m)_{m \in M_{\geq \hat{k}}}$.
  We say that this collection is \emph{adapted} if the indicator function of $A_m$
  is supported in $B_m$ (as in Definition~\ref{d:supported})
  for each $m \in M_{\geq \hat{k}}$.
 \end{definition}

We aim to bound the probability of certain events $A_m$ inductively in $k$.
For this, we will need another definition, concerning the occurrence of $A_m$ in consecutive scales.
\begin{definition}
Fix $\hat{k} \geq 0$ and a collection of events $(A_m)_{m \in M_{\geq \hat{k}}}$.
This collection is said to be \emph{cascading} if, for every $k \geq \hat{k}$ and $m \in M_{k + 1}$, we have
\begin{equation}\label{e:cascading}
A_m \subseteq \bigcup_{m_1 \underset{m}\leftrightarrow m_2} A_{m_1} \cap A_{m_2},
\end{equation}
where $m_1 \leftrightarrow_m m_2$ stands for pairs of  indices $m_1, m_2 \in M_k$ such that $B_{m_1}, B_{m_2} \subseteq B_{m}$ and such that the vertical distance between the boxes $B_{m_1}$, $B_{m_2}$ is at least $L_k$.
\end{definition}
\noindent
In the definition above, if $m_1 = (k,y_1)$ and $m_2=(k,y_2)$ with $y_1 = (x_1,t), \; y_2 = (x_2, s) \in \mathbb{Z}^{d} \times \mathbb{Z} $ we say that the vertical distance between the boxes $B_{m_1}$ and $B_{m_2}$ is equal to $[(|t- s|-1) L_k + 1 ]^+$.

Intuitively speaking, the above definition says that the occurrence of $A_m$ implies that two similar events happened in well-separated boxes of the smaller scale.
The imposition that the boxes indexed by $m_1$ and $m_2$ in \eqref{e:cascading} are vertically separated
will be useful to decouple the events $A_{m_1}$ and $A_{m_2}$ via Theorem~\ref{t:decouple}.
Examples of cascading events will be given in Section~\ref{ss:cascading}.

Given a family $(A_m)_{m \in M_{\geq \hat{k}}}$, we will be interested in the following quantities:
\begin{equation}\label{e:pk}
p_k(\rho) = \sup_{m \in M_k} \mathbb{P}^{\rho} (A_m), \quad k \ge \hat{k}.
\end{equation}
Let us also denote
\begin{equation}\label{e:defiota}
\iota_{\hat{k}} := \exp \bigg\{ 2 \sum_{k \ge \hat{k}} L_k^{-1/16} \bigg\} \in (0,\infty).
\end{equation}
The next theorem is the main result that we will use in order to bound $p_k(\rho)$.
\begin{theorem}\label{t:pk_decay}
For any $\gamma \in (1,3/2]$, there exists $k_o = k_o(d,\gamma) \in \Z_+$ such that the following holds.
Fix $\hat{k} \geq k_o$ and a collection $(A_m)_{m \in M_{\geq \hat{k}}}$ that is adapted and cascading.
Assume that the $A_m$'s are either all non-increasing or all non-decreasing and that, for some $\hat{\rho} \ge L^{-1/16}_{\hat{k}}$,
\begin{equation}\label{e:kprime_rhoprime}
p_{\hat{k}}(\hat{\rho}) \leq \exp\{-(\log L_{\hat{k}})^{\gamma}\}.
\end{equation}
Then, writing $\rho_* := \iota_{\hat{k}} \hat{\rho}$ and $\rho_{**} := \iota_{\hat{k}}^{-1} \hat{\rho}$, for all $k \ge \hat{k}$ we have
\begin{equation}\label{e:pk_decay}
p_k (\rho) \leq \exp\{-(\log L_{k})^{\gamma}\}
\;\;\;\left\{
\begin{array}{ll}
 \forall\, \rho\ge \rho_* & \text{ in the non-increasing case,}\\
 \forall\, \rho \le \rho_{**} & \text{ in the non-decreasing case.}
\end{array}
\right.
\end{equation}
\end{theorem}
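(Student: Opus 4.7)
The plan is to prove \eqref{e:pk_decay} by induction on $k \ge \hat{k}$, using the cascading property to reduce the estimate at scale $k+1$ to a sum of pair probabilities at scale $k$, and then the decoupling result (Theorem~\ref{t:decouple}) to (almost) factorize each pair probability, paying a small cost in the density parameter $\rho$.

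\textbf{Induction setup.} I will work in the non-increasing case; the non-decreasing case is symmetric. Define a sequence of densities
\[
\rho_{\hat{k}} := \hat{\rho}, \qquad \rho_{k+1} := \rho_k \, \exp\!\bigl(2 L_k^{-1/16}\bigr),
\]
so that $\rho_k \nearrow \iota_{\hat{k}} \hat{\rho} = \rho_*$ by \eqref{e:defiota}. Monotonicity of $A_m$ and the fact that $\P^\rho$ is stochastically monotone in $\rho$ imply $p_k(\rho) \le p_k(\rho_k)$ for every $\rho \ge \rho_*$, so it suffices to prove $p_k(\rho_k) \le \exp\{-(\log L_k)^\gamma\}$ by induction, the base case $k=\hat{k}$ being \eqref{e:kprime_rhoprime}.

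\textbf{Induction step.} Fix $m \in M_{k+1}$. By the cascading property,
\[
\P^{\rho_{k+1}}(A_m) \;\le\; \sum_{m_1 \leftrightarrow_m m_2} \P^{\rho_{k+1}}(A_{m_1}\cap A_{m_2}).
\]
The number of admissible pairs $(m_1,m_2)$ with $B_{m_1},B_{m_2}\subseteq B_m$ is at most $C L_k^{2(d+1)/2 \cdot 2} = C \lfloor L_k^{1/2}\rfloor^{2(d+1)}$, a fixed polynomial in $L_k$. The two boxes have vertical separation $\ge L_k$; since the $A_{m_i}$ are adapted and non-increasing, Theorem~\ref{t:decouple} should give
\[
\P^{\rho_{k+1}}(A_{m_1}\cap A_{m_2}) \;\le\; \P^{\rho_k}(A_{m_1})\,\P^{\rho_k}(A_{m_2}) + \varepsilon_k
\;\le\; p_k(\rho_k)^2 + \varepsilon_k,
\]
where the density is relaxed from $\rho_{k+1}$ down to $\rho_k$ (good for non-increasing events) and $\varepsilon_k$ is a super-polynomially small error coming from the decoupling (of the form $\exp\{-c L_k^{1/4}\}$ or similar). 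Combining,
\[
p_{k+1}(\rho_{k+1}) \;\le\; C L_k^{2(d+1)}\bigl(p_k(\rho_k)^2 + \varepsilon_k\bigr).
\]

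\textbf{Closing the induction.} Using the inductive hypothesis $p_k(\rho_k)\le\exp\{-(\log L_k)^\gamma\}$, the estimate reduces to checking
\[
C L_k^{2(d+1)}\bigl(\exp\{-2(\log L_k)^\gamma\}+\varepsilon_k\bigr) \;\le\; \exp\{-(\log L_{k+1})^\gamma\}.
\]
Since $L_{k+1}\le L_k^{3/2}$ we have $(\log L_{k+1})^\gamma \le (3/2)^\gamma (\log L_k)^\gamma$, and for $\gamma\le 3/2$ one has $(3/2)^\gamma < 2$, leaving a gap of order $(\log L_k)^\gamma$ to absorb the polynomial factor $C L_k^{2(d+1)}$ and the error $\varepsilon_k$. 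This gap is positive for all $k\ge k_o(d,\gamma)$ large enough, hence the choice of $k_o$.

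\textbf{Main obstacle.} The delicate point is the bookkeeping on the density parameter: the decoupling step costs a multiplicative factor $e^{2 L_k^{-1/16}}$ on $\rho$ at each scale, and the whole scheme only works because $\sum_{k\ge\hat k}L_k^{-1/16}<\infty$ (the $1/16$ matching the tail estimate in Theorem~\ref{t:decouple}), ensuring the telescoping product remains bounded by the finite constant $\iota_{\hat k}$. Alongside this, one must verify the arithmetic $(3/2)^{3/2}<2$ and that the error $\varepsilon_k$ in the decoupling is indeed dominated by $\exp\{-2(\log L_k)^\gamma\}$ for $\gamma\le 3/2$; it is this latter constraint that explains the upper bound on $\gamma$ in the statement.
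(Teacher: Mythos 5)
Your proposal is correct and follows the same route as the paper: induction on $k$ via the cascading property and Theorem~\ref{t:decouple}, with an auxiliary density sequence $\rho_k$ increasing towards $\rho_*$, and the recursion is closed using the polynomial pair count, $(3/2)^\gamma < 2$, and a stretched-exponentially small decoupling error. One small misattribution at the end: the restriction $\gamma \le 3/2$ comes entirely from the arithmetic $(3/2)^\gamma < 2$ (which compares $\log L_{k+1} \le \tfrac32 \log L_k$ against the squaring of $p_k$), whereas the decoupling error $\exp\{-c_o \rho L_k^{1/8}\}$ is a stretched exponential in $L_k$ and dominates $\exp\{-2(\log L_k)^\gamma\}$ for \emph{every} $\gamma$, so it imposes no constraint on $\gamma$; also note that in the non-decreasing case the ``symmetry'' hides the extra check, made explicitly in the paper, that the decreasing $\rho_k$ stays $\geq L_k^{-1/16}$ so this error term remains controlled.
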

\noindent
The upper bound $3/2$ appearing in Theorem~\ref{t:pk_decay} is not sharp;
any number $\beta$ satisfying $(3/2)^\beta < 2$ would suffice (see \eqref{e:defk_o2} below).

The statement of the previous theorem has two different cases,
depending on whether the events $A_m$ are non-increasing or non-decreasing.
All applications considered in this paper concern non-increasing events,
but we choose to keep the exposition general in order to be able to use our results in the accompanying paper \cite{BHST16b}.

One of the main ingredients for the proof of Theorem \ref{t:pk_decay}
is a recursion inequality for $p_k$, cf.\ Lemma~\ref{l:induction} below.
As the cascading property suggests,
the key to obtain such a recursion is to decouple pairs of events $A_{m_1}$ and $A_{m_2}$ supported in boxes that are well-separated in time.
Recall however that the environment of simple random walks, being conservative, presents poor mixing properties,
which makes decoupling hard.
We overcome this difficulty using a ``sprinkling technique'',
which consists in performing a change in the density of particles in the environment in order to blur the dependency between such events.
Thus, up to an error term, we bound $\P^{\rhoprime}(A_{m_1} \cap A_{m_2})$ by the product $\P^{\rho}(A_{m_1}) \P^{\rho}(A_{m_2})$, where $\rho$ is slightly different from $\rhoprime$.
This is the content of Theorem~\ref{t:decouple} below,
which has different statements for the cases where the events $A_m$ are non-increasing or non-decreasing.
\begin{theorem}
\label{t:decouple}
There exist constants $n_o \in \N$, $C_o \ge 1$ and $c_o > 0$, depending only on $d$ and on the law of $S^{0,1}$,
such that the following holds.
Let $B = ([a, b]^d \times [n, n']) \cap \Z^d \times \Z$ be a space-time box satisfying $n \geq n_o$,
and let $D = \Z^d \times \Z_-$ be the space-time
lower half-space.
Let $f_1, f_2 \colon\, \overline{\Omega} \to [0,1]$
be measurable functions supported respectively in $D$ and $B$ (cf.\ Definition~\ref{d:supported}).
Denote by $\textnormal{diam}(B)$ the diameter of $B$.
Then, for all $\rho > 0$:
\begin{enumerate}
\item[(a)] If both $f_1$ and $f_2$ are non-increasing (cf.\ Definition~\ref{d:monotone}), then
\begin{equation}
\label{e:dec_NI}
\mathbb{E}^{\rho(1+n^{-1/16})}[f_1 f_2] \leq \mathbb{E}^{\rho(1+n^{-1/16})}[f_1] \,\,
\mathbb{E}^{\rho}[f_2] + C_o \big(\textnormal{diam}(B) + n\big)^d \,e^{-2 c_o \rho n^{1/8}}.
\end{equation}

\item[(b)] If both $f_1$ and $f_2$ are non-decreasing (cf.\ Definition~\ref{d:monotone}), then
\begin{equation}
\label{e:dec_ND}
\mathbb{E}^{\rho}[f_1 f_2] \leq \mathbb{E}^{\rho}[f_1] \,\,
\mathbb{E}^{\rho(1+n^{-1/16})}[f_2] + C_o \big(\textnormal{diam}(B) + n\big)^d \,e^{-2 c_o \rho n^{1/8}}.
\end{equation}
\end{enumerate}
\end{theorem}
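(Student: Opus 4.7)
My plan is to prove the decoupling inequalities \eqref{e:dec_NI} and \eqref{e:dec_ND} by combining three ingredients: the Markov property at an intermediate time $t_m$, a spatial localization of $f_2$, and sprinkling with the extra density $\rho n^{-1/16}$. I will describe the non-increasing case (a); part (b) follows by the same scheme with all monotone inequalities reversed. Set $t_m := \lceil n/2 \rceil$. By the invariance of the Poisson point process $\omega$ of trajectories under time shifts, the field $(N(x, t_m))_{x \in \Z^d}$ is iid Poisson($\rho$) and, conditionally on it, the walks in $\omega$ can be realised as, at each $x$, $N(x, t_m)$ independent two-sided simple random walks emanating from $(x, t_m)$. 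Since $D \subset \Z^d \times (-\infty, t_m]$ and $B \subset \Z^d \times [t_m, \infty)$, $f_1$ and $f_2$ become conditionally independent given $(N(\cdot, t_m), U)$. It therefore suffices to control $\mathbb{E}^{\rho(1+n^{-1/16})}[g_1 g_2]$, where $g_i := \mathbb{E}[f_i \mid N(\cdot, t_m), U]$ inherits non-increasing monotonicity in $N(\cdot, t_m)$ from $f_i$.

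Next I would localize $g_2$ in space. Let $\pi B$ denote the spatial projection of $B$ and take $K := \pi B + [-r, r]^d$ with $r$ of order $n^{9/16}$ (adjusted by $\mathrm{diam}(B)$ when the time extent of $B$ is large). By the Gaussian tail for simple random walk, combined with time reversal and summation over $y \in B$ and times in $[n, n']$, the expected number of walks whose position at $t_m$ lies outside $K$ but which nevertheless visit $B$ during $[n, n']$ is bounded by $C_o (\mathrm{diam}(B) + n)^d e^{-2 c_o \rho n^{1/8}}$. Calling $\cE^c$ this ``bridging'' event and using $f_1 f_2 \le 1$ on it produces the additive error in \eqref{e:dec_NI}; on the complement $\cE$, $g_2$ depends on $N(\cdot, t_m)$ only through its restriction $N(\cdot, t_m)|_K$.

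Finally I would sprinkle. I split the $\rho(1+n^{-1/16})$-intensity Poisson field at time $t_m$ into four independent Poisson sub-fields indexed by near/far (position at $t_m$ in $K$ or not) and base/extra density ($\rho$ or $\rho n^{-1/16}$). Applying non-increasing monotonicity to $g_2$ on $\cE$, I upper-bound it by $g_2$ evaluated at the near-base sub-field alone; applying non-increasing monotonicity to $g_1$, I remove the near-base sub-field from its argument so that the result becomes independent of $g_2$. The ensuing product factorizes exactly over independent Poisson sub-fields, and after comparing the inhomogeneous densities on each factor to the desired homogeneous ones (again via monotonicity and the sprinkling budget), one obtains the target bound $\mathbb{E}^{\rho(1+n^{-1/16})}[f_1] \, \mathbb{E}^\rho[f_2]$, up to the error from $\cE^c$.

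The main obstacle will be the bookkeeping in this sprinkling step: the FKG inequality (Proposition~\ref{prop:FKG}) gives positive correlation for non-increasing observables, which is the opposite direction to what is needed for an upper bound. The extra density $\rho n^{-1/16}$ must be consumed precisely to cancel this correlation on $K$ and to produce the asymmetric densities $\rho(1+n^{-1/16})$ and $\rho$ on the two factors rather than a common density (which would be too weak). Making the chain of pointwise monotone inequalities respect the required directions, while maintaining independence between the remaining random variables and ensuring the right-hand factor ends up at density $\rho$ rather than $\rho(1+n^{-1/16})$, is where the most careful setup is required.
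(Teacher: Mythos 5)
Your strategy diverges substantially from the paper's, and it contains a gap that you yourself flag but do not close.

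The paper's proof (following Theorem~C.1 of \cite{HHSST14}) does \emph{not} condition at an intermediate time slice. It conditions on the $\sigma$-algebra generated by all trajectories visiting $D$ (so $f_1$ is exactly measurable, with the correct density already built in), treats the resulting time-$0$ positions of those walks as an arbitrary but, with high probability, $\rho$-sparse point cloud (Definition~\ref{d:balanced}), and then uses the \emph{soft local time} coupling (Lemma~\ref{l:couplesystem}, fed by the heat-kernel integration Lemma~\ref{l:integration} and the coupling Lemma~\ref{l:couple}) to dominate the positions at time $n$, restricted to a finite window around $B$, by a fresh Poisson field of sprinkled intensity. That coupling is the engine of the proof; there is no four-way Poisson split and no appeal to FKG to cancel correlations.

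The gap in your proposal is precisely where you anticipate ``bookkeeping'' trouble. After removing the near-base sub-field from the argument of $g_1$, the quantity $\mathbb{E}\big[g_1(\text{full}\setminus\text{near-base})\big]$ is an expectation under an \emph{inhomogeneous} intensity: $\rho\,n^{-1/16}$ on $K$ and $\rho(1+n^{-1/16})$ off $K$. To close the argument you need
\begin{equation*}
\mathbb{E}\big[g_1(\text{full}\setminus\text{near-base})\big] \;\le\; \mathbb{E}^{\rho(1+n^{-1/16})}[f_1],
\end{equation*}
but since $g_1$ is non-increasing and the inhomogeneous intensity has \emph{fewer} particles on $K$ than the homogeneous $\rho(1+n^{-1/16})$, the inequality actually runs the other way: removing particles can only raise $\mathbb{E}[g_1]$. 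So the chain $\mathbb{E}^{\rho'}[f_1f_2]\le\ldots\le\mathbb{E}^{\rho'}[f_1]\,\mathbb{E}^{\rho}[f_2]$ does not assemble. This is not bookkeeping that more care would fix: $g_1$ genuinely depends on $N(\cdot,t_m)|_K$ (the region $K$ has radius $\asymp n^{9/16}$, well within range of walks coming from $D$ over a time horizon of order $n$), and you have no mechanism to argue that $g_1$ is approximately insensitive to the near-base particles. The paper sidesteps this entirely: in its conditioning, $f_1$ is $\mathcal{F}_D$-measurable and therefore already computed at the correct density; all the work goes into coupling the forward evolution seen by $f_2$. If you want to pursue the time-$t_m$ route, you would need an additional quantitative estimate controlling the influence of $N(\cdot,t_m)|_K$ on $g_1$, which would likely amount to re-deriving a coupling of the soft-local-time type anyway.
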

\noindent
The proof of Theorem~\ref{t:decouple} is given in the Appendix~\ref{s:decouple},
and is very similar to the proof of Theorem~C.1 in \cite{HHSST14}.

We may now identify the constant $k_o$ appearing in Theorem~\ref{t:pk_decay}.
Fix $\gamma \in (1,3/2]$ and let $n_o, C_o, c_o$ as given by Theorem~\ref{t:decouple}.
Then fix $k_o = k_o(d,\gamma) \in \Z_+$ such that
\begin{equation}\label{e:defk_o1}
L_{k_o} \ge n_o
\end{equation}
and
\begin{equation}\label{e:defk_o2}
C_o L_k^{2d+1} \left( e^{-(2-(3/2)^{3/2}) (\log L_k)^{\gamma}} + e^{-c_o L_k^{1/16} + (3/2)^{3/2} (\log L_k)^{3/2}}\right) < 1 \;\;\;\forall\; k \ge k_o.
\end{equation}

As anticipated, Theorem~\ref{t:decouple} leads to the following recursion inequality for $p_k$.
\begin{lemma} \label{l:induction}
Fix $\gamma \in (1,3/2]$ and let $k_o \in \Z_+$ as in \eqref{e:defk_o1}--\eqref{e:defk_o2}.
Fix $\hat{k} \geq k_o$ and a collection $(A_m)_{m \in M_{\geq \hat{k}}}$ that is adapted and cascading.
Assume that the $A_m$'s are either all non-increasing or all non-decreasing.
For fixed $\rho >0$ and $k \ge \hat{k}$, define
\begin{equation}\label{e:barrho}
\rhoprime = \left\{
\begin{array}{ll}
\rho (1 + L^{-1/16}_k) & \text{ in the non-increasing case,}\\
\rho (1 - L^{-1/16}_k) & \text{ in the non-decreasing case.}
\end{array}
\right.
\end{equation}
Then we have
\begin{equation}\label{e:induction}
p_{k+1}(\rhoprime) \leq C_o L_k^{2d+1} \big( p_k(\rho)^2 + \exp\{-c_o \rho L_k^{1/8}\} \big),
\end{equation}
where $C_o, c_o$ are as in Theorem~\ref{t:decouple}.
\end{lemma}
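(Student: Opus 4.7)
The plan is to fix $m \in M_{k+1}$, use the cascading property together with a union bound to reduce matters to controlling $\mathbb{P}^{\bar{\rho}}(A_{m_1} \cap A_{m_2})$ for pairs $m_1 \leftrightarrow_m m_2$, and then invoke Theorem~\ref{t:decouple} combined with the monotonicity hypothesis on the $A_m$. Concretely,
\[
\mathbb{P}^{\bar{\rho}}(A_m) \;\leq\; \sum_{m_1 \leftrightarrow_m m_2} \mathbb{P}^{\bar{\rho}}(A_{m_1} \cap A_{m_2}),
\]
and a direct count shows that there are at most $c L_k^{d+1}$ admissible ordered pairs: each scale-$k$ box fitting inside $B_m$ admits at most $c L_k^{(d+1)/2}$ positions, namely $\lfloor L_k^{1/2} \rfloor$ translations in each of the $d$ spatial directions and in time.

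For each such pair I exploit the time-stationarity of $\mathbb{P}^{\bar{\rho}}$: assuming without loss of generality that $B_{m_1}$ lies below $B_{m_2}$, I shift time so that the top of $B_{m_1}$ sits at $0$. After this shift, $\mathbbm{1}_{A_{m_1}}$ is supported in the lower half-space $D = \mathbb{Z}^d \times \mathbb{Z}_-$, while $\mathbbm{1}_{A_{m_2}}$ is supported in a space-time box $B$ whose starting time $n$ satisfies $n \geq L_k$, thanks to the vertical-separation clause built into the relation $m_1 \leftrightarrow_m m_2$. Since $L_k \geq L_{k_o} \geq n_o$ by \eqref{e:defk_o1}, Theorem~\ref{t:decouple} is applicable, and monotonicity of the indicators is preserved by such shifts.

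The main subtlety is then synchronising the ``sprinkling factor'' $(1+n^{-1/16})$ appearing in Theorem~\ref{t:decouple} with the shift $(1 \pm L_k^{-1/16})$ defining $\bar{\rho}$. In the non-increasing case I apply Theorem~\ref{t:decouple}(a) at the auxiliary density $\rho^\flat := \bar{\rho}/(1+n^{-1/16})$, so that $\rho^\flat(1+n^{-1/16}) = \bar{\rho}$; the bound $n \geq L_k$ forces $\rho^\flat \geq \rho$, and non-increasingness yields $\mathbb{P}^{\bar{\rho}}(A_{m_1})\,\mathbb{P}^{\rho^\flat}(A_{m_2}) \leq p_k(\rho)^2$. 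In the non-decreasing case I apply Theorem~\ref{t:decouple}(b) at density $\bar{\rho}$ directly and use
\[
\bar{\rho}(1+n^{-1/16}) \;\leq\; \rho(1-L_k^{-1/16})(1+L_k^{-1/16}) \;=\; \rho(1-L_k^{-1/8}) \;\leq\; \rho,
\]
so that non-decreasingness again bounds the product by $p_k(\rho)^2$. In both cases the resulting per-pair estimate reads $\mathbb{P}^{\bar{\rho}}(A_{m_1} \cap A_{m_2}) \leq p_k(\rho)^2 + C_o (\textnormal{diam}(B)+n)^d e^{-c_o \rho n^{1/8}}$, where I absorb a factor of $2$ in the exponent using $\bar{\rho} \geq \rho/2$, which holds for $k \geq k_o$.

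To conclude, I sum over the $\leq c L_k^{d+1}$ admissible pairs and group them by their time-gap $n = j L_k$, $j \geq 1$; the polynomial factor $(\textnormal{diam}(B)+n)^d$ is easily absorbed into $e^{-c_o \rho n^{1/8}}$ (the leading contribution comes from $j = 1$), producing a total error of order $L_k^{2d+1} e^{-c_o \rho L_k^{1/8}}$ and hence the stated recursion. The hardest step is really the density synchronisation carried out above; once the sprinkling is aligned correctly with the cascading separation requirement $n \geq L_k$, the rest reduces to bookkeeping over scales and absorbing polynomial prefactors into constants.
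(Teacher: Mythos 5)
Your proposal is correct and follows essentially the same route as the paper: the union bound over $m_1 \leftrightarrow_m m_2$, a count of $\lfloor L_k^{1/2}\rfloor^{2(d+1)} \le L_k^{d+1}$ pairs, a time-shift so that $B_{m_1}$ lies in the lower half-space while $B_{m_2}$ starts at some $n \ge L_k \ge n_o$, and an application of Theorem~\ref{t:decouple} followed by monotonicity to replace both factors by $p_k(\rho)$. The one place where you add detail the paper glosses over is the explicit introduction of the auxiliary density $\rho^\flat = \bar{\rho}/(1+n^{-1/16})$ to reconcile the $n$-dependent sprinkling in Theorem~\ref{t:decouple} with the fixed sprinkling $(1 \pm L_k^{-1/16})$ in \eqref{e:barrho} when $n > L_k$; this is a legitimate clarification of a step the paper compresses into a single displayed inequality.
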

\begin{proof}
We start with the case when the $A_m$'s are all non-increasing.
Using that the $A_m$'s are adapted and cascading and that, by \eqref{e:defk_o1}, $L_k \ge n_o$,
we apply Theorem~\ref{t:decouple} to the indicator functions of $A_{m_1}$, $A_{m_2}$
to obtain
\begin{equation}
\begin{split}
p_{k+1} & (\rhoprime) \;\; = \;\; \sup_{m \in M_{k+1}} \P^{\rhoprime} (A_m) \leq \sup_{m \in M_{k+1}} \sum_{m_1 \underset{m}\leftrightarrow m_2} \P^{\rhoprime} (A_{m_1} \cap A_{m_2})\\
      & \begin{array}{e}
        & \overset{\eqref{e:Lk}}\leq \;\; & \lfloor L_k^{1/2} \rfloor^{2(d+1)} \sup_{m \in M_{k+1}} \sup_{\smash{m_1 \underset{m}\leftrightarrow m_2}} \P^{\rhoprime} (A_{m_1} \cap A_{m_2})\\
        & \overset{\text{Theorem~\ref{t:decouple}}}\leq & L_k^{d+1} \sup_{m_1, m_2 \in M_{k}}  \big( \P^{\rhoprime} (A_{m_1}) \P^{\rho} (A_{m_2}) + C_o L_k^d \exp\{-2 c_o \rho L_k^{1/8}\} \big)\\[2mm]
        & \overset{\text{$A_m$'s non-increas.}}\leq & C_o L_k^{2 d+1} \big(p_k(\rho)^2 + \exp\{-2 c_o \rho L_k^{1/8}\}\big),
      \end{array}
    \end{split}
  \end{equation}
This finishes the proof of \eqref{e:induction} in the first case.

Now assume that the events $A_m$ are all non-decreasing.
As before, we can estimate
\begin{align}
p_{k+1}(\rhoprime)
& \quad \;\;= \sup_{m \in M_{k+1}} \P^{\rhoprime} (A_m) \leq \sup_{m \in M_{k+1}} \sum_{m_1 \underset{m}\leftrightarrow m_2} \P^{\rhoprime} (A_{m_1} \cap A_{m_2})\nonumber\\
& \leq C_o L_k^{2 d+1} \Big( p_{k}(\rho)^2 + \exp\{ -2 c_o \rhoprime L_k^{1/8}\} \Big).
\end{align}
Since, by the definition of $L_0$, $\bar{\rho} \ge \rho/2$, \eqref{e:induction} follows.
\end{proof}

Now that we know how large the sprinkling should be as we move from scale $k+1$ to $k$
in order to obtain a good recursive inequality for the $p_k$'s, we will introduce a sequence of densities $\rho_k$.

Given $\rho_{\hat{k}} > 0$, define $\rho_k$ for $k \geq \hat{k}$ recursively by setting
\begin{equation}\label{e:rhok}
\rho_{k+1} = \left\{
\begin{array}{ll}
\rho_k (1 + L_{k}^{-1/16}) & \text{ when the $A_m$'s are all non-increasing,}\\
\rho_k (1 - L_{k}^{-1/16}) & \text{ when the $A_m$'s are all non-decreasing.}
\end{array}
\right.
\end{equation}

Note that, with the above definition, when the $A_m$'s are non-increasing,
\begin{equation}\label{e:rho_star}
\log \rho_k = \log \rho_{\hat{k}} + \sum_{i=\hat{k}}^k \log(1 + L_i^{-1/16}) \leq \log \rho_{\hat{k}} +  \sum_{i=\hat{k}}^\infty L_i^{-1/16} < \log (\iota_{\hat{k}} \rho_{\hat{k}}) < \infty
\end{equation}
while, when the $A_m$'s are non-decreasing, since $e^{-2x} \leq 1 - x$ for all $x \in (0, L_0^{-1/16})$,
\begin{equation}\label{e:rho_star_star}
\log \rho_k \geq \log \rho_{\hat{k}} - 2 \sum_{i=\hat{k}}^\infty L_i^{-1/16} = \log ( \iota_{\hat{k}}^{-1} \rho_{\hat{k}}) > -\infty.
\end{equation}
This shows that the sequence of densities $\rho_k$ is not asymptotically trivial.

We are now in position to prove Theorem~\ref{t:pk_decay}.
\begin{proof}[Proof of Theorem~\ref{t:pk_decay}]
Given $\gamma \in (1,3/2]$, take $k_o$ as in \eqref{e:defk_o1}--\eqref{e:defk_o2}.
The first step in the proof is to show how one can use \eqref{e:induction}
to transport the bound $p_k(\rho_k) \leq \exp\{-(\log L_{k})^{\gamma}\}$ to the scale $k+1$.
This can be summarized by saying that:
\begin{display}
\label{e:assume_induction}
if \eqref{e:induction} holds with $\rho = \rho_k \geq L_k^{-1/16}$, then\\
$p_k(\rho_k) \leq \exp\{-(\log L_k)^{\gamma}\}$ implies $p_{k+1}(\rho_{k+1}) \leq \exp\{-(\log L_{k+1})^{\gamma}\}$.
\end{display}
To see why this is true, let us first use \eqref{e:induction} in order to estimate
\begin{align}\label{e:conseq_induc0}
\frac{p_{k+1}(\rho_{k+1})}{\exp\{-(\log L_{k+1})^{\gamma}\}}
\leq C_o L_k^{2d+1} \left( e^{-2 (\log L_k)^{\gamma}} + e^{-c_o \rho_k L_k^{1/8} } \right) \exp\{(\log L_{k+1})^{\gamma}\}.
\end{align}
Now, since $\rho_k \ge L_k^{-1/16}$ and by \eqref{e:Lk},
\eqref{e:conseq_induc0} is at most
\begin{equation}\label{e:conseq_induc}
C_o L_k^{2d+1} \left( e^{-(2 - (3/2)^{3/2}) (\log L_k)^{\gamma} } + e^{-c_o L_k^{1/16} + (3/2)^{3/2} (\log L_k )^{3/2} } \right)
\end{equation}
which is smaller than $1$ by \eqref{e:defk_o2}, proving \eqref{e:assume_induction}.

Let us now see how \eqref{e:pk_decay} follows from \eqref{e:assume_induction} and Lemma~\ref{l:induction}.
Let $\rho_{\hat{k}} = \hat{\rho}$ (from \eqref{e:kprime_rhoprime})
and define $\rho_k$ for $k \geq \hat{k}$ through \eqref{e:rhok}.
We claim that, for all $k \ge \hat{k}$,
\begin{equation}\label{e:ineqwithk}
p_k(\rho_k) \le \exp \left\{ - (\log L_k)^\gamma \right\}.
\end{equation}
Then \eqref{e:pk_decay} follows since, by the definition of $\iota_{\hat{k}}$ and \eqref{e:rho_star}--\eqref{e:rho_star_star},
if $\rho \ge \iota_{\hat{k}} \rho_{\hat{k}}$ and the $A_m$'s are non-increasing
(resp.\ $\rho \le \iota_{\hat{k}}^{-1} \rho_{\hat{k}}$ and the $A_m$'s are non-decreasing),
then $p_k(\rho) \le p_k(\rho_k)$.
Thus we only need to prove \eqref{e:ineqwithk}.

To this end, we first claim that, for all $k \ge \hat{k}$,
\begin{equation}\label{e:rhokdoesnotdecaytoofast}
\rho_k \ge L_k^{-1/16}.
\end{equation}
Indeed, if the $A_m$'s are all non-increasing, then $\rho_k \ge \rho_{\hat{k}} \ge L_{\hat{k}}^{-1/16} \ge L_k^{-1/16}$ by \eqref{e:rhok}
and our definition of $\rho_{\hat{k}}$ while, if the $A_m$'s are all non-decreasing,
then \eqref{e:rhokdoesnotdecaytoofast} follows by induction using \eqref{e:rhok},
\eqref{e:Lk} and the assumption that $\hat{\rho} \geq L_{\hat{k}}^{-1/16}$.

Let us now prove \eqref{e:ineqwithk} by induction on $k$.
The case $k=\hat{k}$ holds by hypothesis.
Assume now that \eqref{e:ineqwithk} holds for some $k \ge \hat{k}$.
Noting that $\rho_{k+1} \ge L^{-1/16}_{k+1}$ by \eqref{e:rhokdoesnotdecaytoofast} and that
\eqref{e:induction} holds for $\rho_k$ and $\rho_{k+1}$ replacing $\rho$ and \ $\bar{\rho}$ respectively (because the relation between $\rho_{k+1}$ and $\rho_k$ is exactly as for $\bar{\rho}$ and $\rho$ in Lemma~\ref{l:induction})
we conclude by \eqref{e:assume_induction}, that \eqref{e:ineqwithk} also holds with $k+1$ replacing $k$.
This concludes the induction step and the proof of the theorem.
\end{proof}

\subsection{Constructing cascading events}
\label{ss:cascading}
We provide in this section a systematic way to construct certain collections of cascading events
based on averages of functions of the random environment along Lipschitz paths.
Our ultimate goal is to obtain Corollary~\ref{cor:renormnoscales} below,
which provides in this context a short-cut to ballisticity-type results with minimal 
reference to the bulkier technical setup of the previous section.

Let us first describe the type of paths that we will consider.
We say that a function $\sigma:\Z \to \Z^d$ is $\mathfrak{R}$-Lipschitz
if for any $x, y \in \mathbb{Z}$, we have $|\sigma(x) - \sigma(y)| \leq \mathfrak{R} |x - y|$.
The set of $\mathfrak{R}$-Lipschitz paths is defined as
\begin{equation}\label{e:deffrakS}
\mathfrak{S} := \left\{ \sigma:\Z_+ \to \Z^d \colon\, \sigma(0) =0, \left|\sigma(i+1) - \sigma(i)\right| \le \mathfrak{R}  \;\forall \; i \in \Z_+ \right\},
\end{equation}
We will further restrict the class of paths using a function $H : \overline{\Omega} \times \mathbb{Z}^d \to \{0,1\}$
as follows.
Given such a function $H$ and a path $\sigma : [n, \infty) \to \mathbb{Z}^d$, we define, for $t \geq n$,
\begin{equation}
  h_\sigma(t) := H( \theta_{(\sigma(t), t)} (\omega, U), \sigma(t + 1) - \sigma(t)).
\end{equation}
The interpretation is that $h_\sigma(t) = 1$ if and only if the jump $\sigma(t+1)-\sigma(t)$ is allowed by the random environment according to the rule $H$. The formal definition is as follows.

\begin{definition}
Given a box index $m \in M_k$, we say that an $\mathfrak{R}$-Lipschitz function $\sigma: [n, \infty) \cap \mathbb{Z} \to \mathbb{Z}^d$ is an $m$-crossing if $(\sigma(n), n) \in I_m$ (recall the definition of $B_m$ and $I_m$ in \eqref{e:Bm} and \eqref{e:Im}).
In addition, if for $H : \overline{\Omega} \times \mathbb{Z}^d \to \{0,1\}$ we have $h_{\sigma}(j) = 1$ for all $j \in [n, n+L_k) \cap \mathbb{Z}$, we say that $\sigma$ is an $(m,H)$-crossing.
\end{definition}
\noindent
Figure~\ref{f:Bm_cross} illustrates an $(m, H)$-crossing.
Note that an $(m, H)$-crossing does not exit $B_m$ through its sides.
Furthermore, if $m \in M_{k+1}$, every $(m,H)$-crossing induces $L_{k+1}/L_k$ $(m_i,H)$-crossings for $m_i \in M_k$ such that $B_{m_i} \subset B_m$. Note that in the particular case where $H \equiv 1$ the notions of $m$-crossing and $(m, H)$-crossing coincide.

\begin{remark}
In the remainder of this paper, we will only be interested in applications where $H \equiv 1$, which implies that every $\mathfrak{R}$-Lipschitz function starting in $I_m$ is an $(m, H)$-crossing.
The more general set-up to be used in \cite{BHST16b} will allow us to consider only paths $\sigma$ that coincide with the trajectory performed by the random walker.
\end{remark}

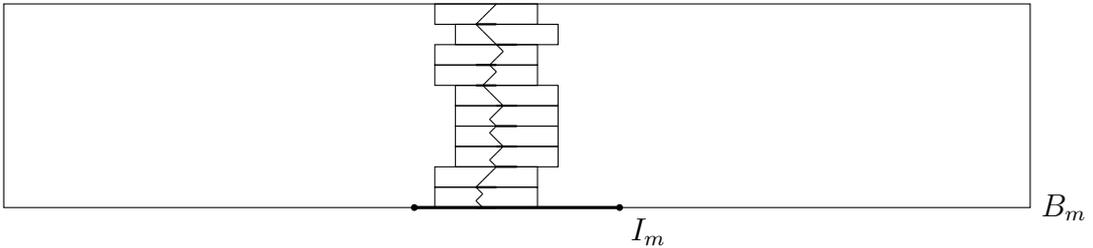
\begin{figure}[h]
  \centering
  \begin{tikzpicture}[scale=.9]
    \draw[very thick] (6,0) -- (9, 0); \node at (9, 0) [below right] {$I_m$};
    \fill (6,0) circle (0.05); \fill (9,0) circle (0.05);
    \xdef\z{7}
    \foreach \w in {0,...,9}
    { \draw[thin] ({.3 * floor(\z / .3) - .6},{.1 * (3 * \w)}) rectangle ({.3 * floor(\z / .3) + .9},{.1 * (3 * \w) + .3});
      \draw[thick] ({.3 * floor(\z / .3)},{.1 * (3 * \w)}) -- ({.3 * floor(\z / .3) + .3},{.1 * (3 * \w)});
      \foreach \y in {0,...,2}
      { \xdef\x{\z}
        \pgfmathparse{\x + .1 * 2 * (round((rand + 1)*0.7)-.5)}
        \xdef\z{\pgfmathresult}
        \draw[thin] (\x, {0.1 * (3 * \w + \y)}) -- (\z, {0.1 * (3 * \w + \y) + .1}) ;
      }
    }
    \draw (0,0) rectangle (15, 3); \node at (15, 0) [right] {$B_m$};
  \end{tikzpicture}
  \caption{The box $B_m$ for some $m \in M_{k+1}$ and an illustrative $m$-crossing $\sigma$.}
  \label{f:Bm_cross}
\end{figure}

The following definition plays a central role in our construction.
\begin{definition}\label{d:chi_g}
Given $g : \overline{\Omega} \to [-1,1]$, an index $m \in M_k$ and an $m$-crossing $\sigma: [n, \infty) \cap \mathbb{Z} \to \mathbb{Z}^d$, we define the average $\chi^g_\sigma$ of $g$ along $\sigma$ by
\begin{equation}\label{e:chi_g}
\chi^g_\sigma (\omega,U) := \frac{1}{L_k} \sum_{i=n}^{n + L_k - 1} g_{(\sigma(i), i)} (\omega,U),
\end{equation}
where $g_y = \theta_y g$ as in \eqref{e:def_gy}.
\end{definition}

Given a scale $\hat{k} \geq 0$ and $v_{\hat{k}} \in [L_{\hat{k}}^{-1/16}, 1]$,
recursively define (compare with \eqref{e:rhok})
\begin{equation}\label{e:vforchi}
v_{k+1} = v_k (1-L_k^{-1/16}), \quad k \ge \hat{k}.
\end{equation}
Note that $v_k$ decreases monotonically to (see \eqref{e:rho_star_star})
\begin{equation}\label{e:defvinfty}
v_\infty : = v_{\hat{k}} \prod_{k \ge \hat{k}} (1- L_k^{-1/16}) \in (0,1).
\end{equation}

Given $g: \overline{\Omega} \rightarrow [-1,1]$, $H : \overline{\Omega} \times \mathbb{Z}^d \to \{0,1\}$ and an integer $k \ge \hat{k}$,
define the events
\begin{equation}\label{e:A_m_with_chi}
A_m = \Big\{ \text{there exists an $(m, H)$-crossing $\sigma$ such that } \chi^g_\sigma < v_k \Big\}, \quad m \in M_k.
\end{equation}
Note that the events defined by \eqref{e:A_m_with_chi} are not necessarily adapted or monotone.
However, as already anticipated, we have the following.
\begin{lemma} \label{l:build_cascade}
The family $(A_m)_{m \in M_{\geq \hat{k}}}$ defined by \eqref{e:A_m_with_chi} is cascading.
\end{lemma}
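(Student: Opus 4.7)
The plan is to establish the cascading inclusion by splitting an $(m,H)$-crossing at scale $k+1$ into consecutive sub-crossings at scale $k$, and then using an averaging argument to extract two sub-crossings that are simultaneously ``bad'' (small $\chi^g$) and well-separated in time.

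I would start by fixing $m = (k+1,y) \in M_{k+1}$, assuming $A_m$ occurs, and taking a witnessing $(m,H)$-crossing $\sigma$ with $\chi^g_\sigma < v_{k+1}$. Writing $n$ for the starting time of $\sigma$ and $J := L_{k+1}/L_k = \lfloor L_k^{1/2}\rfloor$, I would consider the restrictions $\sigma_j := \sigma|_{[n+jL_k,\infty)\cap \Z}$ for $j = 0, \ldots, J-1$. To each $\sigma_j$ I would attach an index $m_j = (k,y_j) \in M_k$ by letting $y_{j,\text{space}}$ be the componentwise floor of $\sigma(n+jL_k)/L_k$ and $y_{j,\text{time}}$ the unique integer making $I_{m_j}$ sit at time $n+jL_k$ (which is in $L_k\Z$ since $n = L_{k+1} y_{\text{time}}$). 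Using the $\mathfrak{R}$-Lipschitz property of $\sigma$, the definitions \eqref{e:Bk}--\eqref{e:Im}, and the inequality $L_{k+1} \ge 3 L_k$ (from \eqref{e:Lk} and \eqref{e:lowerbound_Lk}), one checks that $(\sigma(n+jL_k),n+jL_k) \in I_{m_j}$ and $B_{m_j} \subset B_m$. Moreover, the condition $h_\sigma \equiv 1$ on $[n, n+L_{k+1})$ transfers directly, so each $\sigma_j$ is an $(m_j,H)$-crossing.

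Next I would exploit the identity
\[
\chi^g_\sigma \;=\; \frac{1}{J} \sum_{j=0}^{J-1} \chi^g_{\sigma_j},
\]
which is immediate from Definition~\ref{d:chi_g}. Setting $B := \{j : \chi^g_{\sigma_j} < v_k\}$, the trivial bounds $\chi^g_{\sigma_j} \ge -1$ on $B$ and $\chi^g_{\sigma_j} \ge v_k$ on $\{0,\ldots,J-1\}\setminus B$ combine with $\chi^g_\sigma < v_{k+1} = v_k(1 - L_k^{-1/16})$ to force
\[
|B| \;>\; \frac{J v_k L_k^{-1/16}}{1 + v_k}.
\]
Since $v_k \ge v_\infty > 0$ by \eqref{e:defvinfty} and $J \ge L_k^{1/2}/2$ by \eqref{e:lowerbound_Lk}, this lower bound grows like a constant times $L_k^{7/16}$, which is comfortably larger than $3$ uniformly in $k \ge \hat{k}$ given $L_0 = 10^{50}$.

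Finally, I would pick any three distinct indices in $B$ and retain the smallest $j^{(1)}$ and the largest $j^{(3)}$; as distinct integers they satisfy $j^{(3)} - j^{(1)} \ge 2$, so the vertical distance between $B_{m_{j^{(1)}}}$ and $B_{m_{j^{(3)}}}$ equals $(j^{(3)}-j^{(1)}-1)L_k + 1 \ge L_k$, giving $m_{j^{(1)}} \underset{m}{\leftrightarrow} m_{j^{(3)}}$. Since $\chi^g_{\sigma_{j^{(i)}}} < v_k$ witnesses $A_{m_{j^{(i)}}}$ for $i=1,3$, this yields $A_m \subset A_{m_{j^{(1)}}} \cap A_{m_{j^{(3)}}}$ and hence \eqref{e:cascading}. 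I expect the main technical hurdle to be the geometric verification $B_{m_j} \subset B_m$: one must track the $2\mathfrak{R} L_k$ spatial padding built into $B_{k,0}$ against the drift that $\sigma$ may accumulate away from $I_m$, which is precisely what motivates the asymmetric spatial extent $[-2\mathfrak{R} L_k,3\mathfrak{R} L_k)^d$ in \eqref{e:Bk} and the fact that $L_{k+1}/L_k$ is taken much larger than $\mathfrak{R}$.
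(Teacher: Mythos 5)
Your proposal is correct and follows essentially the same approach as the paper: split the $(m,H)$-crossing at scale $k+1$ into $J = L_{k+1}/L_k$ consecutive sub-crossings at scale $k$, observe that $\chi^g_\sigma$ is the average of the $\chi^g_{\sigma_j}$, and conclude that at least three of them must fall below $v_k$, yielding two vertically separated boxes where the bad event occurs. The only presentational difference is that the paper argues by contradiction (assuming at most two bad time-slices and deriving $\chi^g_\sigma \ge v_{k+1}$), whereas you count directly by bounding $|B|$ from below; the arithmetic is the same, and your quantitative bound $|B| > J v_k L_k^{-1/16}/(1+v_k)$ is a slightly sharper restatement of the paper's estimates \eqref{e:displace}--\eqref{e:k2large}. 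One small imprecision: your final inclusion should read $A_m \subset \bigcup_{m_1 \underset{m}{\leftrightarrow} m_2} A_{m_1} \cap A_{m_2}$, since the indices $j^{(1)}, j^{(3)}$ depend on the realization through the witnessing crossing $\sigma$, but this is exactly the form required by \eqref{e:cascading}.
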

\begin{proof}
Fix $k \ge \hat{k}$ and recall that we have assumed $v_{\hat{k}} \geq L_{\hat{k}}^{-1/16}$.
The first thing we note is that this inequality holds for all $k \geq \hat{k}$.
This indeed follows by induction using the definition of $v_k$ exactly as for \eqref{e:rhokdoesnotdecaytoofast}.

Next we claim that:
\begin{display}
\label{e:A_k+1_k}
if $A_m$ occurs for some $m \in M_{k+1}$, then there exist at least three elements\\
$m_i = (k, y_i)$ in $M_k$, $i=1,2,3$, with $y_i = (x_i,s_i) \in \mathbb{Z}^{d}\times \mathbb{Z}$ and $B_{m_i} \subset B_m$\\
satisfying $s_i \neq s_j$ when $i \neq j$ and such that $A_{m_i}$ occurs for $i = 1,2,3$.
\end{display}
Indeed, assume by contradiction that
\begin{display}
\label{e:at_most_two}
there are at most two elements $m' = (k,y')$, $m'' = (k, y'')$ in $M_k$
with $y' = (x',s')$ and $y'' = (x'',s'')$, $s' \neq s''$ and
$B_{m'}$, $B_{m''}$ contained in $B_m$\\ for which the events $A_{m'}$ and $A_{m''}$ happen.
\end{display}
Let $\sigma$ be an $(m, H)$-crossing.
We split its domain into disjoint intervals of length $L_k$:
\begin{equation}
\{n, \dots, n + L_{k+1} - 1\} = \bigcup_{j=1}^{J} \{n + (j-1)L_k, \dots, n + jL_k - 1\}, \text{ where $J = L_{k+1}/L_k$},
\end{equation}
Let us denote by $\sigma_j$, $j\in \{1, \ldots, J\}$, the restriction of $\sigma$ to $[n+(j-1)L_k, n+jL_k)$ which is again an $(m_j,H)$-crossing for an appropriate index $m_j$ in $M_k$, with $B_{m_j} \subset B_m$ (see Figure~\ref{f:Bm_cross}).
We now estimate
\begin{align}\label{e:displace}
\chi^g_\sigma
& \nonumber = \frac{1}{L_{k+1}} \sum_{i=n}^{n + L_{k+1} - 1} g_{(\sigma(i), i)} = \frac{1}{L_{k+1}} \sum_{j = 1}^J \sum_{i = n + jL_k}^{n + (j+1)L_k - 1} g_{(\sigma(i), i)} = \frac{L_k}{L_{k+1}} \sum_{j = 1}^J \chi^g_{\sigma_j}\\
& \overset{\eqref{e:at_most_two}}\geq -2 \frac{L_{k}}{L_{k+1}} + v_k \frac{L_k}{L_{k+1}} \left(\frac{L_{k+1}}{L_k} - 2\right)\\
& \nonumber \geq v_k - 4 \frac{L_k}{L_{k+1}} = v_{k+1} + (v_k-v_{k+1}) - 4 \frac{L_k}{L_{k+1}},
\end{align}
where, in the first inequality, we used the fact that,
if $A_{m_i}$ does not occur for some $i \in \{1,\ldots, J\}$, then $1 \geq \chi_\sigma^g \geq v_k$.
From the definition of $v_k$, we see that
\begin{equation}
\label{e:k2large}
v_k - v_{k+1} = v_k L_k^{-1/16} \geq \frac{1}{L^{1/8}_k} \geq \frac {4}{\lfloor L_k^{1/2} \rfloor} = 4\, \frac{L_k}{L_{k+1}},
\end{equation}
where for the second inequality we use $L_k \ge 10^{50}$ (cf.\ \eqref{e:Lk}).
Substituting this into \eqref{e:displace}, we get $\chi^g_\sigma \geq v_{k+1}$ so that $A_m$ cannot occur.
This proves the claim \eqref{e:A_k+1_k}.

Thus, on the event $A_m$, we may assume that there exist $m_1 = (k, y_1)$, $m_3 = (k, y_3)$ in $M_k$
where $y_1 = (x_1,s_1)$ and $y_3 = (x_3, s_3)$ with $s_3 \geq s_1 + 2$
(meaning that the vertical distance between $B_{m_3}$ and $B_{m_1}$ is at least $L_k$)
and such that both $A_{m_1}$ and $A_{m_3}$ occur.
This finishes the proof of the lemma.
\end{proof}

The events defined by \eqref{e:A_m_with_chi}
may be analysed with the help of Theorem~\ref{t:pk_decay}
whenever they are adapted and monotone.
We next give a complementary result stating that,
whenever the conclusion of Theorem~\ref{t:pk_decay} holds for $(A_m)_{m \in M_{\ge \hat{k}}}$,
it can be extended by interpolation to boxes of length $L \in \N$ (not necessarily of the form $L_k$).
We first need to extend the above definitions.

For $y\in\Z^d \times \Z$ and $L\in \N$, let
\begin{equation}
B_{y,L}=y+\{[-2 \mathfrak{R} L,3 \mathfrak{R} L)^d\times[0,L)\} \cap (\mathbb{Z}^{d}\times \mathbb{Z})
\end{equation}
and similarly
\begin{equation}
I_{y,L}=y+([0,\mathfrak{R} L)^d\times\lbrace 0\rbrace)\cap(\mathbb{Z}^{d}\times\mathbb{Z}).
\end{equation}
Given a function $H : \overline{\Omega} \times \mathbb{Z}^d \to \{0, 1\}$ and $y = (x, n)$, we define a $(y, L, H)$-crossing to be an $\mathfrak{R}$-Lipschitz function
$\sigma: [n, \infty) \cap \Z \to \Z^d$ such that $(\sigma(n), n) \in I_{y,L}$ and $h_\sigma(j) = 1$ for every $j \in [n, n + L) \cap \mathbb{Z}$.
If the function $H$ is identically equal to one we simply say that $\sigma$ is a $(y, L)$-crossing.

Finally, given $\sigma$ a $(y, L, H)$-crossing, we let
\begin{equation}\label{e:chi_g_L}
\chi^g_\sigma (\omega,U) := \frac{1}{L} \sum_{i=n}^{n + L - 1} g_{(\sigma(i), i)} (\omega,U).
\end{equation}
Our interpolation result reads as follows.
\begin{proposition}\label{p:interpolate}
Fix $\hat{k} \ge 0$, $v_{\hat{k}} \in [L_{\hat{k}}^{-1/16},1]$ 
and two functions $g: \overline{\Omega} \rightarrow[-1,1]$, $H : \overline{\Omega} \times \mathbb{Z}^d \to \{0, 1\}$.
Define $(A_m)_{m \in M_{\ge \hat{k}}}$ by \eqref{e:A_m_with_chi}.
Fix $\rho > 0$, $\gamma \in (1,3/2]$ and assume that
\begin{equation}\label{e:assump_interpolate}
p_k(\rho) = \max_{m \in M_k} \P^{\rho} \left( A_m \right) \le \exp \left\{ -(\log L_k)^{\gamma}\right\} \;\;\; \forall \; k \ge \hat{k}.
\end{equation}
Then, for every $\varepsilon > 0$, there exists $c > 0$ such that
\begin{equation}\label{e:interpolate}
\P^\rho\left(\exists \text{ a }(0,L)\text{-crossing }\sigma\text{ s.t. }\chi_\sigma^g< v_{\infty}-\varepsilon \right)
\le c^{-1} e^{- c (\log L)^\gamma }
\;\;\;\; \forall\; L \ge 1,
\end{equation}
where $v_\infty$ is given by \eqref{e:defvinfty}.
\end{proposition}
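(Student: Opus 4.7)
The strategy is to cover each candidate $(0,L)$-crossing by sub-crossings at the largest scale $L_k$ that is still much smaller than $L$, and then invoke the hypothesis \eqref{e:assump_interpolate} through a union bound. Fix $L \ge 1$ and $\varepsilon>0$ and let $k=k(L,\varepsilon)$ be the largest integer with $L_k \le \varepsilon L/2$. For $L$ so small that no such $k\ge \hat{k}$ exists (a finite range bounded in terms of $\hat{k}$ and $\varepsilon$), \eqref{e:interpolate} holds trivially after shrinking $c$; so assume $k\ge \hat{k}$.

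Given a $(0,L)$-crossing $\sigma$, set $J := \lfloor L/L_k \rfloor$ and define the restrictions $\sigma_j := \sigma|_{[jL_k,\infty)}$ for $j = 0,\ldots,J-1$. For each $j$ pick $z_j\in\Z^d$ coordinate-wise so that $(\sigma(jL_k),jL_k)\in I_{m_j}$ with $m_j:=(k,(z_j,j))$; then each $\sigma_j$ is an $(m_j,H)$-crossing in the sense of Definition above. Suppose $\chi^g_{\sigma_j} \ge v_k$ held for every $j$. Bounding the at most $L_k$ leftover terms by $1$ in absolute value and using $v_k\le 1$,
\begin{equation*}
\chi^g_\sigma = \frac{L_k}{L}\sum_{j=0}^{J-1}\chi^g_{\sigma_j} + \frac{1}{L}\sum_{i=JL_k}^{L-1} g_{(\sigma(i),i)}
\ge \frac{JL_k}{L}v_k - \frac{L-JL_k}{L}
\ge v_k - 2\frac{L_k}{L}
\ge v_\infty - \varepsilon,
\end{equation*}
where the last step uses $v_k \ge v_\infty$ (by \eqref{e:defvinfty}) and $L_k\le \varepsilon L/2$. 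Contrapositively, the event on the left of \eqref{e:interpolate} is contained in $\bigcup_{j,z_j} A_{m_j}$. Since $\sigma$ is $\mathfrak{R}$-Lipschitz with $\sigma(0)\in[0,\mathfrak{R} L)^d$, the values $\sigma(jL_k)$ lie in $[-\mathfrak{R} L,2\mathfrak{R} L]^d$, which forces the vectors $z_j$ into a cube of side at most $C_1 L/L_k$ for a constant $C_1=C_1(\mathfrak{R},d)$; thus the number of admissible pairs $(j,z_j)$ is at most $C_2(L/L_k)^{d+1}$.

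By the union bound and \eqref{e:assump_interpolate},
\begin{equation*}
\P^\rho\bigl(\exists\ (0,L)\text{-crossing }\sigma\text{ with } \chi^g_\sigma<v_\infty-\varepsilon\bigr)
\le C_2 \Bigl(\frac{L}{L_k}\Bigr)^{d+1} \exp\bigl\{-(\log L_k)^\gamma\bigr\}.
\end{equation*}
From $L_{k+1}\le L_k^{3/2}$ (cf.\ \eqref{e:Lk}) and the defining property $L_{k+1}>\varepsilon L/2$ we obtain $L_k\ge (\varepsilon L/2)^{2/3}$, hence $\log L_k \ge \tfrac{2}{3}\log L - C_3$. Since $\gamma>1$, the polynomial prefactor $(L/L_k)^{d+1}\le L^{d+1}$ is absorbed by $\exp\{-(\log L_k)^\gamma\}$, producing the stretched-exponential bound $c^{-1}\exp\{-c(\log L)^\gamma\}$ for a suitable $c>0$. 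The argument has no real probabilistic difficulty beyond the hypothesis; the only delicate point is the bookkeeping that each sub-path $\sigma_j$ indeed qualifies as an $(m_j,H)$-crossing and that the combinatorial count of admissible boxes at scale $k$ is only polynomial in $L/L_k$, which is exactly where the $\mathfrak{R}$-Lipschitz condition is exploited.
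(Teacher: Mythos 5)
Your proof is correct, but it takes a genuinely different and simpler route than the paper's. The paper's argument (which follows Lemma~3.5 of \cite{HHSST14}) introduces a single event $\mathcal{B}_{\check{k}}$ packaging the failure events $A_m$ for \emph{all} scales $k\ge\check{k}$ (with $\check{k}$ tied to $L$ by $2L_{\check{k}+2}\le L<2L_{\check{k}+3}$), bounds $\P(\mathcal{B}_{\check{k}}^c)$ by a sum over scales, and then inductively shows that on $\mathcal{B}_{\check{k}}$ every crossing of length a suitable multiple of some $L_k$ satisfies $\chi^g_\sigma\ge v_\infty$ \emph{exactly}, before interpolating to arbitrary $L$ at the cost of $\varepsilon$. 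You instead pick a single scale $k$ with $L_k\approx(\varepsilon L/2)^{2/3}$, cut the crossing into $\lfloor L/L_k\rfloor$ consecutive sub-crossings of length $L_k$ plus a short remainder, and apply a direct union bound over the $O((L/L_k)^{d+1})$ candidate boxes at scale $k$. This avoids both the multi-scale event and the induction; it loses $\varepsilon$ not only from the remainder term but also from invoking $v_k\ge v_\infty$ rather than chasing the exact limit. The polynomial prefactor $(L/L_k)^{d+1}$ that your union bound produces is absorbed by $\exp\{-(\log L_k)^{\gamma}\}$ because $\gamma>1$, precisely the same absorption the paper relies on when summing over boxes in its estimate of $\P(\mathcal{B}_{\check{k}}^c)$, so nothing is lost quantitatively. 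The bookkeeping that each restriction $\sigma_j$ really is an $(m_j,H)$-crossing (the $H$-constraint is inherited from $\sigma$, and $(\sigma(jL_k),jL_k)$ lands in some $I_{m_j}$ with spatial label in a cube of side $O(L/L_k)$ by the $\mathfrak{R}$-Lipschitz bound) is also the essential combinatorial fact in the paper's version; you identified it correctly. In short: same ingredients, but a one-scale union bound in place of the paper's all-scales event plus induction plus interpolation.
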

\begin{proof}
We follow the proof of Lemma~3.5 in \cite{HHSST14}.
We may assume $L$ to be so large that, defining $\check{k}$ by $2L_{\check{k}+2}\leq L<2L_{\check{k}+3}$,
then $\check{k}\geq \hat{k}$ and $\frac{L_{k+1}}{L_{k}}>1+2/\varepsilon$ for all $k\geq \check{k}$.

We first consider multiples of $L_k$, $k\geq\check{k}$. Define
\begin{equation}
M'_k:=\lbrace m\in M_k : B_m\subset B_{(k+2, 0)}\rbrace,
\end{equation}
\begin{equation}
\mathcal{B}_{\check{k}}:=\bigcap_{k\geq\check{k}}\bigcap_{m\in M'_k}A_m^c
\end{equation}
\begin{equation}
\text{and}\qquad J_{\check{k}}:=\bigcup_{k\geq \check{k}}\bigcup_{l=1}^{L_{k+2}/L_k}\lbrace l L_k\rbrace\subset \N.
\end{equation}
Let us see that $\mathcal{B}_{\check{k}}$ has high probability. Indeed,
\begin{align}\label{e:printerp1}
\P^\rho\left(\mathcal{B}_{\check{k}}^c \right) \leq \sum_{k\geq \check{k}}\sum_{m\in M'_k}\mathbb{P}(A_m)
& \overset{\text{\eqref{e:assump_interpolate}}}{\leq}
\sum_{k\geq \check{k}} c \left(\frac{L_{k+2}}{L_k}\right)^{d+1}e^{- (\log L_k)^\gamma}
\nonumber\\
& \;\; \leq  \;
c^{-1} e^{- c(\log L_{\check{k}})^\gamma} \le c^{-1} e^{- c (\log L)^{\gamma}},
\end{align}
where for the last inequality we used that $L_{\check{k}} \ge L_{\check{k}+3}^{(2/3)^3} > \tfrac12 L^{(2/3)^3}$.

We now claim that:
\begin{equation}\label{e:LinJ}
\text{on $\mathcal{B}_{\check{k}}$, for any $L' \in J_{\check{k}}$ and any $((0,L'),h)$-crossing $\sigma$, we have $\chi_\sigma^g\geq v_\infty$.}
\end{equation}
Let us prove this for fixed $k\geq \check{k}$ by induction on $l\leq L_{k+2}/L_k$.
For $l=1$, the claim holds since any $((0, L_k), h)$-crossing $\sigma$ is indeed an $(m, H)$-crossing for $m=(k, 0)$ so that, on the event $A_m^c \subset \mathcal{B}_{\check{k}}$, we have $\chi_\sigma^g \geq v_k \geq v_\infty$.
Assume that it holds for $L'=l L_k$ with $1\leq l<L_{k+2}/L_k$.
Fix a $((0,(l+1)L_k),h)$-crossing $\sigma$.
Notice that there exists $m\in M'_k$ such that $(\sigma(lL_k),lL_k)\in I_m$.
Therefore, $\sigma$ restricted to times $i=lL_k,\ldots, (l+1)L_k-1$ is an $(m,H)$-crossing and $\sigma$ restricted to times $i=0,\ldots, lL_k-1$ is a $((0,l L_k),h)$-crossing.
Hence, by induction and since we are on $\mathcal{B}_{\check{k}}$,
\begin{eqnarray}
\sum_{i=0}^{(l+1)L_k -1}g_{(\sigma(i),i)} & = & \sum_{i=0}^{lL_k -1}g_{(\sigma(i),i)}+\sum_{i=lL_k}^{(l+1)L_k -1}g_{(\sigma(i),i)}\nonumber\\
&\geq& v_\infty lL_k+ v_k L_k \ge v_\infty (l+1)L_k,
\end{eqnarray}
completing the induction step and proving \eqref{e:LinJ}.

We now prove \eqref{e:interpolate}. In fact, we show that
\begin{equation}\label{e:printerp_end}
\text{on $\mathcal{B}_{\check{k}}$, $\chi_\sigma^g\geq v_{\infty} -\varepsilon$ for any $(0,n)$-crossing $\sigma$ with $n \geq 2L_{\check{k}+2}$}.
\end{equation}
To this end, for $n$ as in \eqref{e:printerp_end}, define $\bar{k}=\bar{k}(n)$ to be the smallest integer
such that $n\in[ \bar{l}L_{\bar{k}},(\bar{l}+1)L_{\bar{k}})$ for some $1 \le \bar{l}\leq L_{\bar{k}+2}/L_{\bar{k}} $.
Note that $\bar{k} \ge \check{k}$ and that, by the minimality of $\bar{k}$,
we have $n \ge L_{\bar{k}+1}$ and thus $\bar{l}\geq 2/\varepsilon$.
Then estimate
\begin{eqnarray}
\sum_{i=0}^{n -1}g_{(\sigma(i),i)}&=&\sum_{i=0}^{\bar{l}L_{\bar{k}}-1}g_{(\sigma(i),i)}+\sum_{i=\bar{l}L_{\bar{k}}}^{n -1}g_{(\sigma(i),i)}\nonumber\\
&\geq & v_\infty \bar{l}L_{\bar{k}}-L_{\bar{k}} = L_{\bar{k}}((v_\infty-\varepsilon)\bar{l}+\varepsilon\bar{l}-1) \nonumber\\
&\geq& L_{\bar{k}}(v_\infty- \varepsilon)(\bar{l}+1) \geq n(v_{\infty}-\varepsilon),
\end{eqnarray}
where we used \eqref{e:LinJ}, $|g| \le 1$ and $v_\infty \le 1$.
This finishes the proof.
\end{proof}

We may now state our target corollary,
which conveniently summarizes ballisticity-type results
without explicit reference to most of the technical renormalisation setup. Recall \eqref{e:chi_g_L} and the definition above.

\begin{corollary}\label{cor:renormnoscales}
Let $(L_k)_{k \in Z_+}$ be given by \eqref{e:Lk}.
For any $\gamma \in (1,3/2]$, there exists an index $k_o = k_o(\gamma, d) \in \N$ satisfying the following.
Fix two functions $g :\overline{\Omega} \to [-1, 1]$, $H : \overline{\Omega} \times \mathbb{Z}^d \to \{0, 1\}$
and two non-negative sequences $v(L), \rho(L)$.
Assume that, for some $L_* \in \N$ and all $L \ge L_*$,
$v(L) \wedge \rho(L) \geq L^{-1/16}$ and, for any $\hat{v} >0$, the event
\begin{equation}
\label{e:H_g_non_decr}
\big\{\text{there exists a $(0, L, H)$-crossing $\sigma$ such that $\chi_\sigma^g  \leq \hat{v}$}\big\}
\end{equation}
is measurable in $\sigma(N(y), U_y \colon\, y \in B_{0,L})$ and non-increasing (respectively non-decreasing).
Assume additionally that, for some $\hat{k} \ge k_o$ such that $L_{\hat{k}} \ge L_*$,
\begin{equation}
\label{e:H_g_upper_bound}
\mathbb{P}^{\rho(L_{\hat{k}})}(\exists \text{ a $(0, L_{\hat{k}}, H)$-crossing $\sigma$ such that $\chi_\sigma^g \leq v(L_{\hat{k}})$}) \leq \exp(-(\log L_{\hat{k}})^\gamma).
\end{equation}
Then there exist (explicit) $\rho_\infty, v_\infty>0$ such that, for each $\varepsilon>0$, 
\begin{equation}
\mathbb{P}^{\rho}( \exists \text{ a $(0, L, H)$-crossing $\sigma$ such that $\chi_\sigma^g \leq v_\infty - \varepsilon$}) \leq c^{-1}\exp(-c (\log L)^\gamma)
\end{equation}
for some $c \in (0,\infty)$, all $L \in \N$ and all $\rho \ge \rho_\infty$ (respectively, all $\rho \le \rho_\infty$).
\end{corollary}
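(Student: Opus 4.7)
The plan is to recognize that this corollary is essentially a clean re-packaging of the machinery already built: Lemma~\ref{l:build_cascade} (which gives cascading), Theorem~\ref{t:pk_decay} (which propagates the base bound across scales of the form $L_k$), and Proposition~\ref{p:interpolate} (which interpolates to arbitrary $L$). So the proof is mostly a verification that the hypotheses of these three results hold with the natural choice of parameters.

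First, I would set $v_{\hat{k}} := v(L_{\hat{k}})$ and $\hat{\rho} := \rho(L_{\hat{k}})$, then let $(v_k)_{k \ge \hat{k}}$ be defined by the recursion \eqref{e:vforchi} and define the family $(A_m)_{m \in M_{\ge \hat{k}}}$ via formula \eqref{e:A_m_with_chi}, using this $v_k$ and the given $g,H$. The hypothesis $v(L_{\hat{k}}) \ge L_{\hat{k}}^{-1/16}$ places $v_{\hat{k}}$ in the range required by \eqref{e:vforchi}, and $v_\infty$ is then the quantity produced by \eqref{e:defvinfty}.

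Second, I would check each of the three structural properties needed to invoke Theorem~\ref{t:pk_decay}: the family is \emph{adapted} because \eqref{e:H_g_non_decr} is exactly the statement that the indicator of $A_m$ (for $m = (k, y)$) is supported on $B_m$ after space-time translation; it is \emph{cascading} by Lemma~\ref{l:build_cascade}; and it is uniformly non-increasing (or uniformly non-decreasing) by the monotonicity assumption in \eqref{e:H_g_non_decr}. The base-scale bound \eqref{e:kprime_rhoprime} at density $\hat{\rho}$ and scale $\hat{k}$ is provided by \eqref{e:H_g_upper_bound}, and $\hat{\rho} \ge L_{\hat{k}}^{-1/16}$ is one of the standing hypotheses; choosing $k_o$ as in Theorem~\ref{t:pk_decay} ensures $\hat{k} \ge k_o$. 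Applying Theorem~\ref{t:pk_decay} thus yields, in the non-increasing case,
\begin{equation}
p_k(\rho) \le \exp\!\bigl(-(\log L_k)^\gamma\bigr) \quad \text{for all } k \ge \hat{k},\ \rho \ge \rho_* := \iota_{\hat{k}}\,\hat{\rho},
\end{equation}
and symmetrically, in the non-decreasing case, for all $\rho \le \rho_{**} := \iota_{\hat{k}}^{-1}\hat{\rho}$.

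Third, with this bound in hand for every scale $k \ge \hat{k}$, the hypothesis of Proposition~\ref{p:interpolate} is met (for the given $\rho$ in the appropriate half-line of densities). Applying that proposition directly gives, for every $\varepsilon > 0$, a constant $c > 0$ such that
\begin{equation}
\P^\rho\!\bigl(\exists \text{ a } (0,L,H)\text{-crossing } \sigma \text{ with } \chi_\sigma^g < v_\infty - \varepsilon\bigr) \le c^{-1} e^{-c(\log L)^\gamma}
\end{equation}
for all $L \in \N$. Setting $\rho_\infty := \rho_*$ in the non-increasing case (respectively $\rho_\infty := \rho_{**}$ in the non-decreasing case) completes the proof.

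There is no real obstacle here beyond careful bookkeeping; the only non-trivial subtlety is confirming that the hypothesis \eqref{e:H_g_non_decr} — phrased for crossings starting at the base $I_{0,L}$ of a box of side $L$ — indeed implies the supportedness and monotonicity of $A_m$ for each $m \in M_k$ via the space-time translation invariance built into the construction (Section~\ref{s:construction}), but this follows since $A_m$ is defined as a translate of the event in \eqref{e:H_g_non_decr} with $L = L_k$ and $\hat{v} = v_k$.
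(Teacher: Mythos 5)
Your proposal is correct and follows essentially the same route as the paper's own proof: set $v_{\hat{k}}:=v(L_{\hat{k}})$ and $\hat{\rho}:=\rho(L_{\hat{k}})$, define $A_m$ via \eqref{e:A_m_with_chi}, check that the family is adapted, cascading (by Lemma~\ref{l:build_cascade}) and monotone, trigger Theorem~\ref{t:pk_decay} with \eqref{e:H_g_upper_bound}, and finish with Proposition~\ref{p:interpolate}. Your observation about using translation invariance to pass from the event at $(0,L)$ in \eqref{e:H_g_non_decr} to the general $A_m$ is correct bookkeeping that the paper leaves implicit.
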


Before we proceed to the proof, a few words about Corollary~\ref{cor:renormnoscales}.
Assumption \eqref{e:H_g_upper_bound} can be interpreted as a \emph{triggering condition},
i.e., an a-priori estimate that must be provided in order to start the renormalisation procedure.
The measurability and monotonicity assumptions must be checked in each case.
Note that measurability follows whenever $g$ and $H(\cdot,x)$ (for all $x \in \Z^d$) are \emph{local} 
(i.e., supported in a finite set in the sense of Definition~\ref{e:defsupportedK}) 
and \emph{instantaneous}, where we say that a function $f:\overline{\Omega} \to \R$ is instantaneous if
$f(\omega, U) \in \sigma(N(z,0),U_{(z,0)} \colon\, z \in \Z^d)$,
i.e., $f$ depends only on one time slice of the random environment.

\begin{proof}[Proof of Corollary~\ref{cor:renormnoscales}]
Let $k_o \in \N$ be as in the statement of Theorem~\ref{t:pk_decay},
and fix $\hat{k} \ge k_o$ satisfying $L_{\hat{k}} \ge L_*$ and \eqref{e:H_g_upper_bound}.
Setting $v_{\hat{k}} := v(L_{\hat{k}})$, define $v_k$, $k > \hat{k}$ as in \eqref{e:vforchi}
and $v_\infty$ as in \eqref{e:defvinfty}.
For $k \geq \hat{k}$, let $A_m$ be defined as in \eqref{e:A_m_with_chi} and $p_k(\rho)$ as in \eqref{e:pk}.
Note that the events $A_m$, for $m \in M_{\geq \hat{k}}$ as above are cascading, adapted and non-decreasing (resp.\ non-increasing) according to Lemma~\ref{l:build_cascade} and our assumptions.

Set now $\hat{\rho} := \rho(L_{\hat{k}})$ and note that, by \eqref{e:H_g_upper_bound},
\begin{equation}
p_{\hat{k}}(\hat{\rho}) \leq \exp \big( - (\log  L_{\hat{k}})^\gamma \big).
\end{equation}
Therefore we can use Theorem~\ref{t:pk_decay} to conclude that, for some $\rho_\infty > 0$ (more precisely, $\rho_\infty := \rho_*$ in the non-increasing case, or $\rho_\infty := \rho_{**}$ in the non-decreasing case),
\begin{equation}
p_k(\rho) \leq \exp \big\{ - (\log L_k)^{\gamma} \big\}, \text{ for every $k \geq \hat{k}$}
\end{equation}
for any $\rho \geq \rho_\infty$ in the non-increasing case, or any $\rho \le \rho_\infty$ in the non-decreasing case. 
The conclusion then follows from Proposition~\ref{p:interpolate}.
\end{proof}

\section{Applications}
\label{s:applications}

This section is dedicated to applying the renormalization setup developed in Section~\ref{s:renormalization} to show ballistic behavior of two processes.
Namely, for a random walker in the environment of simple random walks and for the front of an infection process.

\subsection{Random walker on random walks (large density)}
In this subsection, we will prove a ballisticity result for the random walker in the environment of simple random walks,
generalizing Theorem~1.5 of \cite{HHSST14}.
Let
\begin{equation}\label{e:defcH}
\cH_{v,L} := \left\{(x,n) \in \Z^d \times \Z \colon\, x \cdot e_1 \le -L + v n \right\}.
\end{equation}
\begin{theorem}[Ballisticity condition]
\label{thm:ballisticity}
For any $v_\star \in (0,v_\bullet)$,
there exists $\rho_\star \in (0,\infty)$ and $c>0$ such that, for all $\rho \ge \rho_\star$,
\begin{equation}\label{e:LD}
\P^\rho \left( \exists \, n \ge 0 \colon\, Y_n \in \cH_{v_\star, L}  \right) \le c^{-1} e^{-c (\log L)^{3/2}} \;\;\; \forall \; L \ge 1.
\end{equation}
\end{theorem}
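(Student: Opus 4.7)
The strategy is to apply Corollary~\ref{cor:renormnoscales} with an auxiliary function $g$ measuring whether sites are ``dense'', translate the resulting bound into ballisticity of $X$ via a quenched martingale concentration, and union-bound over times $n$.

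\emph{Choice of $g$ and $H$.} Take $H\equiv 1$, so that every $\mathfrak{R}$-Lipschitz path starting in $I_{0,L}$ is a $(0,L,H)$-crossing. Fix $\delta\in(0,v_\bullet-v_\star)$ and, by Assumption~(D), choose $K\in\N$ with $\sum_{x\in\cS}\alpha(k,x)(x\cdot e_1)\ge v_\bullet-\delta$ for all $k\ge K$. Set $g(\omega,U):=\mathbbm{1}\{N(0,0)\ge K\}\in\{0,1\}\subset[-1,1]$. Since $g$ is instantaneous and local, the event \eqref{e:H_g_non_decr} is measurable in the variables on $B_{0,L}$; and since $g$ is non-decreasing in $\omega$ (and does not depend on $U$), this event is non-increasing in $(\omega,U)$.

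\emph{Triggering and renormalization.} Pick $\varepsilon_0>0$ small, $\hat k\ge k_o$, set $v(L_{\hat k}):=1-\varepsilon_0$, and verify
\[
\P^{\rho_{\hat k}}\!\left(\exists\ (0,L_{\hat k},H)\text{-crossing }\sigma:\ \chi^g_\sigma\le 1-\varepsilon_0\right)\le \exp\!\big(-(\log L_{\hat k})^{3/2}\big)
\]
for $\rho_{\hat k}$ sufficiently large. This is the principal technical step: one reduces to controlling the number of sparse sites along any Lipschitz path in the base box, via a union bound over the finitely many such paths together with tail estimates exploiting the explicit Poisson-point-process construction of $N$ from Section~\ref{s:construction} to handle the space-time correlations between sparsity indicators. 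Corollary~\ref{cor:renormnoscales} then yields $\rho_\infty>0$ and $v_\infty\in(0,1]$ (which can be taken arbitrarily close to $1$ by decreasing $\varepsilon_0$ and increasing $\hat k$) such that, for each $\varepsilon>0$ and all $\rho\ge\rho_\infty$,
\[
\P^\rho\!\left(\exists\ (0,L,H)\text{-crossing }\sigma:\ \chi^g_\sigma\le v_\infty-\varepsilon\right)\le c^{-1}e^{-c(\log L)^{3/2}}\quad \forall\,L\ge 1.
\]

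\emph{From density to speed, and conclusion.} Since $X$ is itself a $(0,L,H)$-crossing, on the complementary ``good event'' at scale $L$ the walker visits sites with $N\ge K$ at least $(v_\infty-\varepsilon)L$ times during $[0,L)$. Consider the bounded-increment martingale $M_n:=X_n\cdot e_1-\sum_{i<n}\E[(X_{i+1}-X_i)\cdot e_1\mid \mathcal{G}_i]$ with respect to the natural filtration $\mathcal{G}_i$ generated by the environment and $X_0,\ldots,X_i$; by construction of $X$,
\[
\E[(X_{i+1}-X_i)\cdot e_1\mid\mathcal{G}_i]\ge (v_\bullet-\delta)\,\mathbbm{1}\{N(X_i,i)\ge K\}-\mathfrak{R}\,\mathbbm{1}\{N(X_i,i)<K\}.
\]
On the good event, the accumulated drift through time $n$ exceeds $(v_\star+\eta)n$ for some $\eta>0$ (by choosing $\varepsilon_0,\varepsilon,\delta$ small enough), and Azuma's inequality for the $2\mathfrak{R}$-bounded-increment $M$ supplies exponential-in-$n$ concentration. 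Noting that $\{Y_n\in\cH_{v_\star,L}\}=\emptyset$ for $n<L/(\mathfrak{R}+v_\star)$ by the Lipschitz property of $X$, summing the resulting per-$n$ bounds over larger $n$ and using the elementary estimate $\sum_{n\ge L}e^{-c(\log n)^{3/2}}\le c^{-1}e^{-c'(\log L)^{3/2}}$ yields the claim of Theorem~\ref{thm:ballisticity}. The chief difficulty is the triggering condition above, which requires careful use of the Poisson structure of $N$ because the sparsity indicators along a path are not independent and classical Chernoff bounds do not apply directly.
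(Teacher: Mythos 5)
Your overall strategy is correct and is the same as the paper's: apply the renormalization machinery to $g = \mathbbm{1}\{N(0,0)\geq K\}$ to show that all $\mathfrak{R}$-Lipschitz paths visit ``dense'' sites most of the time, then feed that into a quenched martingale/Azuma estimate on $X_n\cdot e_1$ and union-bound over $n$. This matches the paper's split into Proposition~\ref{prop:enoughparticles} and Proposition~\ref{prop:quencheddev} and the short argument that combines them.

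However there is a genuine gap precisely where you flag ``the principal technical step'': the triggering bound \eqref{e:H_g_upper_bound}. You propose to control the number of sparse sites \emph{along each Lipschitz path} by a union bound over the (exponentially many) paths, and observe correctly that the sparsity indicators along a path are correlated so that Chernoff does not apply directly. But that route is both unnecessary and hard. The paper's proof of Proposition~\ref{prop:enoughparticles} avoids it entirely by taking $v_{\hat k}=1$ rather than $1-\varepsilon_0$: with this choice, the trigger event occurs only if \emph{some} site $(y,n)\in B_{L_{\hat k}}$ has $N(y,n)<K$, because otherwise $\chi^g_\sigma = 1$ for every crossing. One then union-bounds over the polynomially many sites in the box (not paths), and each term is $\P^{\hat\rho}(N(0,0)<K)\le K(\hat\rho\vee 1)^K e^{-\hat\rho}$, which is made smaller than $\exp\{-(\log L_{\hat k})^{3/2}\}/(5\mathfrak{R}L_{\hat k})^dL_{\hat k}$ simply by taking $\hat\rho$ large. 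No correlation control, no Chernoff, no path-dependence. As written your triggering step is not carried out, and the plan you sketch for it is the wrong one.

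Two smaller remarks. First, your ``density to speed'' step applies the good event at each scale $n$ and union-bounds the annealed error probabilities over $n$; the paper instead isolates a fully quenched deviation estimate (Proposition~\ref{prop:quencheddev}), formulated over the restricted path class $\mathfrak{S}^{v,L}$ so that the union bound over times becomes part of the quenched Azuma argument. Both work for Theorem~\ref{thm:ballisticity}, but the paper's formulation is reused later (in the regeneration estimates of Section~\ref{s:regmanyRWshighdim}), so the separation pays off. Second, when you write that the quenched local drift is $\ge (v_\bullet-\delta)\mathbbm{1}\{N\ge K\}-\mathfrak{R}\mathbbm{1}\{N<K\}$, make sure $K$ is chosen large enough for this to follow from Assumption~(D) and to match the $k_\star$ used in the quenched estimate, as the paper does at \eqref{e:largedriftafterkstar}.
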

Theorem~\ref{thm:ballisticity} will be proved by means of two propositions stated and proved below.
Both the theorem and these intermediate results will be crucial to control
the tail of the regeneration time constructed in Section~\ref{s:regmanyRWshighdim}.

The next proposition is very intuitive, stating that if the density is high enough then all paths stay most of their time on points with a large number of particles.

\begin{proposition}[Uniform density control along paths]
  \label{prop:enoughparticles}
  For all $K \in \N$ and $\varepsilon \in (0,1)$, there exists $c > 0$ and $\rho(K,\varepsilon) \in (0,\infty)$ such that, for all $\rho \ge \rho(K, \varepsilon)$,
\begin{equation}\label{e:enoughparticles}
\P^\rho \Big( \exists\, \ell \ge L/(2 \mathfrak{R}),  \sigma \in \mathfrak{S} \colon\, \sum_{i=0}^{\ell-1} \mathbbm{1}_{\{N(\sigma(i), i) \ge K \}} < (1-\varepsilon) \ell \Big) \le c^{-1} e^{-c (\log L)^{3/2}}
  \end{equation}
  for all $L \geq 1$.
\end{proposition}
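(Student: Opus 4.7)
The plan is to apply Corollary~\ref{cor:renormnoscales} with $g := \mathbbm{1}_{\{N(0,0) \ge K\}}$, $H \equiv 1$, and $\gamma = 3/2$. With this choice $g$ is instantaneous and local, so the event in \eqref{e:H_g_non_decr} is measurable in $\sigma(N(y), U_y : y \in B_{0,L})$. Since adding walkers only increases $N$ pointwise, $g$ is non-decreasing in $\omega$, hence $\chi_\sigma^g$ is non-decreasing and the event in \eqref{e:H_g_non_decr} is non-increasing, placing us in the large-density branch of the corollary.

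Next I would calibrate the triggering scale. Given $K$ and $\varepsilon$, pick $\hat{k} \ge k_o$ so large that $\prod_{k \ge \hat{k}}(1 - L_k^{-1/16}) \ge 1 - \varepsilon/4$, and set $v(L_{\hat{k}}) := 1 - \varepsilon/4$; then $v_\infty \ge 1 - \varepsilon/2$ by \eqref{e:defvinfty}. To verify \eqref{e:H_g_upper_bound}, observe that on $E_{\hat{k}} := \{N(y) \ge K \text{ for every } y \in B_{0, L_{\hat{k}}}\}$ every $(0, L_{\hat{k}}, H)$-crossing $\sigma$ satisfies $\chi_\sigma^g = 1 > v(L_{\hat{k}})$. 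Since each $N(y)$ is marginally Poisson$(\rho)$, a union bound yields
\[
\P^\rho(E_{\hat{k}}^c) \le c\, L_{\hat{k}}^{d+1}\, \P^\rho(N(0,0) < K) \le c'\, L_{\hat{k}}^{d+1}\, \rho^{K-1} e^{-\rho},
\]
which falls below $\exp(-(\log L_{\hat{k}})^{3/2})$ once $\rho(L_{\hat{k}})$ is taken sufficiently large (depending on $K$ and $L_{\hat{k}}$). The values of $v(L), \rho(L)$ at other scales may be set to any fixed constants respecting $v(L)\wedge \rho(L) \ge L^{-1/16}$, since they do not enter the hypothesis.

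The corollary then produces $\rho_\infty > 0$ and $c > 0$ such that, for every $\rho \ge \rho_\infty$ and every $\ell \in \N$,
\[
\P^\rho\bigl(\exists \text{ a } (0, \ell, H)\text{-crossing } \sigma \text{ with } \chi_\sigma^g \le 1 - \varepsilon\bigr) \le c^{-1} e^{-c(\log \ell)^{3/2}},
\]
where I used $v_\infty - \varepsilon/2 \ge 1 - \varepsilon$. Any $\sigma \in \mathfrak{S}$ restricted to $\{0,\dots,\ell-1\}$ is a $(0, \ell, H)$-crossing (as $0 \in I_{0,\ell}$ and $H \equiv 1$), so the display above controls the event in \eqref{e:enoughparticles} for each fixed $\ell$. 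A union bound over $\ell \ge L/(2\mathfrak{R})$ concludes the proof: since $e^{-c(\log \ell)^{3/2}}$ decays faster than any polynomial, $\sum_{\ell \ge L/(2\mathfrak{R})} e^{-c(\log \ell)^{3/2}}$ is of the same order as its first term, giving the bound $c^{-1} e^{-c(\log L)^{3/2}}$.

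The main technical point is the parameter balancing at the triggering scale: $\hat{k}$ must simultaneously satisfy $\hat{k} \ge k_o$ and be large enough to make $\prod_{k \ge \hat{k}}(1 - L_k^{-1/16})$ close to $1$, and then $\rho(L_{\hat{k}})$ must be pushed above roughly $(\log L_{\hat{k}})^{3/2}$ so that the crude union-bound estimate on $E_{\hat{k}}^c$ meets the renormalization threshold. After these choices, the argument is a direct application of the machinery of Section~\ref{s:renormalization}, with the monotonicity/measurability hypotheses being essentially free and the final aggregation in $\ell$ a routine summation.
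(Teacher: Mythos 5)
Your proposal is correct and follows essentially the same route as the paper: the same choice $g=\mathbbm{1}_{\{N(0,0)\ge K\}}$, $H\equiv 1$, a crude union-bound triggering estimate over $B_{0,L_{\hat{k}}}$ that is beaten by taking $\rho$ large at the fixed scale $L_{\hat{k}}$, and then the renormalization/interpolation machinery followed by a union bound over $\ell$. The only cosmetic difference is that you invoke Corollary~\ref{cor:renormnoscales} (which packages Theorem~\ref{t:pk_decay} and Proposition~\ref{p:interpolate}) and start from $v(L_{\hat{k}})=1-\varepsilon/4$, whereas the paper applies those two results directly with $v_{\hat{k}}=1$.
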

\begin{proof}
Take $k_o$ as in the statement of Theorem~\ref{t:pk_decay} for $\gamma = 3/2$,
and let $\hat{k} \ge k_o$ be large enough such that $\prod_{k\geq \hat{k}}(1-L_k^{-1/16})\geq 1-\varepsilon/2$.
Choose $v_{\hat{k}}:=1$, $g: =\mathbbm{1}_{\{N(0,0) \ge K \}}$, $H\equiv 1$ (thus, we will say only $m$-crossing instead of $(m,H)$-crossing).
Define the family $(A_m)_{m \in M_{\geq \hat{k}}}$ as in \eqref{e:A_m_with_chi} and note that it is adapted and that each $A_m$ is non-increasing.
For a fixed $\hat{\rho}>0$, consider the crude bound
\begin{eqnarray}\label{e:prpropenoughpart1}
p_{\hat{k}}(\hat{\rho})&\leq &\P^{\hat{\rho}}\left(\exists (y,n)\in B_{L_{\hat{k}}} \text{ such that }N(y,n)<K\right) \nonumber\\
&\leq& (5 \mathfrak{R} L_{\hat{k}})^dL_{\hat{k}} \P^{\hat{\rho}}(N(0,0)<K)\leq  (5 \mathfrak{R} L_{\hat{k}})^dL_{\hat{k}} K (\hat{\rho} \vee 1)^K e^{-\hat{\rho}}.
\end{eqnarray}
For fixed $K,\hat{k}$, we can choose $\hat{\rho} \ge L^{-1/16}_{\hat{k}}$ such that the right-hand side of \eqref{e:prpropenoughpart1}
is less than $\exp(-(\log L_{\hat{k}})^{3/2})$.
Therefore, by Theorem~\ref{t:pk_decay} and Proposition~\ref{p:interpolate},
there exists $c>0$ such that, for all $\rho \ge \rho(K,\varepsilon) := \iota_{\hat{k}} \hat{\rho}$ (with $\iota_{\hat{k}}$ as in \eqref{e:defiota}) and all $\ell \ge 1$,
\begin{equation*}
\P^\rho \Big( \text{there exists a $(0,\ell)$-crossing $\sigma$ with } \sum_{i=0}^{\ell-1} \mathbbm{1}_{\{N(\sigma(i), i) \ge K \}} < (1-\varepsilon) \ell \Big)\leq c^{-1} e^{-c(\log \ell)^{3/2}}
\end{equation*}
The proposition follows by noticing that the first $\ell$ steps of any $\sigma\in\mathfrak{S}$ form a $(0,\ell)$-crossing,
and then applying a union bound.
\end{proof}

Our second proposition is a quenched deviation estimate for the position of the random walk.
Intuitively speaking, it says that if all paths spend a large proportion of their time in sites with many particles, then the random walker itself has to move ballistically.

For technical reasons we first have to restrict our attention to the collection of paths that behave well in a certain sense.
For $L \in \N$ and $v \in (0,\mathfrak{R}]$, let $\mathfrak{S}^{v, L}$ be those paths of $\mathfrak{S}$ that never touch $\cH_{v, L}$.
More precisely
\begin{equation}\label{e:deffrakSLv}
\mathfrak{S}^{v,L} := \left\{ \sigma \in \mathfrak{S} \colon\, (\sigma(i),i) \notin \cH_{v, L} \, \forall \, i \in \Z_+ \right\}.
\end{equation}
For $K \in \N$, $\varepsilon>0$ and $y \in \Z^d \times \Z$, let
\begin{equation}\label{e:defcA}
\cA^{L, v, K, \varepsilon}_y := \bigg\{ \exists \, \ell \ge L/(2 \mathfrak{R}), \sigma \in \mathfrak{S}^{v, L} \colon\, \sum_{i=0}^{\ell-1} \mathbbm{1}_{\{N\left(y+(\sigma(i),i)\right) \ge K \}} < (1-\varepsilon) \ell \bigg\}.
\end{equation}
\begin{proposition}[Quenched deviation estimate]
\label{prop:quencheddev}
For all $v_\star \in (0,v_\bullet)$, there exist $k_\star \in \N$,
$\varepsilon_\star \in (0,1)$ and $c > 0$ such that,
for all $\rho \in (0,\infty)$, $y \in \Z^d \times \Z$, $v \le v_\star$ and $K \ge k_\star$,
$\P^\rho$-almost surely, if the event $\cA^{L, v, K,\varepsilon_\star}_y$ does not occur then
\begin{equation}\label{e:quencheddev}
\P^\rho \big( \exists \, \ell \ge 0 \colon\, Y^y_\ell - y \in \cH_{v, L}  \,\big|\, \omega \big) \le c^{-1} e^{-c L}.
\end{equation}
\end{proposition}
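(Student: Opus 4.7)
The plan is to establish \eqref{e:quencheddev} via a quenched exponential-Chebyshev argument, in which the failure of $\cA_y^{L,v,K,\varepsilon_\star}$ is used only to produce a positive average local drift of the walker along its own trajectory up to the first visit to $\cH_{v,L}$.

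\emph{Parameter choice.} Fix $v_\star \in (0,v_\bullet)$, set $\delta := (v_\bullet - v_\star)/3$, and use Assumption (D) to choose $k_\star \in \N$ such that $\sum_{x \in \cS} \alpha(k,x)\,x \cdot e_1 \ge v_\bullet - \delta$ for every $k \ge k_\star$. Then pick $\varepsilon_\star \in (0,1)$ small enough that
\[
\eta := (1-\varepsilon_\star)(v_\bullet - \delta) - \varepsilon_\star \mathfrak{R} - v_\star > 0,
\]
which is possible because the right-hand side tends to $2\delta > 0$ as $\varepsilon_\star \to 0$; these choices depend only on $\alpha$ and $v_\star$.

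\emph{From $\cA^c$ to drift along the walker.} Let $T := \inf\{\ell \ge 0 \colon Y^y_\ell - y \in \cH_{v,L}\}$. On $\{T < \infty\}$, combining $|X^y_T - x| \le \mathfrak{R} T$ with $(X^y_T - x)\cdot e_1 \le -L + vT$ forces $T \ge L/(2\mathfrak{R})$. Since $v \le v_\star < v_\bullet \le \mathfrak{R}$, the prefix $(X^y_j - x, j)_{j=0}^{T-1}$, which by minimality of $T$ stays outside $\cH_{v,L}$, extends to an infinite $\mathfrak{R}$-Lipschitz path $\tilde \sigma \in \mathfrak{S}^{v,L}$ by appending the step $\mathfrak{R} e_1$ at each subsequent time (each such step moves in direction $e_1$ faster than $v$ and hence outruns $\cH_{v,L}$). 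Applying the definition of $\cA_y^{L,v,k_\star,\varepsilon_\star}$ to $\tilde\sigma$ at $\ell = T$, and writing $D_j := \sum_{x \in \cS} \alpha(N(Y^y_j), x)\,x\cdot e_1$, which is $\ge v_\bullet - \delta$ whenever $N(Y^y_j) \ge k_\star$ and $\ge -\mathfrak{R}$ otherwise, we deduce that on $\cA_y^{L,v,k_\star,\varepsilon_\star\,c} \cap \{T < \infty\}$
\[
\sum_{j=0}^{T-1} D_j \ge T\bigl[(1-\varepsilon_\star)(v_\bullet - \delta) - \varepsilon_\star \mathfrak{R}\bigr] = T(v_\star + \eta) \ge T(v + \eta).
\]

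\emph{Exponential supermartingale and conclusion.} For $\lambda > 0$ and fixed $\omega$, define on the quenched filtration $\mathcal{F}_\ell := \sigma(X^y_0,\ldots,X^y_\ell)$
\[
M_\ell := \exp\Bigl(-\lambda (X^y_\ell - x)\cdot e_1 + \lambda \sum_{j=0}^{\ell-1} D_j - \ell \lambda^2 \mathfrak{R}^2/2\Bigr).
\]
Since the increment $(X^y_\ell - X^y_{\ell-1})\cdot e_1$ has conditional mean $D_{\ell-1}$ and lies in $[-\mathfrak{R},\mathfrak{R}]$, Hoeffding's lemma yields $\E^\rho[M_\ell / M_{\ell-1} \mid \mathcal{F}_{\ell-1}, \omega] \le 1$, so $(M_\ell)$ is a nonnegative quenched supermartingale. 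On $\cA^c \cap \{T < \infty\}$ the previous drift bound gives $M_T \ge \exp\bigl(\lambda L + T(\lambda \eta - \lambda^2 \mathfrak{R}^2/2)\bigr)$, and the choice $\lambda := \eta / \mathfrak{R}^2$ makes the coefficient of $T$ equal to $\eta^2/(2\mathfrak{R}^2) \ge 0$, so $M_T \ge e^{\lambda L}$ there. Optional stopping applied to $(M_{T \wedge n})_n$, combined with Fatou, yields $\E^\rho[M_T \mathbbm{1}_{\{T<\infty\}} \mid \omega] \le 1$; Markov's inequality then gives $\P^\rho(T < \infty \mid \omega) \le e^{-\lambda L}$ on $\cA^c$, which is \eqref{e:quencheddev} with $c := \eta/\mathfrak{R}^2$. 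The one slightly delicate step is the path-extension argument that embeds the realized walker prefix into $\mathfrak{S}^{v,L}$ so that $\cA^c$ can be invoked; this relies crucially on $v < v_\bullet \le \mathfrak{R}$, and once in place the rest is a textbook Hoeffding-plus-optional-stopping estimate.
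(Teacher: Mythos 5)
Your proof is correct, and its structure mirrors the paper's: fix $k_\star,\varepsilon_\star$ via Assumption (D) so that the local drift is uniformly close to $v_\bullet$ on sites with many particles, extend the realized walker prefix up to the first entry time $T$ into $\cH_{v,L}$ to a path in $\mathfrak{S}^{v,L}$ (so that the definition of $\cA_y^{L,v,K,\varepsilon_\star}$ can be invoked, noting $K\ge k_\star$), deduce a positive accumulated drift along the walker's own trajectory, and finish with a concentration estimate. The one place where you genuinely diverge from the paper is the concentration step: the paper introduces the centered zero-mean martingale $M_\ell=(X^y_\ell-x)\cdot e_1-D^\omega_\ell$, applies Azuma's inequality at each level $\ell\ge L/(2\mathfrak{R})$, and sums via a union bound; you instead build the exponential supermartingale $\exp(-\lambda(X^y_\ell-x)\cdot e_1+\lambda\sum_{j<\ell}D_j-\ell\lambda^2\mathfrak{R}^2/2)$, verify the supermartingale property with Hoeffding's lemma, and close with optional stopping and Markov. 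Your route is slightly cleaner (no union bound, and the optimization over $\lambda$ is explicit), while the paper's is more modular in that the Azuma bound \eqref{e:applyAzuma} is stated separately from the path-extension inclusion \eqref{e:lastargquencheddev}. Both give the claimed exponential decay with constants depending only on $v_\star$, $\alpha$ and $\mathfrak{R}$, independent of $\rho$, $y$, $v\le v_\star$ and $K\ge k_\star$, as required. A minor point of bookkeeping: you invoke $(\cA_y^{L,v,k_\star,\varepsilon_\star})^c$ but the hypothesis gives $(\cA_y^{L,v,K,\varepsilon_\star})^c$ for general $K\ge k_\star$; this is harmless because $\{N\ge K\}\subset\{N\ge k_\star\}$, so the non-occurrence of $\cA$ at level $K$ still delivers the drift bound $D_j\ge v_\bullet-\delta$ on at least a $(1-\varepsilon_\star)$ fraction of steps, but it is worth saying explicitly.
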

\begin{proof}
Let $y = (x,n) \in \Z^d \times \Z$ be fixed.
Fix $\delta_\star \in (0,1)$ satisfying $v_\star+ 2 \delta_\star < v_\bullet$.
By Assumption (D), there exists a $k_\star \in \N$ such that
\begin{equation}\label{e:largedriftafterkstar}
\inf_{k \ge k_\star} \sum_{z \in \cS} \alpha(k,z) z \cdot e_1 > v_\star + 2 \delta_\star.
\end{equation}
Take $\varepsilon_\star \in (0,1)$ small enough such that $2 (\mathfrak{R} +1) \varepsilon_\star < \delta_\star$,
and fix $K\ge k_\star$, $v \le v_\star$.

For $(z,l) \in \Z^d \times \Z$, let
\begin{equation}\label{e:deflocaldrift}
d^\omega(z,l) := \E^\rho \left[ (X^{(z,l)}_1 - z)\cdot e_1 \;\middle| \; \omega \right] = \sum_{u \in \cS} u \cdot e_1 \, \alpha(N(z,l),u)
\end{equation}
denote the quenched local drift in direction $e_1$ at the point $(z,l)$.
For $\sigma \in \mathfrak{S}$, let
\begin{equation}\label{e:deftotaldrift}
D^\omega_\ell(\sigma) := \sum_{k=0}^{\ell-1} d^\omega(y+(\sigma(k),k))
\end{equation}
be the total drift accumulated along the path that starts at $y$ and has increments given by $\sigma$ up to time $\ell \in \N$.
When $\sigma = X^y-x$ we omit it and write $D^\omega_\ell$.
On $(\cA_y^{L,v,K,\varepsilon_\star})^c$, for all $\ell \ge L/(2 \mathfrak{R})$ and all $\sigma \in \mathfrak{S}^{v,L}$,
\begin{align}\label{e:propdrift}
D^{\omega}_\ell(\sigma)
& \ge (v_\star + 2 \delta_\star) \sum_{k=0}^{\ell-1} \mathbbm{1}_{\{N(y+(\sigma(k),k)) \ge K \}} - \mathfrak{R} \sum_{k=0}^{\ell-1} \mathbbm{1}_{\{N(y+(\sigma(k),k)) < K \}} \nonumber\\
& \ge \left[v_\star + 2 \delta_\star - \varepsilon_\star (v_\star + \mathfrak{R} + 2 \delta_\star) \right] \ell \nonumber\\
& > (v_\star+\delta_\star) \ell \ge (v+\delta_\star) \ell.
\end{align}
by our choice of $\delta_\star$, $k_\star$ and $\varepsilon_\star$.

Note that, under $\P^\rho(\cdot \,| \omega)$, the process
\begin{equation}\label{e:defmartingale}
\begin{aligned}
M_\ell & := (X^y_\ell -x) \cdot e_1 - D^\omega_\ell \\
& = \sum_{k=0}^{\ell-1} (X^y_{k+1}-X^y_k) \cdot e_1 - \E^\rho \left[ (X^y_{k+1}-X^y_k) \cdot e_1 \;\middle|\; X^y_0, \ldots, X^y_k, \omega \right]
\end{aligned}
\end{equation}
is a zero-mean martingale with respect to the filtration $\sigma(X^y_0, \ldots, X^y_\ell)$,
and has increments bounded by $2 \mathfrak{R}$.
Therefore, by Azuma's inequality and a union bound, there exists a $c>0$ such that
\begin{equation}\label{e:applyAzuma}
\P^\rho \left( \exists\, \ell \ge L/(2 \mathfrak{R}) \colon\, |M_\ell| \ge \delta_\star \ell \;\middle|\; \omega \right) \le c^{-1} e^{-c L} \quad \forall \; L \in \N.
\end{equation}

Now we argue that, on $(\cA_y^{L,v, K, \varepsilon_\star})^c$,
\begin{equation}\label{e:lastargquencheddev}
\left\{\exists \, \ell \ge 0 \colon\, Y^y_\ell - y \in \cH_{v, L} \right\} \subset \left\{\exists \, \ell \ge L/(2 \mathfrak{R}) \colon\, |M_\ell| \ge \delta_\star \ell \right\}.
\end{equation}
Indeed, let $\ell_0 \in \N$ be the smallest time satisfying $Y^y_{\ell_0} - y \in \cH_{v, L}$. Then $\ell_0 \ge L/(
\mathfrak{R} + v) \ge L/(2\mathfrak{R})$.
Setting $\sigma = X^y - x$ up to time $\ell_0-1$ and equal to an arbitrary $\mathfrak{R}$-Lipschitz path that does not touch $\cH_{v,L}$ for times greater than $\ell_0$, then $\sigma \in \mathfrak{S}^{v, L}$ and we obtain by \eqref{e:propdrift} that, on $(\cA_y^{L,v, K, \varepsilon_\star})^c$,
$D^\omega_{\ell_0} \ge (v + \delta_\star) \ell_0$.
If additionally $|M_{\ell_0}| < \delta_\star \ell_0$ would hold, then we would have a contradiction since
\begin{equation}\label{e:contradiction}
(X^y_{\ell_0}-x) \cdot e_1 \ge D^\omega_{\ell_0}(\sigma) - |M_{\ell_0}| > v \ell_0 \;\; \Rightarrow \;\; Y^y_{\ell_0} - y \notin \cH_{v, L}.
\end{equation}
This shows \eqref{e:lastargquencheddev}, and the conclusion follows by \eqref{e:applyAzuma}.
\end{proof}

Propositions~\ref{prop:enoughparticles}--\ref{prop:quencheddev} imply the ballisticity condition \eqref{e:LD} as follows.
\begin{proof}[Proof of Theorem~\ref{thm:ballisticity}]
For $v_\star \in (0, v_\bullet)$, fix $k_\star \in \N$, $\varepsilon_\star>0$ as in Proposition~\ref{prop:quencheddev}
and set $\rho_\star := \rho(k_\star, \varepsilon_\star)$ as in Proposition~\ref{prop:enoughparticles}.
The theorem follows by noting that
\begin{equation}\label{e:proofthmballisticity}
\cA_y^{L,v,K,\varepsilon} \subset \left\{ \exists\, \ell \ge L/(2 \mathfrak{R}), \sigma \in \mathfrak{S} \colon\,
\sum_{k=1}^{\ell-1} \mathbbm{1}_{\{N(y+ (\sigma(k),k) ) \ge K \}} < (1-\varepsilon) \ell \right\}
\end{equation}
and that the probability of the right-hand side of \eqref{e:proofthmballisticity} does not depend on $y$.
\end{proof}


\subsection{Infection}
\label{ss:infection}

In this subsection,
we prove Proposition~\ref{p:infection} regarding the front of the infection process described in the introduction.
We start with a precise construction of the model.

Fix $\rho > 0$, $d=1$ and let $N(z,0)$ and $S^{z,i}$ be as in Section~\ref{s:construction}, i.e., $(N(z,0))_{z \in \Z}$ are i.i.d.\ Poisson($\rho$)
random variables and $(S^{z,i}-z)_{z \in \Z, i \in \N}$ are i.i.d., each distributed as a double-sided simple symmetric random walk on $\Z$ started at $0$.

We also introduce random variables $\eta(z,i,n) \in \{0,1\}$ to indicate whether the particle corresponding to $S^{z,i}$
is healthy ($\eta(z,i,n)=0$) or infected ($\eta(z,i,n)=1$) at time $n$.
We will define them recursively as follows.
Set the initial configuration to be
\begin{align}
  & \eta(z,i,0) = 1 \quad \text{ if } z \leq 0 \text{ and } i \leq N(z,0),\\
  & \eta(z,i,0) = 0 \quad \text{ otherwise.}
\end{align}
Supposing that, for some $n \ge 0$, $\eta(z,i,n)$ is defined for all $z \in \mathbb{Z}$, $i \in \N$, we set
\begin{equation}\label{e:defeta}
\eta(z,i,n+1) = \left\{
\begin{array}{ll}
1 & \begin{array}{l}
\text{if } i \le N(z,0) \text{ and there exists} \\  z' \in \Z, i' \in \N \text{ with } \eta(z',i',n) = 1, S^{z',i'}_n = S^{z,i}_n,
 \end{array}\\
0 & \text{ otherwise.}
\end{array}\right.
\end{equation}
This definition means that,
whenever a collection of particles share the same site at time $n$,
if one of them is infected then they will all become infected at time $n + 1$.

We are interested in the process $\bar{X} = (\bar{X}_n)_{n \in \Z_+}$ defined by
\begin{equation}\label{e:defbarX}
  \bar{X}_n = \max\{S^{z,i}_n \colon\, \eta(z,i,n) = 1\},
\end{equation}
i.e., $\bar{X}_n$ is the rightmost infected particle at time $n$.
We call $\bar{X}$ the \emph{front of the infection}.

Note that the process $\eta$ differs slightly from that described in the introduction,
where particles sharing a site with an infected one were required to become immediately infected.
However, it is easy to check that the process $\bar{X}$ is not affected by this difference,
and we choose to work with $\eta$ for simplicity.

Our first result towards Proposition~\ref{p:infection} is a reduction step,
stating that it suffices to find, with high probability, enough times $n$ when the front
$\bar{X}_n$ of the infection process is close to another particle.
For this we fix $r \geq 0$ and define $g_r$ by
\begin{equation}\label{e:defgr}
  g_r = \mathbbm{1}_{ \big\{ \textstyle{\sum_{x \in [-r,r] \cap (2 \Z)}} N(x,0) \ge 2 \big\}},
\end{equation}
that is, $g_r$ is the indicator function of the event that, at time zero,
there are at least two particles at even sites within distance $r$ from the origin.
Our lemma reads as follows.

\begin{lemma}
\label{l:infection_reduction}
Fix $\rho > 0$ and $r \geq 0$ and suppose that, for some $h \in (0,1)$, $c > 0$,
\begin{equation}
\label{e:always_close}
\mathbb{P}^{\rho}
\Big(
\begin{array}{c}
\chi^{g_r}_\sigma \geq h \text{ for every $1$-Lipschitz path}\\
\text{$\sigma:\{0,\dots,L\} \to \mathbb{Z}$ with $|\sigma(0)| < L$}
\end{array}
\Big) \geq 1 - c^{-1} e^{ -c (\log L)^{3/2} } \;\;\; \forall\, L \ge 1,
\end{equation}
where $g_r$ is as in \eqref{e:defgr} and $\chi^{g_r}_\sigma$ as in \eqref{e:chi_g}.
Then \eqref{e:infection} holds for some some $v, c > 0$.
\end{lemma}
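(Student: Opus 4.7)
The plan is to show that $\bar X$ is ballistic by combining a deterministic SSRW lower bound with a positive density of drift-injecting events provided by \eqref{e:always_close}.

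First, I would observe that $\bar X$ is $1$-Lipschitz. Let $P_n \subset \Z$ denote the set of positions of particles infected at time $n$. By the dynamics \eqref{e:defeta}, every particle infected at time $n+1$ either already belonged to $P_n$ (and has moved by at most $1$) or shared a site at time $n$ with one that did; hence $\max P_{n+1} \le \max P_n + 1$. The reverse inequality $\bar X_{n+1} \ge \bar X_n - 1$ follows because any particle located at $\bar X_n$ at time $n$ remains infected at time $n+1$ and moves by at most $1$. Consequently the trajectory $(\bar X_n)_n$ is itself a valid $1$-Lipschitz path starting within $L$ of the origin, so on the event in \eqref{e:always_close} the set $\mathcal{G}_L := \{0 \le t < L : g_r(\theta_{(\bar X_t, t)}) = 1\}$ has cardinality at least $h L$. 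In addition, tracking any single initially-infected particle such as $S^{0,1}$ yields $\bar X_t \ge S^{0,1}_t$ almost surely, which controls downward fluctuations of $\bar X$ at scale $O(\sqrt t)$ via Azuma.

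Next I would extract a linear drift from the good times in $\mathcal{G}_L$. At each $t \in \mathcal{G}_L$, the definition of $g_r$ produces a particle at distance at most $r$ from $\bar X_t$ whose parity at time $t$ is compatible with that of $\bar X_t$. Fixing a block length $T_0 = T_0(r)$ large enough, I would show that, conditionally on the history at time $t$, with probability at least some $p_0 = p_0(r) > 0$ the following happens within $[t, t + T_0)$: the nearby particle reaches the tracked front particle in $O(r^2)$ steps (thereby becoming infected), and from the meeting point onwards the two particles evolve as independent SSRWs whose maximum has expected gain of order $\sqrt{T_0}$ over the remainder of the block. Choosing $T_0$ so that this gain exceeds a fixed $\delta > 0$, partitioning $[0, L]$ into disjoint blocks of length $T_0$, calling a block \emph{good} when its left endpoint lies in $\mathcal{G}_L$, and applying Azuma's inequality to the per-block increments of $\bar X$ (bounded below on bad blocks by SSRW increments from the previous step), one obtains $\bar X_L \ge v L$ for $v := p_0 \delta h/(2 T_0) > 0$, with the failure probability at most $c^{-1} \exp\{-c (\log L)^{3/2}\}$ inherited from \eqref{e:always_close} up to exponentially small concentration corrections.

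The main obstacle will be justifying the uniform-in-history boost probability $p_0$, because the configuration of particles near $\bar X_t$ at a good time $t$ depends on the entire past of the system. I would handle this by working with the joint Markov chain $(\bar X_t, \text{environment near } \bar X_t)$ and applying its strong Markov property at block boundaries, possibly sub-sampling to every other block in order to obtain the requisite approximate independence between consecutive blocks.
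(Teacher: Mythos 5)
Your outline follows the same route as the paper: the front $\bar X$ is $1$-Lipschitz, so on the event in \eqref{e:always_close} it is a valid path with at least $hL$ ``good'' times; at good times a nearby particle supplies a drift, and at all times $\bar X$ dominates a simple symmetric random walk; combining these gives ballisticity. The paper also partitions good times into well-separated epochs of length $r+1$, during which the increment stochastically dominates (given the full past $\sigma((N(\cdot,\ell))_{\ell\le n'})$) an i.i.d.\ bounded variable $\zeta$ with $E[\zeta]>0$, and concludes by standard large deviations for the $\zeta_i$'s and for the background SSRW. So the structure you propose is essentially the paper's.

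There are, however, two concrete issues in the execution. First, the claim ``$\bar X_t \ge S^{0,1}_t$ almost surely'' is false: $S^{0,1}$ is defined for every $z$ and $i$, but it is only infected when $N(0,0)\ge 1$, and in general the rightmost \emph{initially infected} particle can start anywhere in $(-\infty,0]$. The paper handles this by first bounding $\P^\rho(\bar X_0<-\sqrt L)\le ce^{-\rho\sqrt L}$ and then using that $\bar X$ dominates the SSRW started from that initially-infected particle (not necessarily at $0$). Second, and more importantly, your proposed remedy for the history-dependence — working with the chain $(\bar X_t,\ \text{environment near }\bar X_t)$ and invoking its strong Markov property — does not work, because this pair is \emph{not} Markov: particles from outside the window can enter it at the next step, so the local environment's transition law depends on the unobserved exterior. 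The paper avoids this by conditioning on the \emph{entire} configuration $(N(\cdot,\ell))_{\ell\le n'}$; then the two relevant walkers evolve as independent SSRWs from their known positions, and the probability that they meet within $r$ steps (distance $\le r$, same parity) is uniformly bounded below, giving the stochastic domination by $\zeta$. This also bypasses your $\sqrt{T_0}$ calculation: there is no need to choose a large block length or track ``the front particle'' (which indeed changes identity over time); once the nearby particle meets any infected particle, the front from then on dominates the max of two independent SSRWs, and the bounded, positive-mean $\zeta$ on blocks of fixed length $r+1$ already delivers the linear drift. With these two repairs your argument would close.
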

\begin{proof}
One can check from the definition of the rightmost infected particle that the increment
$\bar{X}_{n+1} - \bar{X}_n$ always dominates that of a symmetric random walk on $\Z$.
At some steps, however, this increment has a drift to the right, namely when there is more than one particle at $\bar{X}_n$.
The idea of the proof will be to bound the number of times at which such positive drift is observed.

We first note that the front starts close to the origin.
Indeed,
\begin{equation}\label{e:controlfrontat0}
\P^\rho \left( \bar{X}_0 < - \sqrt{L} \right) = \P^{\rho} \left( N(z,0) = 0 \;\forall\; z \in [-\sqrt{L}, 0] \cap \Z \right) \le c e^{-\rho \sqrt{L}}.
\end{equation}

Now, at every time $n'$ at which there is another particle at distance at most $r$ from the front $\bar{X}_{n'}$
at a site with the same parity as $\bar{X}_{n'}$,
we can use the Markov property to see that, with uniformly positive probability,
this additional particle will reach the front within the next $r$ steps.
This means that, if $n'$ is such a time, the increment $\bar{X}_{n' + r + 1} - \bar{X}_{n'}$ stochastically dominates
(under the conditional law given $( N(\cdot, \ell))_{\ell \le n'}$) a random variable $\zeta$
with positive expectation satisfying $|\zeta| \leq r+1$.
We will show that $v = h E[\zeta]/(3(r+1))$ fulfills \eqref{e:infection}.

Consider the $1$-Lipschitz path given by the front $(\bar{X}_{\ell})_{\ell = 0}^{L}$.
Denote by $\mathcal{D}$ the intersection of the event appearing in \eqref{e:always_close} with $\{\bar{X}_0 \ge - \sqrt{L}\}$.
On $\mathcal{D}$, we see that, for at least $\lfloor h L \rfloor$ steps between times zero and $L$,
the front $\bar{X}$ is $r$-close to another particle.
Therefore, the same happens for at least $k_L := \lfloor \lfloor h L \rfloor /(r+1) \rfloor$
steps that are at least $r + 1$ time units apart from each other,
and we can estimate using the Markov property
\begin{equation*}
\P^{\rho}[\bar{X}_{L} < vL] \leq \P^{\rho}(\mathcal{D}^c) + \P^\rho (\zeta_1 + \dots + \zeta_{k_L} < 2v L + \sqrt{L}) + \P^{\rho}(S^{0,1}_{L - (r+1) k_L} < -v L),
\end{equation*}
where the $\zeta_i$'s are i.i.d.\ and distributed as $\zeta$.
Applying standard large deviation estimates to the sum of the $\zeta_i$'s and to $S^{0,1}$, we see that
\begin{equation}
\mathbb{P}^{\rho}(\bar{X}_n < v L) \leq \mathbb{P}^{\rho}(\mathcal{D}^c) + c^{-1} \exp\{-c L\} +  \leq c^{-1} \exp \{-c (\log L)^{3/2}\},
\end{equation}
finishing the proof of the lemma.
\end{proof}

We next present the proof of Proposition~\ref{p:infection}.
In light of Lemma~\ref{l:infection_reduction}, all we need to prove is \eqref{e:always_close},
and for this we will use the renormalization procedure developed in Section~\ref{s:renormalization}.
One might try to obtain \eqref{e:always_close} by direct application of Theorem~\ref{t:pk_decay},
defining the events $A_m$ in a natural way and then taking $r$ large enough.
There is however a serious problem with this approach:
for large values of $r$, the family $A_m$ will no longer be adapted in the sense of Definition~\ref{d:adapted}.
To circumvent this issue, we define intermediate classes of events that will certainly be adapted,
although not necessarily cascading.
The details are carried out next.
\begin{proof}[Proof of Proposition~\ref{p:infection}]
Given $\rho > 0$, let $\hat{\rho} := \iota_0^{-1} \rho$ (cf.\ \eqref{e:defiota}).
Fix $h_0 = 1/2$ and define inductively the sequence $h_k$ by $h_{k+1} = h_k(1 - L_k^{-1/16})$.
Since $h_0 \geq L_0^{-1/16}$, using \eqref{e:Lk} it follows by induction that $h_k \geq L_k^{-1/16}$ for every $k \geq 0$.
Moreover, $h_k$ decreases monotonically to $h_\infty := (2 \iota_0)^{-1} > 0$.
For $m \in M_k$, let
\begin{equation}
A'_m = \Big\{\text{there exists an $m$-crossing $\sigma$ such that $\chi^{g_{L_k}}_\sigma < h_k$} \Big\}.
\end{equation}
In the definition of $A'_m$ we have used the local function $g_{L_k}$,
which means that we are looking for particles on even sites at distance at most $L_k$ from the origin.
Intuitively speaking, this task will become easier and easier to accomplish  as $k$ grows.
This is made precise in the following claim: there exists a $c>0$ such that
\begin{equation}\label{e:A_prime_m_decay}
\P^{\hat{\rho}}(A'_m) \leq c^{-1} \exp \{-c L_k\}, \text{ for every $m \in M_{k}$, $k \ge 0$}.
\end{equation}
Indeed, this follows from a union bound over the points of the box $B_m$ together
with a simple large deviations estimate on the sum of independent Poisson($\hat{\rho}$) random variables.

By \eqref{e:A_prime_m_decay}, there exists a $\hat{k}_o \in \N$ such that
\begin{equation}\label{e:A_prime_m_decay2}
\P^{\hat{\rho}}(A'_m) \leq \exp \{-( \log L_k)^{3/2}\}, \text{ for every $m \in M_{k}$, $k \ge \hat{k}_o$}.
\end{equation}
As mentioned above, the family $A'_m$ may not be cascading, however it is clearly adapted.
We now define another collection that will indeed be cascading.
Let $k_o$ as in the statement of Theorem~\ref{t:pk_decay}
and take $\hat{k} \geq k_o \vee \hat{k}_o$.
Then define, for $k \ge \hat{k}$ and $m \in M_k \subset M_{\geq \hat{k}}$,
\begin{equation}\label{e:A_m_notprime}
A_m = \Big\{\text{there exists an $m$-crossing $\sigma$, such that $\chi^{g_{L_{\hat{k}}}}_\sigma < h_k$} \Big\}.
\end{equation}
Note here that the local function $g_{L_{\hat{k}}}$ is fixed, i.e.\ it does not depend on the scale $k$ associated with $m$.
This allows us to employ Lemma~\ref{l:build_cascade} and conclude that
\begin{display}\label{e:A_m_cascading}
the family $(A_m)_{m \in M_{\geq \hat{k}}}$ is cascading.
\end{display}
Moreover, this collection is adapted and composed of non-increasing events.

In view of \eqref{e:A_prime_m_decay2} and \eqref{e:A_m_cascading},
we have $p_{\hat{k}}(\hat{\rho}):= \sup_{m \in M_{\hat{k}}} \P^{\hat{\rho}} \left( A_m \right) \leq \exp\{ -(\log L_{\hat{k}})^{3/2} \}$
since $A'_m  = A_m$ for $m \in M_{\hat{k}}$.
Applying Theorem~\ref{t:pk_decay} and Proposition~\ref{p:interpolate},
we obtain, since $\rho = \iota_0 \hat{\rho} \ge \iota_{\hat{k}} \hat{\rho}$ and by translation invariance,
\begin{equation}
  \label{e:prinfect_last}
  \begin{split}
    \P^\rho & \bigg(
    \begin{array}{c}
      \text{there exists a $1$-Lipschitz path } \sigma:[0,L) \cap \Z \to \Z \text{ that is either}\\
      \text{a } (0,L) \text{-crossing or a } ((-L,0), L) \text{-crossing with } \chi_\sigma^{g_{L_{\hat{k}}}} <  h_\infty/2
    \end{array}
    \bigg)\\
    & \le c^{-1} e^{- c(\log L)^{3/2}},
\end{split}
\end{equation}
implying \eqref{e:always_close}.
Proposition~\ref{p:infection} then follows from Lemma~\ref{l:infection_reduction}.
\end{proof}


\section{Regeneration: proof of Theorem~\ref{thm:main}}
\label{s:regmanyRWshighdim}\noindent
In this section, we adapt Section 4 of \cite{HHSST14} to our setting using Propositions~\ref{prop:enoughparticles} and \ref{prop:quencheddev}.
Theorem \ref{thm:main} will then follow as a consequence of the resulting renewal structure.

Hereafter, we fix $v_\star \in (0,v_\bullet)$ and take $k_\star \in \N$ and $\epsilon_\star \in (0,1)$ as in Proposition~\ref{prop:quencheddev}.
We then define  $\rho_\star := \rho(k_\star, \epsilon_\star)$ as given by Proposition~\ref{prop:enoughparticles} and Theorem~\ref{thm:ballisticity}.
We will also fix $\rho \geq \rho_\star$ and write $\P := \P^\rho$ from now on.
By Assumption (R) and Proposition~\ref{prop:quencheddev}, we may assume that
\begin{equation}\label{e:defpstar}
p_\star := p_{\bullet}(k_\star) =  \inf_{k \ge k_\star} \alpha(k, x_\bullet) > 0,
\end{equation}
see also \eqref{e:defpbulletk}.

Let
\begin{equation}\label{e:defbarv}
\widehat{v}_\star := v_\star \wedge \tfrac12 \quad \text{ and } \quad \bar{v} := \tfrac13 \widehat{v}_\star. 
\end{equation}
For $y \in \R^d \times \R$, we define the following space-time regions:
\begin{align}
\um(y)  & = y+\um(0,0), \;\, \;\, \um(0,0) := \{(x,n) \in \mathbb{Z}^d \times \Z_+ \colon x \cdot e_1 \geq \bar v n, |x| \le \mathfrak{R} n\},\\
\tres(y) & = y+\tres(0,0), \; \; \tres(0,0) := \{(x,n) \in \mathbb{Z}^d \times \Z_- \colon\, x \cdot e_1 < \bar v n\},
\end{align}
where $\mathfrak{R}$ is as in Assumption~(S).
As in \cite{HHSST14}, we define the sets of trajectories
\begin{equation}
\begin{aligned}
W_{y}^\um &= \text{ trajectories in } W \text{ that intersect $\um(y)$ but not $\tres(y)$},\\
W_{y}^\tres &= \text{ trajectories in } W \text{ that intersect $\tres(y)$ but not $\um(y)$},\\
W_{y}^\treze &= \text{ trajectories in } W \text{ that intersect both $\um(y)$ and $\tres(y)$}.
\end{aligned}
\end{equation}
Note that $W^\um_y$, $W^\tres_y$ and $W^\treze_y$ are disjoint,
and therefore the sigma-algebras
\begin{equation}
\label{e:sigmaalgebrastraj}
\mathcal{G}^{I}_{y} := \sigma \left( \omega(A) \colon\, A \subset W^{I}_{y}, A \in \cW  \right),
I = \um,\tres,\treze,
\end{equation}
are jointly independent under $\P$. Define also the sigma-algebras
\begin{equation}
\label{e:sigmaalgebraunif}
\begin{aligned}
\mathcal{U}^{\um}_{y} & = \sigma \left( U_z \colon\, z \in \um(y) \right),\\
\mathcal{U}^{\tres}_{y} & = \sigma \left( U_{z} \colon\, z \in \tres(y) \right),
\end{aligned}
\end{equation}
and set
\begin{equation}
\label{e:sigmaalgebraFxt}
\mathcal{F}_{y} = \sigma \Big( \mathcal{G}^{\tres}_{y}, \mathcal{G}^{\treze}_{y}, \mathcal{U}^{\tres}_{y} \Big).
\end{equation}
Note that, for two space-time points $y, y' \in \Z^d \times \Z$, 
if $y \in \tres(y')$ then $\cF_{y} \subset \cF_{y'}$.

In order to define the regeneration time, we first need to introduce certain \emph{record times} $(R_k)_{k\in \N}$.
The definition here will be different from the one in \cite{HHSST14}.
To this end, set $R_0 := 0$ and, recursively for $k \in \N_0$,
\begin{align}
\label{e:records}
R_{k+1} & := \inf \left\{n \ge R_k+1 \colon\, \left( X_n -X_{R_k} \right)\cdot e_1 > \bar{v} (n-R_k)  \right\}.
\end{align}
Note that $(X_{R_k+1} - X_{R_k}) \cdot e_1 > 0$ if and only if $R_{k+1} = R_k+1$.

Define now a filtration $(\mathcal{F}_k)_{k \geq 0}$
by setting, for $k \ge 0$,
\begin{equation}
\label{e:filtration2}
\begin{aligned}
\mathcal{F}_k := 
& \Big\{B \in \sigma(\omega,U) \colon\, \\
& \quad \forall \, y \in \Z^d \times \Z, \, \exists \, B_{y} \in \mathcal{F}_{y}
\text{ with } B \cap \{Y_{R_k} = y\} = B_{y} \cap \{Y_{R_k} = y\} \Big\},
\end{aligned}
\end{equation}
i.e., the sigma-algebra generated by $Y_{R_k}$, all $U_z$ with $z \in \tres(Y_{R_k})$
and all $\omega(A)$ such that $A \subset W^\tres_{Y_{R_k}} \cup W^\treze_{Y_{R_k}}$.
In particular, $(Y_i)_{0 \le i \le R_k} \in \mathcal{F}_k$.

Finally we define the event
\begin{equation}
\label{e:Axt}
A^{y} = \big\{Y^{y}_i \in \um(y) \,\,\forall\,i \in {\mathbb{Z}_+} \big\}
\end{equation}
in which the walker started at $y$ remains inside $\um(y)$,
the probability measure
\begin{equation}
\P^{\um} (\cdot) = \P \left(~\cdot~ {\big |}
~\omega\big(W^{\treze}_{0}\big)=0,\, A^{0}\right)
\label{e:pmarrom}
\end{equation}
with corresponding expectation operator $\mathbb{E}^{\um}$, the \emph{regeneration record index}
\begin{equation}
\label{e:regrec}
\mathcal{I} = \inf\Big\{ k \in \N \colon\, \omega\big(W^{\treze}_{Y_{R_k}}\big)= 0,\,
A^{Y_{R_k}} \text{ occurs } \Big\}
\end{equation}
and the \emph{regeneration time}
\begin{equation}
\label{e:regtime}
\tau = R_{\mathcal{I}}.
\end{equation}
The following two theorems are the analogous of Theorems~4.1--4.2 of \cite{HHSST14} in our setting.
\begin{theorem}
\label{t:regeneration}
Almost surely on the event $\{\tau < \infty\}$, the process $(Y_{\tau+i} - Y_\tau)_{i \in\Z_+}$
under either the law $\P (~\cdot \mid \tau, (Y_i)_{0\le i \le \tau})$ or $\P^{\um}
(~\cdot \mid \tau, (Y_i)_{0 \le i \le \tau})$ has the same distribution as that of $(Y_i)_{i \in\Z_+}$
under $\P^\um(\cdot)$.
\end{theorem}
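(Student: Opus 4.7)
The plan is to establish the factorisation
\begin{equation}
\E\bigl[F\bigl((Y_i)_{0 \le i \le \tau}\bigr)\, G\bigl((Y_{\tau+i} - Y_\tau)_{i \ge 0}\bigr)\, \mathbbm{1}_{\tau = n,\, Y_\tau = y}\bigr] = \E^{\um}\bigl[G\bigl((Y_i)_{i \ge 0}\bigr)\bigr] \cdot \E\bigl[F\bigl((Y_i)_{0 \le i \le \tau}\bigr)\, \mathbbm{1}_{\tau = n,\, Y_\tau = y}\bigr]
\end{equation}
for every bounded measurable $F,G$, every $n \in \Z_+$ and every $y \in \Z^d \times \{n\}$; the theorem then follows by summing over $n,y$ and the corresponding statement under $\P^{\um}$ by a minor adaptation. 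Decomposing further via $\{\tau = n,\, Y_\tau = y\} = \bigcup_{k \ge 1} \{R_k = n,\, Y_{R_k} = y,\, \mathcal{I} = k\}$, I would work fibre by fibre.

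The geometric heart of the argument is a cone inclusion for consecutive record points. Iterating the defining inequality of $R_\bullet$ and combining with the range bound $|X_{R_{j+1}} - X_{R_j}| \le \mathfrak{R}(R_{j+1}-R_j)$ yields $Y_{R_k} \in \um(Y_{R_j})$ for every $j \le k$, hence the nested inclusions $\um(Y_{R_k}) \subset \um(Y_{R_j})$ and $\tres(Y_{R_j}) \subset \tres(Y_{R_k})$. From the second inclusion one gets $W^{\treze}_{Y_{R_j}} \subset W^{\tres}_{Y_{R_k}} \cup W^{\treze}_{Y_{R_k}}$, so $\omega(W^{\treze}_{Y_{R_j}})$ is $\cF_{Y_{R_k}}$-measurable for every $j \le k$. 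From the first inclusion, on $\{Y_{R_k} = y\} \cap A^y$ the walker's continuation inside $\um(y)$ automatically lies in every $\um(Y_{R_j})$, so the failure of the $j$-th regeneration attempt collapses to the past-measurable event $\{\omega(W^{\treze}_{Y_{R_j}}) > 0\} \cup \{\exists\, i \in [R_j, R_k] : Y_i \notin \um(Y_{R_j})\}$. Combined with $\{\omega(W^{\treze}_y) = 0\} \in \cG^{\treze}_y \subset \cF_y$, this produces the clean factorisation
\begin{equation}
\{\mathcal{I} = k\} \cap \{R_k = n,\, Y_{R_k} = y\} = \mathcal{D}_{k,y} \cap \{\omega(W^{\treze}_y) = 0\} \cap A^y, \qquad \mathcal{D}_{k,y} \in \cF_y,
\end{equation}
with the full integrand $F((Y_i)_{0 \le i \le n})\,\mathbbm{1}_{\mathcal{D}_{k,y}}\,\mathbbm{1}_{\omega(W^{\treze}_y) = 0}$ itself $\cF_y$-measurable.

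The final ingredient is independence together with translation invariance. On $\{Y_\tau = y\} \cap A^y \cap \{\omega(W^{\treze}_y) = 0\}$ the future $(Y_{\tau+i} - y)_{i \ge 0}$ is a deterministic function of $\omega|_{W^\um_y}$ and $(U_z)_{z \in \um(y)}$, which generate a $\sigma$-algebra independent of $\cF_y$ by disjointness of $W^\um_y$ from $W^\tres_y \cup W^\treze_y$ and of $\um(y)$ from $\tres(y)$; the indicator $\mathbbm{1}_{A^y}$ also lies in this independent piece. Conditioning on $\cF_y$ and invoking $\theta_y(\omega,U) \stackrel{d}{=} (\omega,U)$ then rewrites the conditional expectation of $G\,\mathbbm{1}_{A^y}$ as $\P(A^0,\, \omega(W^{\treze}_0) = 0)\cdot \E^{\um}[G((Y_i)_{i \ge 0})]$, and summing over $k,n,y$ delivers the claim under $\P$. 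The $\P^{\um}$ statement follows from the same computation once one notes that the conditioning event $\{A^0,\, \omega(W^{\treze}_0) = 0\}$ is compatible with the decomposition: $\tres(0) \subset \tres(Y_\tau)$ places $\{\omega(W^{\treze}_0) = 0\}$ inside $\cF_{Y_\tau}$, while $\{A^0\}$ splits, via $\um(Y_\tau) \subset \um(0)$, into the past-measurable event that the walker remains in $\um(0)$ up to time $\tau$ and the piece $A^{Y_\tau}$ living in the independent future $\sigma$-algebra. The subtlest step, and the one I would spend the most care on, is this bookkeeping of $\sigma$-algebras against the abstract definition \eqref{e:filtration2} of $\cF_k$; it is essentially mechanical once the cone inclusions of the second paragraph are in place.
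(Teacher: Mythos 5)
Your proposal is essentially the proof the paper omits (it simply cites Theorem~4.1 of \cite{HHSST14}): the cone inclusions $\um(Y_{R_k}) \subset \um(Y_{R_j})$ and $\tres(Y_{R_j}) \subset \tres(Y_{R_k})$ following from the record telescoping, the consequent $\cF_y$-measurability of the past and of the failure events for $j < k$, and the independence/translation-invariance step for the future, are exactly the ingredients. One minor normalisation slip: on $\{\omega(W^{\treze}_y)=0\}$, the conditional expectation of $G\,\mathbbm{1}_{A^y}$ given $\cF_y$ equals $\P\bigl(A^0 \mid \omega(W^{\treze}_0)=0\bigr)\cdot\E^{\um}[G]$, not $\P\bigl(A^0,\,\omega(W^{\treze}_0)=0\bigr)\cdot\E^{\um}[G]$; the extra factor $\P(\omega(W^{\treze}_0)=0)$ you wrote cancels anyway when you divide back by $\E[\mathbbm{1}_{A^y}\mid\cF_y]$ to reconstruct $\E[F\,\mathbbm{1}_{\tau=n,Y_\tau=y}]$, so the final factorisation is unaffected.
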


\begin{theorem}
\label{t:tailregeneration}
There exists a constant $c > 0$ such that
\begin{equation}
\label{e:tailregeneration}
\E \left[e^{c (\log \tau)^{3/2}} \right] < \infty
\end{equation}
and the same holds with $\mathbb{E}^{\um}$ replacing $\mathbb{E}$.
\end{theorem}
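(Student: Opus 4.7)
The plan is to adapt Section~4 of \cite{HHSST14} to our higher-dimensional setting, with Theorem~\ref{thm:ballisticity} and Propositions~\ref{prop:enoughparticles}--\ref{prop:quencheddev} replacing the one-dimensional inputs used there. The strategy is threefold: (i) prove an exponential tail bound $\P(\mathcal{I} > k) \le (1-p)^k$ for the regeneration record index; (ii) prove that $\P(R_j > t) \le c^{-1} e^{-c(\log t)^{3/2}}$ whenever $j$ is at most linear in $t$; and (iii) combine the two via $\tau = R_{\mathcal{I}}$ at the scale $j = \lceil A (\log t)^{3/2} \rceil$.

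For step (i), the key observation is that $\{\omega(W^\treze_{Y_{R_k}}) = 0\}$ is $\mathcal{F}_k$-measurable, and on this event $A^{Y_{R_k}}$ depends only on $\omega|_{W^\um_{Y_{R_k}}}$ and $U|_{\um(Y_{R_k})}$, which are independent of $\mathcal{F}_k$. By translation invariance, $\P(A^{Y_{R_k}} \mid \mathcal{F}_k, \omega(W^\treze_{Y_{R_k}}) = 0) = \P(A^0 \mid \omega(W^\treze_0) = 0) =: q$, and positivity of $\P(\omega(W^\treze_0) = 0)$ reduces to checking that the Poisson intensity $\mu(W^\treze_0)$ is finite. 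Positivity of $q$ combines Theorem~\ref{thm:ballisticity} with some $v' \in (\bar v, v_\bullet)$ (handling times $i \ge L/(v'-\bar v)$) with an initial-segment argument via Assumption (R) and Proposition~\ref{prop:enoughparticles}: with positive probability, the walker takes the step $x_\bullet$ repeatedly during the initial segment, and since $x_\bullet \cdot e_1 \ge 1 > \bar v$, this also secures $X_i \cdot e_1 \ge \bar v i$ for small $i$. Iterating over $k$ through the filtration $(\mathcal{F}_k)$ then yields the geometric tail of $\mathcal{I}$.

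For step (ii), the definition of record times implies that, between consecutive records, $X \cdot e_1$ grows at rate at most $\bar v$ while each jump is bounded by $\mathfrak{R}$; hence having fewer than $j$ records by time $t$ forces $X_t \cdot e_1 \le \bar v t + j \mathfrak{R}$, which is strictly less than $v' t$ whenever $j < (v' - \bar v) t / \mathfrak{R}$, and Theorem~\ref{thm:ballisticity} provides the desired bound on $\P(X_t \cdot e_1 < v' t)$. In step (iii), using $\P(\tau > t) \le \P(\mathcal{I} > j) + \P(R_j > t)$ with $j = \lceil A (\log t)^{3/2} \rceil$ for a suitable constant $A$ gives $\P(\tau > t) \le c^{-1} e^{-c' (\log t)^{3/2}}$, and summing over $t$ yields $\E[e^{c (\log \tau)^{3/2}}] < \infty$ for $c < c'$. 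The bound under $\P^\um$ is immediate since $\P^\um(\cdot) = \P(\cdot \mid \omega(W^\treze_0) = 0, A^0)$ is merely conditioning on an event of positive probability. The main obstacle lies in making step (i) rigorous: the event $\{\mathcal{I} > k\}$ is not $\mathcal{F}_k$-measurable (it involves the walker's future through $A^{Y_{R_k}}$), so the inductive inequality $\P(\mathcal{I} > k+1 \mid \mathcal{F}_{k+1}, \mathcal{I} > k) \le 1-p$ requires a careful application of the tower property together with the cone inclusion $\um(Y_{R_{k+1}}) \subset \um(Y_{R_k})$ (equivalently $\mathcal{F}_k \subset \mathcal{F}_{k+1}$).
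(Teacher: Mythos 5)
The plan $\tau=R_{\mathcal I}$, the bound on $\P(R_j>t)$ via Theorem~\ref{thm:ballisticity} (step (ii)), and the reduction to $\P$ from $\P^{\um}$ are all sound and broadly parallel the paper's proof. The gap is in step (i): the claimed geometric tail $\P(\mathcal I>k)\le(1-p)^k$ is \emph{not} established by the argument you give, and in fact is not how the paper proceeds. Your argument shows only that $q:=\P\bigl(A^{Y_{R_k}}\mid\mathcal F_k,\ \omega(W^\treze_{Y_{R_k}})=0\bigr)>0$; it never bounds below the probability that the $\mathcal F_k$-measurable event $\{\omega(W^\treze_{Y_{R_k}})=0\}$ itself occurs given $\mathcal F_{k-1}$ (or given $\mathcal I>k-1$). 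This is precisely the hard part: because the environment is conservative, a trajectory of $\omega$ that lies in $W^\treze_{Y_{R_{k-1}}}$ can easily persist into $W^\treze_{Y_{R_k}},W^\treze_{Y_{R_{k+1}}},\ldots$ for a long, $\mathcal F_k$-measurable stretch (indeed the influence field $h(Y_{R_k})$ is $\mathcal F_k$-measurable, so after conditioning on $\mathcal F_k$ there is no room for randomness to rescue you). When $h(Y_{R_k})$ is large, no bounded number of further record times can produce $\omega(W^\treze_{Y_{R_{k'}}})=0$, so a uniform-in-$k$ success probability bounded away from zero does not hold. Your closing remark identifies only the $\mathcal F_k$-measurability of $\{\mathcal I>k\}$ as the obstacle; that is the easy part — the cone inclusion $\um(Y_{R_{k+1}})\subset\um(Y_{R_k})$ does handle it — while the genuine difficulty (controlling $\omega(W^\treze_{Y_{R_k}})$) is left unaddressed.

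The paper's proof replaces your step (i) by the ``good record time'' machinery of Section~\ref{ss:tailregeneration}. A record time $R_k$ is a g.r.t.\ if simultaneously: the \emph{local} influence field $h^T(Y_{R_k})$ is at most $T''=\lfloor\delta\log T\rfloor$ (Lemma~\ref{l:locinfl}, which crucially conditions on $\mathcal F_{k-T'}$ rather than $\mathcal F_k$, precisely to gain independence from the old trajectories); the walker then takes $T''$ forced $x_\bullet$-steps along a densely occupied path (Lemma~\ref{l:probfill} plus \eqref{e:defpstar}), clearing the old trajectories; the ``new'' $\treze$-cone at the endpoint is then filled only by trajectories coming from $W^\um_{Y_{R_k}}$, which are independent of $\mathcal F_k$ and empty with constant probability; and finally the walker does not backtrack (Lemma~\ref{l:no_top}, via FKG and Propositions~\ref{prop:enoughparticles}--\ref{prop:quencheddev}). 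The price for forcing $T''\sim\log T$ consecutive $x_\bullet$-steps is the factor $\hat p^{\,T''}=T^{\delta\log\hat p}$ in \eqref{e:saw_pemba}: the conditional probability of a g.r.t.\ is only polynomially small in $T$, \emph{not} bounded below by a constant. This polynomial rate, combined with the $e^{-c(\log L)^{3/2}}$ ballisticity and influence-field bounds entering the bad events $E_1,E_2$, is exactly what produces the $e^{-c(\log\tau)^{3/2}}$ tail in \eqref{e:tailregeneration}. Had $\mathcal I$ truly had a geometric tail, one would expect an exponential (or stretched-exponential in $t$, not $\log t$) tail for $\tau$; the much weaker statement proved in the theorem is a symptom that step (i) as you state it cannot hold.
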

The proof of Theorem~\ref{t:regeneration} follows exactly as that of Theorem~4.1 in \cite{HHSST14} and thus we omit it here.
Theorem~\ref{t:tailregeneration} will be proved in the next section.
From them follows the:

\begin{proof}[Proof of Theorem~\ref{thm:main}]
Using Theorems~\ref{t:regeneration}--\ref{t:tailregeneration},
one may follow almost word for word the arguments given in Section~4.3 of \cite{HHSST14},
with the difference of having now random vectors instead of real-valued random variables.
In particular, we obtain the formulas
\begin{align}
v & = \frac{\E^\um \left[X_\tau \right]}{\E^\um[\tau]},\label{e:formulav}\\ 
\Sigma_{i,j} & = \frac{\E^\um \left[\left(X_\tau - v \tau \right) \cdot e_i \left(X_\tau - v \tau \right) \cdot e_j \right]}{\E^\um \left[ \tau \right]} \label{e:formulaSigma}
\end{align}
for the velocity $v$ and the covariance matrix $\Sigma$, from which the comments made after Theorem~\ref{thm:main} may be deduced.
The fact that $v \cdot e_1 \ge v_\star$ follows from Theorem~\ref{thm:ballisticity}.
\end{proof}

\subsection{Control of the regeneration time}
\label{ss:tailregeneration}

In this section, we prove Theorem~\ref{t:tailregeneration} by adapting Section~4.2 of \cite{HHSST14} to our setting.
The two most important modifications are as follows.
First, in order to bypass the requirement of uniform ellipticity,
we do not require the random walker to make jumps in a fixed direction independently of the environment
but instead only over points containing enough particles.
For this, we need to estimate the probability of certain joint occupation events, cf.\ Lemma~\ref{l:probfill} below.
Second, we need a substitute for Lemma~4.5 of \cite{HHSST14},
which gave a quenched estimate on the backtrack probability of the random walker and was obtained therein using a monotonicity
property only available in one dimension. This is the role of Lemma~\ref{l:no_top} below, obtained with the help of Propositions~\ref{prop:enoughparticles}--\ref{prop:quencheddev}.
 
In our first lemma, we construct a path for the random walk to follow where all the points have a large number of particles.
This has a cost that is at most exponential.
\begin{lemma}\label{l:probfill}
\newconstant{c:probfill}
There exists $\useconstant{c:probfill} \in (0,1)$ such that,
for all $L \in \N$,
\begin{equation}\label{e:probfill}
\P \left( \omega\left(W_{i x_\bullet, i} \setminus (W^\treze_{L x_\bullet, L} \cup W^\treze_0) \right) \ge k_\star \;\forall\; i=0, \ldots, L-1 \right) \ge \useconstant{c:probfill}^{L}.
\end{equation}
\end{lemma}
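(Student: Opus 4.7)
The strategy is to use the FKG inequality (Proposition~\ref{prop:FKG}) to decouple the constraints across different indices $i$. Setting
\begin{equation*}
G_i := W_{i x_\bullet, i} \setminus \big(W^\treze_0 \cup W^\treze_{L x_\bullet, L}\big),
\end{equation*}
each event $\{\omega(G_i) \ge k_\star\}$ is non-decreasing in $\omega$. Hence Proposition~\ref{prop:FKG} (applied iteratively to the $L$ events) yields
\begin{equation*}
\P\bigg(\bigcap_{i=0}^{L-1} \{\omega(G_i) \ge k_\star\}\bigg) \;\ge\; \prod_{i=0}^{L-1} \P\big(\omega(G_i) \ge k_\star\big).
\end{equation*}
Since $\omega(G_i)$ is $\Poisson(\rho\,\mu(G_i))$-distributed, the lemma will follow with $\useconstant{c:probfill} := \P\big(\Poisson(\rho \delta) \ge k_\star\big) \in (0,1)$ provided a uniform lower bound $\mu(G_i) \ge \delta > 0$ is established for every $i \in \{0,\dots,L-1\}$ and every $L \in \N$.

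\textbf{Reducing $\mu(G_i)$ to cone-hitting probabilities.} Translation invariance of the environment of SRWs gives $\mu(W_{y}) = 1$ for every $y \in \Z^d \times \Z$, and conditionally on passing through $y$, a trajectory under $\mu$ is distributed as a two-sided simple symmetric random walk centered at $y$. It therefore suffices to upper bound
\begin{equation*}
p_{\tres}(i) := \mu\big(W_{i x_\bullet, i} \cap W^\treze_0\big), \qquad p_{\um}(i) := \mu\big(W_{i x_\bullet, i} \cap W^\treze_{L x_\bullet, L}\big),
\end{equation*}
so that $\mu(G_i) \ge 1 - p_{\tres}(i) - p_{\um}(i)$. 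Using $\bar v \le 1/6 < 1 \le x_\bullet \cdot e_1$, a direct check yields $(i x_\bullet, i) \in \um(0,0) \cap \tres(L x_\bullet, L)$ for every $0 \le i < L$. Consequently, for a trajectory passing through $(i x_\bullet, i)$, membership in $W^\treze_0$ is equivalent to hitting $\tres(0,0)$, and membership in $W^\treze_{L x_\bullet, L}$ is equivalent to hitting $\um(L x_\bullet, L)$, since the opposite cone in each pair is automatically crossed.

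\textbf{Main obstacle.} The crux is to prove $p_{\tres}(i) + p_{\um}(i) \le 1 - \delta$ uniformly in $i$ and $L$. In the bulk regime, a Chernoff-type bound applied to the $e_1$-coordinate of the SRW against the cone boundary of slope $\bar v$, combined with the offset $(x_\bullet \cdot e_1 - \bar v) > 0$ between the diagonal path $t \mapsto (t x_\bullet, t)$ and the apex of each cone, yields $p_{\tres}(i) \le C e^{-c i}$ and $p_{\um}(i) \le C e^{-c(L-1-i)}$. This handles all indices with $\min(i, L-1-i)$ large. The genuinely delicate case is at the boundary, say $i = 0$: here $p_{\um}(0) \le e^{-cL}$ remains tiny (it requires an SRW to travel a linear distance in linear time), but $p_{\tres}(0)$ is only bounded away from $1$. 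The plan is to control it via the classical fact from fluctuation theory that the walk $t \mapsto S_t \cdot e_1 + \bar v t$ (the reversed-time SRW coordinate augmented by the cone's drift) has strictly positive drift $\bar v > 0$, and therefore stays non-negative for all $t \ge 1$ with strictly positive probability --- giving $p_{\tres}(0) < 1$. A symmetric argument produces $p_{\um}(L-1) < 1$. Combining these boundary estimates with the exponential decay in the bulk yields the desired uniform $\delta > 0$ for $L$ sufficiently large, while small values of $L$ are absorbed by choosing $\useconstant{c:probfill}$ small enough.
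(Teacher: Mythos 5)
Your proposal takes a genuinely different route from the paper. The paper proceeds by induction on $L$: passing from $L$ to $L+1$ using that the "new" piece $W_{L x_\bullet, L} \setminus (W^\treze_{(L+1) x_\bullet, L+1} \cup W^\treze_{L x_\bullet, L})$ is disjoint (as a set of trajectories) from the pieces already controlled, hence the corresponding Poisson counts are independent; everything then reduces by translation invariance to the single step $L=1$, where positivity of $\mu(W_0 \setminus (W^\treze_{x_\bullet, 1} \cup W^\treze_0))$ is immediate. Your plan replaces this disjointness/independence argument by FKG (Proposition~\ref{prop:FKG}), which is valid — the events $\{\omega(G_i)\ge k_\star\}$ are non-decreasing functions of $\omega$ and FKG iterates to give the product bound. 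The trade-off is that FKG does not reduce to the $L=1$ case: you must instead produce a uniform lower bound $\mu(G_i)\ge\delta>0$ over all $0\le i<L$ and all $L$, which is where essentially all the work lands.

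That uniform bound is where your sketch has a gap. The inequality $\mu(G_i)\ge 1-p_\tres(i)-p_\um(i)$ is too weak at the boundary for small $L$: at $i=0$, $p_\tres(0)$ is a fixed number strictly less than $1$ (by the positive-drift argument you cite) but possibly close to $1$ if $\bar v$ is small, while $p_\um(0)$, though decreasing in $L$, need not be small for $L$ of order $1$; so $1-p_\tres(0)-p_\um(0)$ can be negative, and the union bound then fails to show even $\mu(G_0)>0$, which you need in order to "absorb small $L$ by shrinking $c$." The fix is available and in fact makes your argument cleaner: because a trajectory through $(ix_\bullet, i)$ automatically belongs to $\um(0,0)$ and to $\tres(Lx_\bullet, L)$ (your observation), avoiding $W^\treze_0$ is a constraint on the trajectory at times $n\le 0$, while avoiding $W^\treze_{Lx_\bullet, L}$ is a constraint at times $n\ge L$; conditionally on passing through $(ix_\bullet, i)$ these two time windows are disjoint, so the two avoidance events are independent under $\mu(\,\cdot\,\mid W_{ix_\bullet,i})$, and $\mu(G_i)$ factors as a product of two probabilities each bounded below uniformly in $i,L$ by the drift argument. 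With that replacement your proof goes through, but note that the paper's inductive argument avoids all of these cone-hitting estimates, needing only the trivially verified $L=1$ positivity.
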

\begin{proof}
We proceed by induction in $L$.
Recall the definition of $S^{0, 1}$ in Section~\ref{s:construction} and let
\begin{align}\label{e:prprobfill2}
\useconstant{c:probfill} : = \P\left( N(0) \ge k_\star \right) \P\left( S^{0, 1}_n \notin \tres(0) \cup \um(x_\bullet, 1) \;\forall\; n \in \Z
\right)^{k_\star} > 0.
\end{align}
Since $\P \left(  \omega \left(W_0 \setminus ( W^\treze_{x_\bullet, 1} \cup W^{\treze}_0) \right) \ge k_\star \right) \ge \useconstant{c:probfill}$, the claim holds for $L=1$.
Assume that it holds for some $L \ge 1$.
Noting that $\um((i+1) x_\bullet, i+1) \subset \um(i x_\bullet, i)$, write
\begin{align}\label{e:prprobfill1}
  \P \Big( \bigcap_{i=0}^{L} & \left\{ \omega\left(W_{i x_\bullet, i} \setminus (W^\treze_{(L+1) x_\bullet, L+1} \cup W^\treze_0) \right) \ge k_\star \right\} \Big) \nonumber\\
  \ge \; & \P \Bigg( \bigcap_{i=0}^{L-1}\left\{\omega\left(W_{i x_\bullet, i} \setminus (W^\treze_{L x_\bullet, L} \cup W^\treze_0) \right) \ge k_\star \right\} \cap \nonumber\\[-15pt]
  & \qquad \quad \left\{ \omega\left(W_{L x_\bullet, L} \setminus (W^\treze_{(L+1) x_\bullet, L+1} \cup W^\treze_{L x_\bullet, L}) \right) \ge k_\star \right\} \Bigg).
\end{align}
Using now that, for any $i=0, \ldots, L-1$, the sets of trajectories $W_{i x_\bullet, i} \setminus (W^\treze_{L x_\bullet, L} \cup W^\treze_0)$ and $W_{L x_\bullet, L} \setminus (W^\treze_{(L+1) x_\bullet, L+1} \cup W^\treze_{L x_\bullet, L})$ are disjoint, 
and using also the translation invariance of $\P$, we see that the right-hand side of \eqref{e:prprobfill1} equals
\begin{equation}
\P \left(\bigcap_{i=0}^{L-1}\left\{\omega\left(W_{i x_\bullet, i} \setminus (W^\treze_{L x_\bullet, L} \cup W^\treze_0) \right) \ge k_\star \right\} \right)
\P \left( \omega\left(W_0 \setminus (W^\treze_{x_\bullet, 1} \cup W^\treze_0 ) \right) \ge k_\star \right) \geq \useconstant{c:probfill}^{L+1}
\end{equation}
by the induction hypothesis, concluding the proof.
\end{proof}

Our next result is an estimate on the conditional backtrack probability of the random walker, 
which as already mentioned can be seen as a substitute for Lemma~4.5 of \cite{HHSST14}. 
Recall the definition of $\widehat{v}_\star = v_\star \wedge \tfrac12$.
\newconstant{c:height}
\begin{lemma}
\label{l:no_top}
There exists a constant $\useconstant{c:height} > 0$ such that
\begin{equation}
\label{e:no_top}
\P \left( Y^y_n - y \notin \cH_{\widehat{v}_\star, 0} \;\forall\; n \in \N \;\middle| \; \mathcal{F}_y \right)
\ge \useconstant{c:height} \quad \P\text{-a.s.} \;\; \forall\,y \in \Z^d \times \Z.
\end{equation}
\end{lemma}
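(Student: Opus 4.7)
The plan is to lower-bound the desired conditional probability by constructing, with explicit (conditional) probability bounds, two events whose intersection implies $\{Y^y_n - y \notin \cH_{\widehat{v}_\star, 0} \,\forall\, n\}$: a \emph{corridor} event $E^*$ under which the walker is forced to take $L_0$ consecutive steps of size $x_\bullet$ (for a constant $L_0$ to be fixed), and a subsequent \emph{no-backtrack} event for the walker started at $y' := y + (L_0 x_\bullet, L_0)$. The first is handled by Lemma~\ref{l:probfill}, the second by Propositions~\ref{prop:enoughparticles}--\ref{prop:quencheddev}, and the conditioning on $\mathcal{F}_y$ is dealt with via monotonicity combined with an independence decomposition of the particle environment.

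Setting $y = 0$ by translation invariance, I would define $E^*$ as the event that (i) $\omega|_{W^\um_0}$ places at least $k_\star$ trajectories through every $(\ell x_\bullet, \ell)$ for $0 \le \ell < L_0$, and (ii) $U_{(\ell x_\bullet, \ell)} \in [0, p_\star]$ for every such $\ell$. By the construction \eqref{e:defY}, $E^*$ forces $Y_\ell = (\ell x_\bullet, \ell)$ for $\ell \le L_0$. Because $\omega|_{W^\um_0}$ and the variables $(U_z)_{z \in \um(0)}$ are independent of $\mathcal{F}_0$, a straightforward adaptation of Lemma~\ref{l:probfill} (with the excluded set reduced from $W^\treze_0 \cup W^\treze_{Lx_\bullet, L}$ to just $W^\treze_0$) yields $\P(E^* \mid \mathcal{F}_0) \ge c_1 := (\tilde{c}\, p_\star)^{L_0}$ almost surely, for some $\tilde{c} \in (0,1)$ independent of $L_0$.

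On $E^*$, the walker is at $y' = (L_0 x_\bullet, L_0)$ and the target event reduces to: the walker started at $y'$ does not enter $y' + \cH_{\widehat{v}_\star, L_0 \alpha}$, where $\alpha := x_\bullet \cdot e_1 - \widehat{v}_\star \ge \tfrac{1}{2}$. Applying Proposition~\ref{prop:quencheddev} at $y'$ with $v = \widehat{v}_\star$, $K = k_\star$, $\varepsilon = \varepsilon_\star$ and $L = L_0 \alpha$ bounds the quenched backtracking probability by $c^{-1} e^{-c L_0 \alpha}$ on the complement of $\cA := \cA^{L_0 \alpha, \widehat{v}_\star, k_\star, \varepsilon_\star}_{y'}$. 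It remains to bound $\P(\cA \mid \mathcal{F}_0)$.

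The hard part, and the main obstacle, is this last step, since Proposition~\ref{prop:enoughparticles} gives only an \emph{annealed} bound on $\P(\cA)$. To handle it, I would exploit that $\mathbbm{1}_\cA$ is non-increasing in $\omega$ together with the decomposition $N = N_\um + N_\treze$ on $\um(0)$, where $N_\treze \ge 0$ is $\mathcal{F}_0$-measurable while $N_\um$ is independent of $\mathcal{F}_0$: monotonicity then yields $\P(\cA \mid \mathcal{F}_0) \le \P(\tilde\cA)$, where $\tilde\cA$ denotes $\cA$ evaluated with $N$ replaced by $N_\um$. Next, every $(z, n)$ that can be visited by a path $\sigma \in \mathfrak{S}^{\widehat{v}_\star, L_0 \alpha}$ started at $y'$ satisfies $z \cdot e_1 \ge \widehat{v}_\star n$ and $n \ge L_0$, hence lies at distance at least $\tfrac{2}{3}\widehat{v}_\star L_0$ from the boundary of $\um(0)$; a Gaussian/Azuma estimate on the past of a two-sided simple random walk through $(z, n)$ then gives $\mu_\um(z, n) \ge 1 - C e^{-c' L_0}$, ensuring that $\omega|_{W^\um_0}$ is Poisson with per-site intensity at least $\rho/2$ on the relevant region (for $L_0$ large). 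Running the renormalisation of Section~\ref{s:renormalization} with $\omega|_{W^\um_0}$ in place of $\omega$, whose required decoupling properties are inherited from Theorem~\ref{t:decouple}, then gives $\P(\tilde\cA) \le c^{-1} e^{-c(\log L_0)^{3/2}}$. Combining the three bounds,
\begin{equation}
\P\bigl( Y^y_n - y \notin \cH_{\widehat{v}_\star, 0} \,\forall\, n \in \N \,\bigm|\, \mathcal{F}_y \bigr) \ge c_1 - c^{-1} e^{-c L_0 \alpha} - c^{-1} e^{-c (\log L_0)^{3/2}},
\end{equation}
and one fixes $L_0$ as a specific constant (possibly enlarging $\rho_\star$ so the relevant constants in Propositions~\ref{prop:enoughparticles}--\ref{prop:quencheddev} become sharp enough) to make the right-hand side strictly positive, yielding $\useconstant{c:height} > 0$.
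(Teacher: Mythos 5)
Your decomposition into a corridor event $E^*$ (forcing $L_0$ consecutive $x_\bullet$-steps) followed by a no-backtrack event for the walker started at $y'$ matches the paper's structure, and the use of Lemma~\ref{l:probfill} for the corridor and Propositions~\ref{prop:enoughparticles}--\ref{prop:quencheddev} for the backtrack is also the paper's route. However, the way you combine the three probability estimates at the end is fatally flawed. Your final bound
\begin{equation*}
c_1 - c^{-1} e^{-c L_0 \alpha} - c^{-1} e^{-c (\log L_0)^{3/2}}
\end{equation*}
with $c_1 \asymp (\tilde{c}\,p_\star)^{L_0}$ is a \emph{subtraction} of the environment-quality error from the corridor probability. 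But the corridor probability decays \emph{exponentially} in $L_0$, whereas the last error term (coming from Proposition~\ref{prop:enoughparticles}) decays only like $e^{-c(\log L_0)^{3/2}}$, i.e., quasi-polynomially. For $L_0$ large the difference is therefore negative, and for $L_0$ small you have no control over the implicit constants. The suggestion to ``possibly enlarge $\rho_\star$'' does not rescue the argument in a clean way within the paper's fixed set-up, where $\rho_\star$ is pinned down before this lemma. The paper sidesteps the issue entirely by observing that the corridor event $\cB^L_0$ and the complement of the bad-environment event $(\widehat{\cA})^c$ are \emph{both non-decreasing} and applying the FKG inequality (Proposition~\ref{prop:FKG}): this gives the product bound $\P(\cB \cap \cA^c) \ge \P(\cB)\,\P(\cA^c) \ge \useconstant{c:probfill}^{L}\cdot\tfrac12$, which is automatically positive once $L$ is fixed large enough to make $\P(\cA^c)\ge\tfrac12$. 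No delicate tuning of constants is needed, because the factors multiply instead of subtracting.

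A secondary issue is your treatment of $\P(\cA \mid \cF_0)$. You propose to bound it by $\P(\tilde\cA)$ (with $N$ replaced by $N_{\um}$, which is correct by monotonicity) and then to ``run the renormalisation of Section~\ref{s:renormalization} with $\omega|_{W^\um_0}$ in place of $\omega$, whose required decoupling properties are inherited from Theorem~\ref{t:decouple}.'' This inheritance is not automatic: Theorem~\ref{t:decouple} is proved for the unconstrained Poisson process of full double-sided walks, and the restricted process $\omega|_{W^\um_0}$ has a different intensity measure whose trajectories are conditioned to avoid $\tres(0)$, so the heat-kernel and soft-local-time estimates in Appendix~\ref{s:decouple} would have to be redone. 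The paper avoids this by the much simpler observation
\begin{equation*}
\P\bigl(\cA \,\bigm|\, \omega(W^{\treze}_0)=0\bigr) \;\le\; \frac{\P(\cA)}{\P\bigl(\omega(W^{\treze}_0)=0\bigr)}\;\le\;c^{-1}e^{-c(\log L)^{3/2}},
\end{equation*}
because $\P(\omega(W^{\treze}_0)=0)$ is a fixed positive constant independent of $L$ (it equals $e^{-\rho\mu(W^\treze_0)}$). Combined with the ``(4.16)''-type reduction to $\{\omega(W^{\treze}_y)=0\}$ and monotonicity, this avoids re-running the renormalisation with a modified point process.
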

\begin{proof}
For $y \in \Z^d \times \Z$ and $L \in \N$, write $y_{(L)} := y + (L x_\bullet, L)$ and let
\begin{align}\label{e:prnotop3}
\cB^L_y & := \bigcap_{i=0}^{L-1} \left\{ \omega\left(W_{y + (i x_\bullet, i)} \setminus (W^\treze_{y_{(L)}} \cup W^\treze_{y}) \right) \ge k_\star \right\},\nonumber\\
\cC^L_y & := \bigcap_{i=0}^{L-1} \left\{ U_{y + (i x_\bullet, i)} \le p_\star \right\}.
\end{align}
Recall \eqref{e:defpbulletk} and the discussion below it.
Put $L_\star := \lfloor (1-\widehat{v}_\star) L \rfloor$
and abbreviate $\cA^{L_\star}_{y}:= \cA^{L_\star, \widehat{v}_\star, k_\star, \varepsilon_\star}_{y}$ (cf.\ \eqref{e:defcA}).
Note that $\mathcal{B}^L_y$, $\cC^L_y$ are measurable in $\cF_{y_{(L)}}$ to obtain, $\P$-a.s.,
\begin{align}
\label{e:prnotop1}
  \P \Big( Y^y_i - y & \notin \mathcal{H}_{\widehat{v}_\star,0} \,\forall\, i \in \N \;\Big| \; \mathcal{F}_y \Big) \nonumber\\
  \ge \; & \P \left(\cC^L_y, \cB^L_y, (\cA^{L_\star}_{y_{(L)}})^c, \; Y^{y_{(L)}}_n - y_{(L)} \notin \cH_{\widehat{v}_\star, L_\star} \;\forall\, n \in \N \;\middle|\; \mathcal{F}_y \right)\nonumber\\
  = \; & \E \left[ \mathbbm{1}_{\cC^L_y} \mathbbm{1}_{\cB^L_y} \, \P \left( (\cA^{L_\star}_{y_{(L)}})^c, \; Y^{y_{(L)}}_n - y_{(L)} \notin \cH_{\widehat{v}_\star, L_\star} \;\forall\, n \in \N \; \,\middle| \cF_{y_{(L)}} \right) \Big| \; \mathcal{F}_y \right].
\end{align}
Now, since $\cA^{L_\star}_{y_{(L)}}$, $Y^{y_{(L)}}$ are independent of $\cU^{\tres}_{y_{(L)}}$,
\begin{align}\label{e:prnotop1.5}
& \P \left( (\cA^{L_\star}_{y_{(L)}})^c, \; Y^{y_{(L)}}_n - y_{(L)} \notin \cH_{\widehat{v}_\star, L_\star} \;\forall\, n \in \N \; \,\middle| \cF_{y_{(L)}} \right) \nonumber\\
= \, & \P \left( (\cA^{L_\star}_{y_{(L)}})^c, \; Y^{y_{(L)}}_n - y_{(L)} \notin \cH_{\widehat{v}_\star, L_\star} \;\forall\, n \in \N \; \,\middle| \cG^\um_{y_{(L)}} \vee \cG^\treze_{y_{(L)}} \right) \nonumber\\
\ge \, & (1-c^{-1} e^{-c L_\star}) \P \left( (\cA^{L_\star}_{y_{(L)}})^c \,\middle| \cF_{y_{(L)}} \right) \;\;\text{ a.s.}
\end{align}
by Proposition~\ref{prop:quencheddev}  (recall that $\widehat{v}_\star \le v_\star$).
Substituting this back into \eqref{e:prnotop1} and using that $\mathcal{B}_y^{L}$, $\cA^{L_\star}_{y_{(L)}} \in \sigma(\omega)$,
$\cC^L_y \in \sigma(U)$, we obtain that \eqref{e:prnotop1} is a.s.\ larger than
\begin{align}\label{e:prnotop2}
\tfrac12 p_\star^L \, \P \left(\cB^L_y , (\cA^{L_\star}_{y_{(L)}})^c \,\middle|\, \cG^\tres_y \vee \cG^\treze_y \right)
\end{align}
when $L$ is large enough.
Reasoning as for equation (4.16) in \cite{HHSST14}, we see that, $\P$-a.s.,
\begin{equation*}
  1_{\{\omega(W^{\treze}_{y}) = 0\}} \P \left( \cB^L_y, (\cA^{L_\star}_{y_{(L)}})^c \;\middle| \; \cG^\tres_y \vee \cG^\treze_y \right) = 1_{\{\omega(W^{\treze}_{y}) = 0\}} \P \left( \cB^L_0, (\cA^{L_\star}_{L x_\bullet, L})^c \;\middle| \; \omega(W^{\treze}_0) =0 \right).
\end{equation*}
Moreover, since $\cB^L_y \cap (\cA^{L_\star}_{y_{(L)}})^c$ is non-decreasing (in the sense of Definition~\ref{d:monotone_FKG}),
its conditional probability given $\cG^\tres_y \vee \cG^\treze_y$ only increases if $\omega(W^\treze_y) \neq 0$.
Hence, $\P$-a.s.,
\begin{align}\label{e:prnotop5}
\P \left( \cB^L_y, (\cA^{L_\star}_{y_{(L)}})^c \;\middle| \; \cG^\tres_y \vee \cG^\treze_y \right)
& \ge  \P \left( \cB^L_0 ,(\cA^{L_\star}_{L x_\bullet, L})^c \;\middle|\; \omega(W^{\treze}_0) =0 \right) \nonumber\\
& = \P \left( \cB^L_0, (\widehat{\cA}^{L_\star}_{L x_\bullet, L})^c \right)
\end{align}
where
\begin{equation}\label{e:prnotop6}
\widehat{\cA}^{L_\star}_{L x_\bullet, L} := \Big\{ \exists \, \ell \ge L_\star/(2 \mathfrak{R}), \, \sigma \in \mathfrak{S}^{\widehat{v}_\star, L_\star}
\colon\, \sum_{i=0}^{\ell-1} \mathbbm{1}_{\{\omega(W_{\sigma(i)+L x_\bullet, i+L} \setminus W_0) \ge k_\star \}} < (1-\varepsilon_\star) \ell \Big\}.
\end{equation}
Since $\cB^L_0$ and $(\widehat{\cA}^{L_\star}_{L x_\bullet, L})^c$ are functions of $\omega$ only and are both non-decreasing, it follows from Proposition~\ref{prop:FKG} and Lemma~\ref{l:probfill} that \eqref{e:prnotop5} is at least
\begin{equation}\label{e:prnotop7}
\P \left( \cB^L_0 \right) \P\left( (\widehat{\cA}^{L_\star}_{L x_\bullet, L})^c \right)
\ge \useconstant{c:probfill}^L \P\left( (\cA^{L_\star}_{L x_\bullet, L})^c \;\middle|\; \omega(W^{\treze}_0) =0  \right).
\end{equation}
Now note that, by Proposition~\ref{prop:enoughparticles},
\begin{equation}\label{e:prnotop8}
\P\left( \cA^{L_\star}_{L x_\bullet, L} \;\middle|\; \omega(W^{\treze}_0) =0  \right)
\le \frac{\P\left( \cA^{L_\star}_{L x_\bullet, L} \right)}{\P\left(\omega(W^{\treze}_0) =0  \right)}
\le c^{-1} e^{- c(\log L)^{3/2}}
\end{equation}
for some constant $c > 0$.
For fixed $L$  large enough, \eqref{e:prnotop8} is smaller than $1/2$,
and thus \eqref{e:no_top} follows from \eqref{e:prnotop1}--\eqref{e:prnotop8} with $\useconstant{c:height} := \tfrac14 (\useconstant{c:probfill} p_\star)^L$.
\end{proof}

We proceed with the adaptation of Section~4.2 of \cite{HHSST14}.
As in equation (4.21) therein, we define the \emph{influence field}
\begin{equation}\label{e:definfluencefield}
h(y) := \inf \left\{l \in \Z_+ \colon\, \omega(W^\treze_y \cap W^\treze_{y+(l x_\bullet, l)}) = 0  \right\}, \;\;\;\; y \in \Z^d \times \Z.
\end{equation}
Using $x_\bullet \cdot e_1 \ge 1$ and similar arguments as for Lemma~4.3 in \cite{HHSST14}, we obtain:
\begin{lemma}\label{l:hxt_exp}
\newconstant{c:h_xt1}
\newconstant{c:h_xt2}
There exist constants $\useconstant{c:h_xt1}, \useconstant{c:h_xt2} > 0$
such that, for all $y \in \Z^d \times \Z$,
\begin{equation}
\label{e:h_xt_exp}
\P \left( h(y) > l \right) \leq \useconstant{c:h_xt1} e^{-\useconstant{c:h_xt2} l}, \qquad l\in{\mathbb{Z}_+}.
\end{equation}
\end{lemma}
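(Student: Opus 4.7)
The plan is to exploit the Poisson structure of $\omega$. Setting $M_l(y) := W^\treze_y \cap W^\treze_{y+(lx_\bullet, l)}$, the number $\omega(M_l(y))$ is Poisson with parameter $\rho\,\mu(M_l(y))$, so a Markov-type bound gives
\[
\P(h(y) > l) \;\le\; \P(\omega(M_l(y)) \ge 1) \;\le\; \E[\omega(M_l(y))] \;=\; \rho\,\mu(M_l(y)).
\]
By translation invariance of $\mu$, the task reduces to proving $\mu(M_l(0)) \le C e^{-c l}$.

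Next I would decompose by the space-time sites visited by each trajectory. Any $w \in M_l(0)$ must hit $\tres(0)$ at some slice $s \le -1$ and $\um((l x_\bullet, l))$ at some slice $t \ge l$, hence $M_l(0) \subseteq W_{\tres(0)} \cap W_{\um((l x_\bullet, l))}$. Using reversibility together with the independence of past and future of the double-sided walks started at $z$, one gets $\mu(W_{\{y_0\}} \cap W_{\{y_1\}}) = p_{t-s}(x_0, x_1)$ for $y_0 = (x_0,s)$, $y_1 = (x_1,t)$ with $s \le -1 \le l \le t$, where $p_n$ is the random walk transition kernel. A first-moment bound on the number of pairs then yields
\[
\mu(M_l(0)) \;\le\; \sum_{s \le -1}\sum_{t \ge l}\sum_{\substack{x_0 \cdot e_1 < \bar v s \\ |x_1 - l x_\bullet| \le \mathfrak{R}(t-l) \\ x_1 \cdot e_1 \ge l x_\bullet \cdot e_1 + \bar v(t-l)}} p_{t-s}(x_0, x_1).
\]
The defining inequalities of the two cones give $(x_1 - x_0) \cdot e_1 \ge l(x_\bullet \cdot e_1 - \bar v) + \bar v(t-s) \ge \tfrac56 l + \bar v(t-s)$, using $x_\bullet \cdot e_1 \ge 1$ and $\bar v \le \tfrac16$.

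Now I would apply a Gaussian estimate. Summing $p_{t-s}(x_0, x_1)$ over $x_0$ with $x_0 \cdot e_1 < \bar v s$ equals $\P(S_{t-s} \cdot e_1 > x_1 \cdot e_1 - \bar v s)$, which by Azuma's inequality for the $e_1$-projection of a simple symmetric random walk is at most $\exp\bigl(-[\tfrac56 l + \bar v(t-s)]^2 / (2(t-s))\bigr)$. The constraint on $x_1$ restricts it to a set of size $O((t-l+1)^d)$, and with $n := t-s \ge l+1$ there are at most $n$ pairs $(s,t)$ with $t-s = n$. Expanding the exponent as $[\tfrac56 l + \bar v n]^2/(2n) \ge c l + c' n$ for suitable $c, c' > 0$, I obtain
\[
\mu(M_l(0)) \;\le\; C \sum_{n \ge l+1} n^{d+1}\, e^{-c l - c' n} \;\le\; C' e^{-c l},
\]
and combined with the Poisson moment bound this proves the lemma.

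The main obstacle is simply bookkeeping: managing the infinite spatial sum over the half-space slices of $\tres(0)$, absorbing the polynomial prefactors from the finite cone $\um((l x_\bullet, l))$, and cleanly separating the Gaussian exponent into an $l$-part and an $n$-part so that the double sum converges to something exponentially small in $l$. No fundamentally new ingredient beyond the proof of Lemma~4.3 in \cite{HHSST14} is needed; the $d$-dimensional projection along $e_1$ still behaves as a lazy one-dimensional simple random walk, and the geometry of the cones $\um, \tres$ (scaled by $\bar v$ and $\mathfrak{R}$) provides the same kind of quantitative separation in the $e_1$ direction that drove the one-dimensional argument.
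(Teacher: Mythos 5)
Your proof is correct and takes the same route the paper intends: the paper itself gives no proof, instead remarking that one uses $x_\bullet\cdot e_1\ge 1$ and ``similar arguments as for Lemma~4.3 in \cite{HHSST14},'' and your argument is precisely that adaptation (Poisson first-moment bound, reduction to a heat-kernel double sum over $\tres(0)\times\um((lx_\bullet,l))$, Azuma on the $e_1$-projection, and the splitting of the quadratic exponent into an $l$-part and an $n$-part). The bookkeeping checks out: for fixed $n=t-s\ge l+1$ the number of pairs $(s,t)$ is at most $n$, the cone cross-section has size $O(n^d)$, and $\sum_{n\ge l+1}n^{d+1}e^{-c'n}$ is bounded uniformly in $l$, so $\mu(M_l(0))\le Ce^{-cl}$ as claimed.
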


Let
\begin{equation}\label{e:defhatp}
\hat{p} := \useconstant{c:probfill} p_\star > 0
\end{equation}
where $p_\star$ is as in \eqref{e:defpstar} and $\useconstant{c:probfill}$ as in \eqref{e:probfill}.
Analogously to equations (4.28)--(4.29) in \cite{HHSST14}, we set, for $T>1$,
\begin{equation}\label{e:def.deltaepsilonT'T''}
\delta:= \left( - 4 \log( \hat{p} ) \right)^{-1}, \quad \epsilon := \frac{1}{4(d+1)}(\useconstant{c:h_xt2} \delta \wedge 1),
\quad T' = \lfloor T^\epsilon \rfloor,\quad T'' = \lfloor \delta \log(T) \rfloor,
\end{equation}
and we define the \emph{local influence field} at a space-time point $y \in \Z^d \times \Z$ to be:
\begin{equation}\label{e:deflocalfield}
h^T(y):= \inf \left\{l \in \Z_+ \colon\, \omega \big( W^\treze_y \cap W^\treze_{y + (l x_\bullet, l )} \cap W^\um_{y-(\lfloor (\bar{v}/\mathfrak{R} )T' \rfloor x_\bullet, \lfloor (\bar{v}/\mathfrak{R} )T' \rfloor )} \big) = 0 \right\}.
\end{equation}

Note that our definition is slightly different from that of \cite{HHSST14}.
As in Lemma~4.4 therein, we obtain:
\begin{lemma}
\label{l:locinfl}
For all $T > 1$ and all $y \in \Z^d \times \Z$,
\begin{equation}
\label{e:locinfl}
\P \left( h^T(y) > l \;\middle|\; \cF_{y-(\lfloor (\bar{v}/\mathfrak{R} )T' \rfloor x_\bullet, \lfloor (\bar{v}/\mathfrak{R} )T' \rfloor )} \right)
\le \useconstant{c:h_xt1} e^{-\useconstant{c:h_xt2} l} \;\;\; \forall \,l \in \Z_+ \;\;\; \P\text{-a.s.,}
\end{equation}
where $\useconstant{c:h_xt1}, \useconstant{c:h_xt2}$ are the same constants as in
Lemma~{\rm \ref{l:hxt_exp}}.
\end{lemma}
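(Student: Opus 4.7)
The plan is to exploit the fact that the extra intersection factor $W^\um_{y-(\lfloor(\bar v/\mathfrak{R})T'\rfloor x_\bullet, \lfloor(\bar v/\mathfrak{R})T'\rfloor)}$ appearing in \eqref{e:deflocalfield} pins $h^T(y)$ to information coming exclusively from the ``upward'' portion of the Poissonian point process of trajectories seen from the point
\[
z \;:=\; y - \bigl(\lfloor (\bar v/\mathfrak{R}) T' \rfloor x_\bullet,\; \lfloor (\bar v/\mathfrak{R}) T' \rfloor \bigr),
\]
which is decoupled by construction from the sigma-algebra $\mathcal{F}_z$ appearing in the conditioning.

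More precisely, the first step is the measurability claim: for every $l\in\Z_+$, the event $\{h^T(y) > l\}$ lies in $\mathcal{G}^\um_z$. Indeed, unravelling \eqref{e:deflocalfield}, this event is
\[
\bigcap_{l'=0}^{l} \Bigl\{ \omega\bigl( W^\treze_y \cap W^\treze_{y+(l' x_\bullet, l')} \cap W^\um_z \bigr) > 0 \Bigr\},
\]
and each set $W^\treze_y \cap W^\treze_{y+(l' x_\bullet, l')} \cap W^\um_z$ is trivially a subset of $W^\um_z$, so the value of $\omega$ on it is $\mathcal{G}^\um_z$-measurable by the very definition \eqref{e:sigmaalgebrastraj}. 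The second step is to invoke the independence built into the construction: $\mathcal{F}_z = \sigma(\mathcal{G}^\tres_z, \mathcal{G}^\treze_z, \mathcal{U}^\tres_z)$, all three factors of which are independent of $\mathcal{G}^\um_z$ (the first two by the disjointness $W^\um_z \cap W^\tres_z = W^\um_z \cap W^\treze_z = \emptyset$ and the Poissonian independence, and the third because the $U$-field is sampled independently of $\omega$ and supported on disjoint space-time regions from $\um(z)$). Therefore, conditioning on $\mathcal{F}_z$ has no effect:
\[
\P\bigl( h^T(y) > l \,\big|\, \mathcal{F}_z \bigr) \;=\; \P\bigl( h^T(y) > l \bigr) \qquad \P\text{-a.s.}
\]

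The final step is a monotone comparison with the unrestricted influence field. Since $W^\treze_y \cap W^\treze_{y+(lx_\bullet,l)} \cap W^\um_z \subseteq W^\treze_y \cap W^\treze_{y+(lx_\bullet,l)}$, vanishing of $\omega$ on the larger set forces vanishing on the smaller one, so $h^T(y) \le h(y)$ pointwise. Combined with Lemma~\ref{l:hxt_exp}, this gives
\[
\P\bigl( h^T(y) > l \bigr) \;\le\; \P\bigl( h(y) > l \bigr) \;\le\; \useconstant{c:h_xt1} e^{-\useconstant{c:h_xt2} l},
\]
which is exactly \eqref{e:locinfl}.

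There is no genuine obstacle here; the lemma is essentially bookkeeping. The only point that deserves care is to make sure the measurability step uses the definition of $\mathcal{G}^\um_z$ in the form given in \eqref{e:sigmaalgebrastraj} (restricting $\omega$ to trajectories staying inside $W^\um_z$), rather than trying to control trajectories passing through $\um(z)$ directly, which would also touch $\tres$- or $\treze$-trajectories and break the independence. The role of the shift by $\lfloor (\bar v/\mathfrak{R}) T' \rfloor$ is purely geometric and plays no part in this proof, but it is what will make the resulting bound useful in the subsequent arguments controlling the tail of the regeneration time.
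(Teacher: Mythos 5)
Your proof is correct and is essentially a detailed write-up of the paper's own one-line argument, which notes precisely that $h^T(y)$ is independent of $\cF_{y-(\lfloor (\bar{v}/\mathfrak{R} )T' \rfloor x_\bullet, \lfloor (\bar{v}/\mathfrak{R} )T' \rfloor)}$ and that $h^T(y) \le h(y)$. You correctly fill in both steps: the event $\{h^T(y)>l\}$ is $\mathcal{G}^\um_z$-measurable and hence independent of $\mathcal{F}_z = \sigma(\mathcal{G}^\tres_z,\mathcal{G}^\treze_z,\mathcal{U}^\tres_z)$, and the containment of the defining sets gives $h^T \le h$, after which Lemma~\ref{l:hxt_exp} applies.
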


\begin{proof}
Note that $h^T(y)$ is independent of $\cF_{y-(\lfloor (\bar{v}/\mathfrak{R} )T' \rfloor x_\bullet, \lfloor (\bar{v}/\mathfrak{R} )T' \rfloor)}$ and $h^T(y) \le h(y)$.
\end{proof}

As in \cite{HHSST14}, an important definition is that of a \emph{good record time} (g.r.t.):
for $k \in \N$, we call $R_k$ a g.r.t.\ if
\begin{align}
\label{e:good_record1}
& \,\; h^T(Y_{R_k}) \leq T'',\\
\label{e:good_record2}
&
\begin{array}{rcl}
\omega\left(W_{Y_{R_k}+(l x_\bullet, l)} \setminus (W^\treze_{Y_{R_k}} \cup W^\treze_{Y_{R_k}+(T'' x_\bullet, T'' )}) \right) & \ge & k_\star \\
\text{ and } \qquad U_{Y_{R_k} + (lx_\bullet,l)} & \leq & p_\star \end{array} \quad \forall\,l = 0, \dots, T''-1,\\
\label{e:good_record3}
& \,\; \omega(W^{\treze}_{Y_{R_k}+(T''x_\bullet, T'')} \cap W^\um_{Y_{R_k}} ) = 0,\\
\label{e:good_record4}
& \,\; Y_n \in \um(Y_{R_{k+T''}}) \text{ for all } n \in \{R_{k+T''}, \ldots, R_{k+T'}\}.
\end{align}
Note that, when \eqref{e:good_record2} occurs, $Y_{R_k}+(T''x_\bullet,T'') = Y_{R_{k+T''}}$.

With the above definitions and results in place,
only minor modifications are required to adapt the rest of Section~4.2 of \cite{HHSST14} to our setting.
For completeness, we provide below all the details.

The following proposition is the main step in the proof of Theorem~\ref{t:tailregeneration}.
\newconstant{c:manygrts}
%
\begin{proposition}
\label{p:manygrts}
There exists a constant $\useconstant{c:manygrts} >0$ such that, for all $T>1$ large enough,
\begin{equation}
\mathbb{P}\left[\text{$R_k$ is not a g.r.t.\ for all $1\le k \le T$ }\right]
\leq e^{ -\useconstant{c:manygrts} T^{1/2}}.
\end{equation}
\end{proposition}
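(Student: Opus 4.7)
My plan is to identify $N \approx T^{1-\epsilon}$ candidate indices $k_1 < \cdots < k_N$ in $\{1, \ldots, T\}$, spaced by $T'$, so that $N = \lfloor T/T' \rfloor \ge T^{1-\epsilon} \ge T^{3/4}$ (using $\epsilon \le 1/(4(d+1)) \le 1/4$ from \eqref{e:def.deltaepsilonT'T''}). I will then show that at each candidate, conditionally on the past, the probability that $R_{k_j}$ is a g.r.t.\ is at least $c T^{-1/4}$. Multiplying these bounds via the tower property then gives $(1-cT^{-1/4})^N \le e^{-c T^{1/2}}$, as required.

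For the conditional lower bound at a single candidate $k_j$, I will analyze the four g.r.t.\ conditions separately, all conditional on (an appropriate $\sigma$-algebra containing) $\cF_{Y_{R_{k_j}}}$. Condition \eqref{e:good_record1} holds with probability $1 - \useconstant{c:h_xt1}e^{-\useconstant{c:h_xt2} T''} = 1 - o(1)$ by Lemma~\ref{l:locinfl}, and condition \eqref{e:good_record3} is then automatic, since $h^T(Y_{R_{k_j}}) \le T''$ is precisely the absence of the bridging trajectories ruled out by \eqref{e:good_record3}. The expensive one is \eqref{e:good_record2}: its cost factors into $\useconstant{c:probfill}^{T''}$ from Lemma~\ref{l:probfill} applied to the environment in the cone above $Y_{R_{k_j}}$ (which is fresh given $\cF_{Y_{R_{k_j}}}$), times $p_\star^{T''}$ from the independent uniform variables at the $T''$ space-time sites $Y_{R_{k_j}} + (\ell x_\bullet, \ell)$. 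By the choice $T'' = \lfloor \delta \log T\rfloor$ with $\delta = -1/(4 \log \hat{p})$, this product equals $\hat{p}^{T''} \ge c T^{-1/4}$. Finally, condition \eqref{e:good_record4} has conditional probability $\ge \useconstant{c:height}$ by Lemma~\ref{l:no_top} applied at $y = Y_{R_{k_j+T''}}$: on \eqref{e:good_record2} the walker deterministically follows the marked path, so $R_{k+T''} = R_k + T''$ and $Y_{R_{k+T''}} = Y_{R_k} + (T''x_\bullet, T'')$ is a deterministic function of the past; since $\widehat{v}_\star > \bar v$, avoiding $\cH_{\widehat{v}_\star, 0}$ forever implies remaining in $\um(y)$, which is stronger than the finite-horizon requirement of \eqref{e:good_record4}. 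These four events decompose along the independent $\sigma$-algebras $\cG^{\tres}_y$, $\cG^{\treze}_y$, $\cG^{\um}_y$, $\cU^{\tres}_y$, $\cU^{\um}_y$ (with Proposition~\ref{prop:FKG} used for the monotone $\omega$-components), yielding the lower bound $c T^{-1/4}$.

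The main obstacle is iterating this estimate across all $N$ candidates, because $Y_{R_{k_j}}$ depends on the entire past, including the outcomes at earlier candidates. This is resolved precisely by the conditional formulation of Lemma~\ref{l:no_top} and the translation invariance of Lemma~\ref{l:probfill}, which guarantee that the lower bound $c T^{-1/4}$ holds uniformly in the past. Writing $G_j := \{R_{k_j} \text{ is a g.r.t.}\}$ and applying the tower property step by step,
\[
\P\Big(\bigcap_{j=1}^N G_j^c\Big) \le (1 - c T^{-1/4})^N \le \exp\bigl(-c\, T^{\,1-\epsilon-1/4}\bigr) \le \exp(-c T^{1/2}),
\]
completing the proof.
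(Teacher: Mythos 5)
Your overall strategy matches the paper's: condition on a filtration, show that at each candidate record time $R_{k_j}$ the conditional probability of being a g.r.t.\ is at least $cT^{-1/4}$ (the cost coming entirely from \eqref{e:good_record2}), and iterate via the tower property. The arithmetic at the end also checks out. However, there is one genuine gap plus two smaller imprecisions.

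\textbf{The genuine gap} is the claim that \eqref{e:good_record3} is automatic once \eqref{e:good_record1} holds. This is false: the two conditions concern \emph{disjoint} sets of trajectories. Writing $y = Y_{R_k}$ and $y'' = y + (T'' x_\bullet, T'')$, the event $h^T(y) \le T''$ rules out trajectories lying in $W^{\treze}_{y} \cap W^{\treze}_{y''} \cap W^{\um}_{y'}$ (for $y' = y - (\lfloor (\bar v / \mathfrak{R}) T' \rfloor x_\bullet, \lfloor (\bar v / \mathfrak{R}) T' \rfloor)$), all of which by definition of $W^{\treze}_y$ \emph{do} hit $\tres(y)$. Condition \eqref{e:good_record3}, by contrast, requires $\omega(W^{\treze}_{y''} \cap W^{\um}_{y}) = 0$, and trajectories in $W^{\um}_{y}$ by definition do \emph{not} hit $\tres(y)$. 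So $h^T(y) \le T''$ gives you no information whatsoever about \eqref{e:good_record3}, and you need a separate estimate. Fortunately this is cheap — the paper's \eqref{e:good_cond3} bounds the conditional probability below by $\P(\omega(W^{\treze}_0)=0) > 0$ using the independence of $\cG^{\um}_y$ from $\cF_y$ and from \eqref{e:good_record2} — but it is not free; you must include it.

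\textbf{Two smaller points.} First, the spacing $T'$ between the candidates is insufficient for the tower property: since $\{R_k \text{ is a g.r.t.}\} \in \cF_{k+T'}$ (because \eqref{e:good_record4} looks ahead to $R_{k+T'}$) while the conditional estimate is given $\cF_{k-T'}$, you need $k_j - k_{j-1} \ge 2T'$ so that $G_{j-1} \in \cF_{k_j - T'}$. Halving $N$ does not alter the conclusion since $3/4 - \epsilon > 1/2$, but the spacing as you wrote it is wrong; the paper uses spacing $2T'$ with $\lfloor T/3T' \rfloor$ candidates. Second, the estimate for \eqref{e:good_record1} is not ``$1 - o(1)$'': Lemma~\ref{l:locinfl} gives the bound conditional on $\cF_{y - (\lfloor (\bar v/\mathfrak{R})T' \rfloor x_\bullet, \lfloor (\bar v/\mathfrak{R})T' \rfloor)}$, not on $\cF_{Y_{R_k}}$ or $\cF_k$; to exploit it under the conditioning on $\cF_{k-T'}$ you must control the position of $Y_{R_k}$ relative to $Y_{R_{k-T'}}$, which costs a union with the backtracking event handled by Lemma~\ref{l:no_top}, so the correct conclusion is only a constant lower bound $\ge \useconstant{c:height}/2$ rather than $1-o(1)$. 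Since only a constant is needed for this condition, the final bound is unaffected, but the reasoning as stated skips over the real difficulty.
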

\begin{proof}
First we claim that there exists a $c>0$ such that, for any $k > T'$,
\begin{equation}
\label{e:saw_pemba}
\mathbb{P} \left[ R_k \text{ is a g.r.t.} \big| \mathcal{F}_{k-T'} \right]
\geq c T^{\delta \log( \hat{p})}  \text{ a.s.}
\end{equation}
To prove \eqref{e:saw_pemba}, we will find $c>0$ such that
\begin{align}
\label{e:good_cond1}
& \mathbb{P} \big[ \text{\eqref{e:good_record1}} \; \big| \; \mathcal{F}_{k-T'} \big] \geq c
 & \text{ a.s.,}\\
\label{e:good_cond2}
& \mathbb{P} \big[ \text{\eqref{e:good_record2}} \; \big| \; \mathcal{F}_k \big]
\geq T^{\delta \log(\hat{p})}  & \text{ a.s.,}\\
\label{e:good_cond3}
& \mathbb{P} \big[ \text{\eqref{e:good_record3}} \; \big| \; \text{\eqref{e:good_record2}},
\mathcal{F}_k \big] \geq c  & \text{ a.s.,}\\
\label{e:good_cond4}
& \mathbb{P} \big[ \text{\eqref{e:good_record4}} \; \big| \; \mathcal{F}_{k+T''} \big] \geq c
 & \text{ a.s.\ }
\end{align}

Proof of \eqref{e:good_cond1}:
For $B \in \mathcal{F}_{k-T'}$, write
\begin{align} \label{e:manygrts1}
& \P \left(h^T(Y_{R_k}) > T'', B \right)
= \sum_{y_1, y_2 \in \Z^d \times \Z} \P \left(h^T(y_2) > T'', Y_{R_k}
= y_2, Y_{R_{k-T'}}=y_1, B_{y_1} \right).
\end{align}
Note that, if $Y^{Y_{R_{k-T'}}}_n -Y_{R_{k-T'}} \notin \cH_{\widehat{v}_\star, 0}$ for all $n \in \N$,
then $R_{k} \le R_{k-T'} + C T'$ for some constant $C \ge \mathfrak{R} \ge 1$, and 
moreover $Y_{R_{k-T'}} \in \tres\left( Y_{R_k}- (\lfloor (\bar{v}/\mathfrak{R} )T' \rfloor x_\bullet, \lfloor (\bar{v}/\mathfrak{R} )T' \rfloor) \right)$.
Thus we may upper-bound \eqref{e:manygrts1} by
\begin{align}\label{e:manygrts2}
& \sum_{y_1 \in \Z^d \times \Z} \;\,
\sum_{\substack{y_2 \in \Z^d \times \Z \colon |y_2 - y_1|_\infty \le C T', \\ y_1 \in \tres\left(y_2 - (\lfloor (\bar{v}/\mathfrak{R} )T' \rfloor x_\bullet, \lfloor (\bar{v}/\mathfrak{R} )T' \rfloor) \right)}}
\P \left(h^T(y_2) > T'', Y_{R_{k-T'}}=y_1, B_{y_1} \right) \nonumber\\
& \quad + \sum_{y_1 \in \Z^d \times \Z} \P \left( \exists\, n \in \N \colon\, Y^{y_1}_n - y_1 \in \cH_{\widehat{v}_\star,0}, \, Y_{R_{k-T'}}=y_1, B_{y_1} \right) \nonumber\\
\le \; & \left\{ \hat{C} (T')^{d+1} \useconstant{c:h_xt1} e^{-\useconstant{c:h_xt2} T''}
+ 1-\useconstant{c:height} \right\}  \P \left( B \right)
\le \left\{  \hat{C} \useconstant{c:h_xt1} e^{\useconstant{c:h_xt2}} T^{-\frac34 \delta
\useconstant{c:h_xt2}} + 1-\useconstant{c:height} \right\}  \P \left( B \right)
\end{align}
for some constant $\hat{C}> 0$,
where for the first inequality we use Lemmas~\ref{l:no_top} and~\ref{l:locinfl} 
(see also the comment after \eqref{e:sigmaalgebraFxt}), 
and for the second we use the definition of $\epsilon$.
Thus, for $T$ large enough, \eqref{e:good_cond1} is satisfied with $c = \useconstant{c:height}/2$.

Proof of \eqref{e:good_cond2}:
Let $\cB^L_y$ as in \eqref{e:prnotop3} and note that
$\cB^{T''}_y$, $(U_{y+(l x_\bullet, l)})_{l \in \Z_+}$ and $\cF_y$ are jointly independent.
For $B \in \cF_k$, write
\begin{align}\label{e:manygrts3}
\P \left( \cB^{T''}_{Y_{R_k}}, U_{Y_{R_k} + (lx_\bullet,l)} \leq p_\star, \, B\right)
& = \sum_{y \in Z^d \times \Z} \P \left( \cB^{T''}_y, U_{y + (lx_\bullet,l)} \leq p_\star, \, Y_{R_k}=y, B_y \right) \nonumber\\
& = p_\star^{T''} \P \left(\cB^{T''}_0\right) \P \left( B \right)
\end{align}
to conclude that \eqref{e:good_record2} is independent of $\cF_k$.
Then \eqref{e:good_cond2} follows by Lemma~\ref{l:probfill} and \eqref{e:def.deltaepsilonT'T''}.

Proof of \eqref{e:good_cond3}: We may ignore the conditioning on \eqref{e:good_record2} since
this event is independent of \eqref{e:good_record3} and $\cF_k$.
For $B \in \cF_k$, write
\begin{align}
\label{e:manygrts4}
& \mathbb{P} \left(\omega(W^{\um}_{Y_{R_k}}
\cap W^{\treze}_{Y_{R_k}+(T'', T'')} ) =0, B\right) = \sum_{y \in \Z^2} \mathbb{P} \left(\omega(W^{\um}_{y}
\cap W^{\treze}_{y+(T'', T'')} ) =0, Y_{R_k} = y, B_y\right) \nonumber\\
& = \sum_{y \in \Z^2} \mathbb{P} \left(\omega(W^{\um}_{y}
\cap W^{\treze}_{y+(T'', T'')} ) =0 \right) \mathbb{P}\left( Y_{R_k} = y, B_y\right)
\ge \mathbb{P}\left(\omega(W^{\treze}_0) = 0 \right) \mathbb{P} \left( B \right),
\end{align}
where the second equality uses the independence between $\cG^{\um}_y$ and $\cF_y$.

Proof of  \eqref{e:good_cond4}: For $B \in \mathcal{F}_{k+T''}$, write
\begin{align}
\label{e:manygrts5}
& \P \left( Y_n \in \um(Y_{R_{k+T''}}) \;\forall\; R_{k+T''} \le n \le R_{k+T'}, B \right) \nonumber\\
\ge \; & \sum_{y \in \Z^2} \P \left( Y^{y}_n \notin \cH_{\widehat{v}_\star, 0} \;\forall\; n \in \N, Y_{R_{k+T''}}=y, B_y \right) \nonumber\\
\ge \; & \useconstant{c:height} \P \left( B \right)
\end{align}
by Lemma~\ref{l:no_top}.

Thus, \eqref{e:saw_pemba} is verified. Since $\{R_k \text{ is a g.r.t.}\} \in \mathcal{F}_{k+T'}$,
we obtain, for $T$ large enough,
\begin{align}
\label{e:manygrts6}
\mathbb{P} \left( R_k \text{ is not a g.r.t.\ for any } k \le T \right)
& \le \mathbb{P} \left( R_{(2k+1)T'} \text{ is not a g.r.t.\ for any } k \le T/3T' \right) \nonumber\\
& \le \exp \left\{ -\frac{c}{4} \frac{T^{1+{\delta \log(\hat{p})}}}{T'} \right\} \nonumber\\
& \le \exp \left\{-\frac{c}{4}T^{\frac12} \right\}
\end{align}
by our choice of $\epsilon$ and $\delta$.
\end{proof}

The proof of Theorem~\ref{t:tailregeneration} can then be finished as in \cite{HHSST14}.

\begin{proof}[Proof of Theorem~{\rm \ref{t:tailregeneration}}]
Since $\P^{\um}(\cdot) = \P(\cdot | A^0, \omega(W^\treze_0) = 0)$ and
$\P(A^0, \omega(W^\treze_0) = 0)>0$, it is enough to prove the statement under
$\P$. To that end, let
\begin{equation}
\label{prtailreg1}
\begin{aligned}
E_1 & = \{\exists \; y \in [-2 \mathfrak{R} T, 2 \mathfrak{R} T]^d \times [-T,T] \cap \Z^d \times \Z \colon\, h(y) \ge \lfloor (\bar{v}/\mathfrak{R}) T' \rfloor \},\\
E_2 & = \{\exists \; y \in [-2 \mathfrak{R} T, 2\mathfrak{R} T]^d \times [-T,T] \cap \Z^d \times \Z \colon\, Y^y \text{ touches } y
+ \mathcal{H}_{\widehat{v}_\star, \lfloor \bar{v} T' \rfloor}\}.
\end{aligned}
\end{equation}
Then, by Lemma~\ref{l:hxt_exp}, \eqref{e:LD} and a union bound, there exists
a $c>0$ such that
\begin{equation}\label{prtailreg2}
\P \left( E_1 \cup E_2 \right) \le c^{-1} e^{-c  (\log T)^{3/2}} \qquad \forall\,T>1.
\end{equation}
Next we argue that, for all $T$ large enough, if $R_k$ is a good record time with $k \le (\bar{v}/\mathfrak{R}) T$ and
both $E_1$ and $E_2$ do not occur then $\tau \le R_{k+T''} \le T$. Indeed, if $T'' 
\le \bar{v} T/\mathfrak{R}$, then on $E_2^c$ we have $R_{\lfloor (\bar{v}/\mathfrak{R}) T \rfloor +T''} \le T$ since
otherwise $Y$ touches $\mathcal{H}_{\widehat{v}_\star, \lfloor \bar{v} T' \rfloor}$.
Thus we only need to verify that
\begin{equation}
\label{prtailreg3}
\omega(W^\treze_{Y_{R_{k+T''}}})=0
\end{equation}
and that
\begin{equation}
\label{prtailreg4}
A^{Y_{R_{k+T''}}} \text{ occurs }
\end{equation}
under the conditions stated.

To verify \eqref{prtailreg4}, note that, on $E_2^c$, 
we have $Y_{R_{k+T''}} \in [-2 \mathfrak{R} T, 2 \mathfrak{R} T]^d \times[0,T] \cap \Z^d \times \Z$ 
and, moreover, if $T'' < \tfrac12 T'$ then
\begin{equation}\label{prtailreg4.5}
Y_{R_{k+T'}+l} \in \um(Y_{R_{k+T''}}) \; \forall \; l \in \Z_+,
\end{equation}
which together with \eqref{e:good_record4} implies \eqref{prtailreg4}.

To verify \eqref{prtailreg3}, first note that, by \eqref{e:good_record1} and \eqref{e:good_record3},
it is enough to check that
\begin{equation}
\label{prtailreg5}
\omega(W^\treze_{Y_{R_k}} \cap W^{\treze}_{Y_{R_k} - (\lfloor (\bar{v}/\mathfrak{R}) T' \rfloor x_\bullet, \lfloor (\bar{v}/\mathfrak{R}) T' \rfloor)} ) = 0
\end{equation}
on $E_1^c \cap E_2^c$.
Noting that, on $E_2^c$, $Y_{R_k} - (\lfloor (\bar{v}/\mathfrak{R})T' \rfloor x_\bullet, \lfloor (\bar{v}/\mathfrak{R})T' \rfloor) \in [-2 \mathfrak{R} T, 2 \mathfrak{R} T]^d \times [0,T] \cap \Z^d \times \Z$,
 \eqref{prtailreg5} follows from the definitions of $E_1$ and of $h$.

In conclusion, for $T$ large enough we have
\begin{align}
\label{prtailreg8}
\P \left( \tau > T \right)
& \le \P(E_1 \cup E_2) + \P \left( R_k \text{ is not a g.r.t.\ }
\forall \; k \le {\bar v}T  \right) \nonumber\\
& \le c^{-1}e^{-c(\log T)^{3/2}} + e^{-\useconstant{c:manygrts} ({\bar v} T)^{1/2}}
\end{align}
from which \eqref{e:tailregeneration} follows.
\end{proof}


\appendix
\section{Decoupling of space-time boxes}
\label{s:decouple}

The aim of this section is to prove Theorem~\ref{t:decouple}.
The proof is very similar to the proof of Theorem~C.1 in \cite{HHSST14};
only the most important changes are described here.
In the following subsections, we will concentrate on
intermediate results required for item (b) of Theorem~\ref{t:decouple},
i.e., the case where $f_1$, $f_2$ are both non-decreasing.
The non-increasing case will be discussed in the proof of Theorem~\ref{t:decouple}
itself at the end of this Appendix.

The constants in this section will be all \emph{independent} of $\rho$;
this is crucial for the perturbative arguments of Section~\ref{s:renormalization}.

\subsection{Soft local times}
\label{ss:SLT}

We start with a coupling result.
For a Polish space $\Sigma$ and a Radon measure $\mu$ on $\Sigma$,
let $m$ denote the Poisson point process on $\Sigma \times \R_+$ with
intensity measure $\mu \otimes \text{d} v$, where $\text{d} v$ is the Lebesgue measure on $\R_+$. 
We write $m = \sum_{i \in \N} \delta_{z_i,v_i}$ with $(z_i, v_i) \in \Sigma \times \R_+$.

Fix a sequence of independent $\Sigma$-valued random elements $Z_j$, $j \in \N$.
Assume that the law of $Z_j$ is absolutely continuous with respect to $\mu$ with density $g_j$.
As in Appendix~A of \cite{HHSST14}, we define the \emph{soft local times} $G_j:\Sigma \to [0,\infty)$, $j \in \N$
by setting
\begin{equation}
\label{e:defSLT}
\begin{split}
& \xi_{1} = \inf \big\{ t \geq 0\colon\, tg_1(z_i) \geq v_i \text{ for at least one } i \in \N\big\},\\
& \quad G_{1}(z) = \xi_{1} \, g_1(z),\\
& \qquad \quad \vdots\\
& \xi_{k} =  \inf \big\{ t \geq 0\colon\, tg_j(z_i) + G_{k-1}(z_i) \geq v_i \text{ for at least } k
\text{ indices } i \in \N\big\},\\
& \quad G_{k}(z) = \xi_{1} \, g_1(z) + \dots + \xi_{k} \, g_k(z)
\end{split}
\end{equation}

This construction can be used to prove the following.
\begin{lemma}
\label{l:couplesystem}
The random variables $(\xi_j)_{j \in \N}$ in \eqref{e:defSLT} are i.i.d.\ Exp$(1)$.
Furthermore, 
there exists a coupling $\mathbb{Q}$ of $(Z_j)_{j\in \N}$ and $m$ such that,
for any $J \in \N$, $\rho > 0$,
\begin{equation}
\label{Qineq}
\mathbb{Q} \left[ \mathbbm{1}_{H'}\sum_{j \leq J} \delta_{Z_j} \leq  \mathbbm{1}_{H'} \sum_{i\colon v_i < \rho} \delta_{z_i} \right]
\geq \mathbb{Q} \big[ \sup_{z \in H'} G_{J}(z) \leq \rho \big]
\end{equation}
for all compact $H' \subset \Sigma$.
\end{lemma}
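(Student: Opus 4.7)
The plan is to carry out the standard soft local times construction of Popov--Teixeira, mirroring Appendix~A of \cite{HHSST14}. The coupling $\mathbb{Q}$ is defined constructively: under $\mathbb{Q}$, first sample the Poisson point process $m = \sum_i \delta_{(z_i,v_i)}$ on $\Sigma \times \R_+$ of intensity $\mu \otimes \mathrm{d}v$, and then define $(Z_j, \xi_j)_{j \in \N}$ via the recursion \eqref{e:defSLT}. Geometrically, $\xi_k$ is the first time $t > 0$ at which the graph $\{(z, G_{k-1}(z) + t g_k(z)) \colon z \in \Sigma\}$ touches an atom of $m$ not already used at steps $1,\ldots,k-1$; letting $(z_{i_k},v_{i_k})$ denote this atom, we set $Z_k := z_{i_k}$, so that by construction $v_{i_k} = G_k(Z_k)$.

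The main step is an induction on $k$ establishing that, conditionally on $\mathcal{H}_{k-1} := \sigma(Z_1,\xi_1,\ldots,Z_{k-1},\xi_{k-1})$: the variable $\xi_k$ is $\mathrm{Exp}(1)$-distributed; $Z_k$ is independent of $\xi_k$ with law $g_k\, \mathrm{d}\mu$; and the residual process $m_k := \sum_{i \notin \{i_1,\ldots,i_k\}} \delta_{(z_i, v_i - G_k(z_i))}$, restricted to $\Sigma \times \R_+$, is again a Poisson process on $\Sigma \times \R_+$ of intensity $\mu \otimes \mathrm{d}v$, independent of $\mathcal{H}_k$. The proof rests on the restriction and mapping properties of Poisson point processes: by the induction hypothesis the heights of the remaining atoms above $G_{k-1}$ form a fresh Poisson process of intensity $\mu \otimes \mathrm{d}v$, and the $\mu$-mass swept by the graph $t \mapsto G_{k-1} + t g_k$ per unit $t$ equals $\int g_k \,\mathrm{d}\mu = 1$. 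This yields both $\xi_k \sim \mathrm{Exp}(1)$ and, by the standard coloring argument for Poisson processes, $Z_k \sim g_k\, \mathrm{d}\mu$ independent of $\xi_k$. Iterating gives the joint law of $(Z_j, \xi_j)_{j \in \N}$ and in particular the i.i.d.\ $\mathrm{Exp}(1)$ statement for the $\xi_j$.

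Finally, inequality \eqref{Qineq} follows from the geometric identity $v_{i_j} = G_j(Z_j)$. Since $k \mapsto G_k(z)$ is pointwise non-decreasing, on the event $\{\sup_{z \in H'} G_J(z) \le \rho\}$, for each $j \le J$ with $Z_j \in H'$ we have $v_{i_j} \le G_J(Z_j) \le \rho$, and in fact $v_{i_j} < \rho$ $\mathbb{Q}$-a.s.\ since the Poisson process puts no mass on $\Sigma \times \{\rho\}$. As the indices $i_1,\ldots,i_J$ are pairwise distinct, this yields the pointwise measure domination $\mathbbm{1}_{H'}\sum_{j \leq J} \delta_{Z_j} \le \mathbbm{1}_{H'} \sum_{i\colon v_i < \rho} \delta_{z_i}$ on that event, from which \eqref{Qineq} is immediate. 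The main technical delicacy is in propagating the conditional independence of the residual Poisson process through the induction step using the restriction/mapping properties carefully.
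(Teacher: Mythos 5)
Your proposal correctly reproduces the Popov--Teixeira soft local times argument underlying Proposition~A.2 of \cite{HHSST14}, which is exactly what the paper's one-line proof cites. You construct the coupling $\mathbb{Q}$ by sampling $m$ first and reading off $(Z_j,\xi_j)$ via \eqref{e:defSLT} (correctly resolving the $g_j$/$g_k$ typo in the recursion), verify the i.i.d.\ Exp$(1)$ and marginal laws by the standard restriction/mapping induction on the residual Poisson process, and deduce \eqref{Qineq} from the geometric identity $v_{i_j}=G_j(Z_j)$ together with the monotonicity of $k\mapsto G_k$ and the injectivity of $j\mapsto i_j$; this is the same route, just spelled out rather than cited.
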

\begin{proof}
Follows from Proposition~A.2 in \cite{HHSST14} (compare to Corollary~A.3 therein).
\end{proof}

\subsection{Simple random walks}
\label{ss:SRW}
As in \cite{HHSST14}, we will need some basic facts about the heat kernel of random walks on $\Z^d$.
Let $p_n(x,x') = P_x(S^{x,1} = x')$, $x,x'\in\Z^d$, with $P_z$, $S^{z,i}$ as defined in Section~\ref{s:construction}. 
Hereafter we will assume that $S^{1,0}$ is lazy; non-lazy $S^{1,0}$ are bipartite,
and the argument below may adapted as outlined in Remark~C.4 of \cite{HHSST14}.
Lazy $S^{1,0}$ are aperiodic in the sense of \cite{LL10}, 
and thus there exist constants $C, c>0$ such that the following hold for all $n \in \N$:
\begin{align}
\label{e:localclt}
& \sup_{x \in \Z^d}  p_n(0,x) \leq \frac{C}{n^{d/2}}, \\
\label{e:SRW1}
& |p_n(0,x) - p_n(0,x')| \le \frac{C |x-x'|}{n^{(d+1)/2}} \;\;\; \forall \; x, x' \in \Z^d, \\
\label{e:SRW2}
& P_0(|S_n| > \sqrt{n} \log n ) \le \, C e^{-c\log^{2} n}.
\end{align}
For \eqref{e:localclt}, see e.g.\ Lawler and Limic~\cite[Theorem 2.4.4]{LL10}.
To get \eqref{e:SRW1}, use \cite[Theorem~2.3.5 and equation (2.2)]{LL10},
 while \eqref{e:SRW2} follows by an application of e.g.\ Azuma's inequality.

The above inequalities will be used to prove Lemma~\ref{l:integration}
below, regarding the integration of the heat kernel over a sparse cloud of sample points.
In order to state it, we need the following definitions.
\begin{definition}
\label{d:balanced}
(a) We say that a collection of intervals $\{C_i\}_{i \in I}$ is an $L$-paving if
\begin{equation}
\label{e:paving}
|C_i| = L^d  \;\; \forall \; i \in I, \quad \;\; \bigcup_{i \in I} C_i = \Z^d \quad \text{ and } \quad C_i \cap C_j = \emptyset \;\; \forall \; i \neq j \in I.
\end{equation}
(b) For $\rho \in (0,\infty)$, we say that a collection of points $(x_j)_{j \in J} \subset \Z^d$ is $\rho$-sparse with
respect to the $L$-paving $\{C_i\}_{i \in I}$ when
\begin{equation}
\label{e:rhodense}
\#\{j \colon x_j \in C_i\} \le \rho L^d \;\;\; \forall \; i \in I.
\end{equation}
\end{definition}
In the above definition, by interval, we mean a subset of $\mathbb{Z}^d$ that is a Cartesian products of intervals of $\mathbb{Z}$.

The next lemma provides an estimate of the sum of the heat kernel over a sparse collection $(x_j)_{j \in J}$.
\begin{lemma}
\label{l:integration}
There exists $c>0$ such that the following holds.
Let $\{C_i\}_{i \in I}$ be an $L$-paving and $(x_j)_{j \in J}$ be $\rho$-sparse collection
with respect to $\{C_i\}_{i \in I}$. Then, for all $n \ge L$,
\begin{equation}
\sum_{j \in J} p_n(0,x_j) \leq \rho \left\{ 1 + \frac{cL (\log n)^d}{\sqrt{n}}\right\}.
\end{equation}
\end{lemma}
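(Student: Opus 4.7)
The strategy is to use the Lipschitz estimate \eqref{e:SRW1} to compare, on each cell $C_i$, the empirical sum $\sum_{j:x_j\in C_i} p_n(0,x_j)$ with the average of $p_n(0,\cdot)$ over $C_i$; the sparsity bound $n_i := \#\{j:x_j\in C_i\}\le\rho L^d$ then turns that average into a density $\rho$, so the contribution of the averaged parts telescopes across cells to $\rho\sum_{x\in\mathbb{Z}^d} p_n(0,x) = \rho$. This will produce the leading $\rho$; the correction $c\rho L(\log n)^d/\sqrt n$ will come from summing the per-cell Lipschitz errors only over the $O((\sqrt n\log n/L)^d)$ cells that intersect a ball of radius $\sqrt n\log n$ around the origin, using \eqref{e:SRW2} to bypass the far cells.

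Concretely, I would fix for each $i$ any reference point $y_i\in C_i$ and use \eqref{e:SRW1} together with $\mathrm{diam}(C_i)\le dL$ to get $|p_n(0,x)-p_n(0,y_i)|\le CL/n^{(d+1)/2}$ for every $x\in C_i$, so that
\begin{equation*}
\sum_{j:x_j\in C_i} p_n(0,x_j)
\;\le\; n_i\Big(\tfrac{1}{L^d}\sum_{x\in C_i}p_n(0,x) + \tfrac{CL}{n^{(d+1)/2}}\Big)
\;\le\; \rho\sum_{x\in C_i}p_n(0,x) \;+\; \frac{C\rho L^{d+1}}{n^{(d+1)/2}}.
\end{equation*}
I would then split $I = \mathcal{I}_1 \sqcup \mathcal{I}_2$, with $\mathcal{I}_1 := \{i : C_i \cap B \neq \emptyset\}$ and $B := \{x\in\mathbb{Z}^d : |x|\le\sqrt n\log n\}$. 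A straightforward volume count gives $|\mathcal{I}_1|\le C(\sqrt n\log n/L + 1)^d$, so summing the averaged part over $\mathcal{I}_1$ yields at most $\rho\sum_x p_n(0,x) = \rho$, while summing the Lipschitz error over $\mathcal{I}_1$ costs at most $|\mathcal{I}_1|\cdot C\rho L^{d+1}/n^{(d+1)/2} \le C\rho L(\log n)^d/\sqrt n$.

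For the far cells $i\in\mathcal{I}_2$, naively summing the same Lipschitz error would diverge because $|\mathcal{I}_2|=\infty$, so I would abandon \eqref{e:SRW1} there and use \eqref{e:SRW2} directly: every site $x\in\mathbb{Z}^d$ lies in a single cell and therefore supports at most $\rho L^d$ indices, hence
\begin{equation*}
\sum_{j\,:\,x_j\in\bigcup_{i\in\mathcal{I}_2}C_i} p_n(0,x_j)
\;\le\; \rho L^d \sum_{|x|>\sqrt n\log n} p_n(0,x)
\;\le\; C\rho L^d e^{-c\log^2 n}.
\end{equation*}
Since $L\le n$, the super-polynomial decay of $e^{-c\log^2 n}$ dominates $L^d$, so this term is $o\big(\rho(\log n)^d/\sqrt n\big)$ and is absorbed into the main error.

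The main obstacle is precisely this treatment of the far cells: the Lipschitz-plus-averaging mechanism is intrinsically local and produces infinitely many error terms when applied globally. The resolution is to switch tools from \eqref{e:SRW1} to \eqref{e:SRW2} outside a ball of radius $\sqrt n\log n$, using the per-site sparsity consequence $\#\{j:x_j=x\}\le\rho L^d$ to reduce the far contribution to a tail of the heat kernel. Once that is done, combining the two regimes gives $\sum_j p_n(0,x_j)\le \rho + C\rho L(\log n)^d/\sqrt n + o(\rho(\log n)^d/\sqrt n)$, which yields the lemma after adjusting the constant $c$.
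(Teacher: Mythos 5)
Your argument is correct and follows the same underlying strategy as the paper: compare the per-cell empirical sum with the average of $p_n(0,\cdot)$ over the cell, let the averaged parts telescope to $\rho$, and treat the Lipschitz error by a near/far split around the scale $\sqrt{n}\log n$. The cosmetic differences are worth noting. The paper picks $z_i:=\arg\max_{x\in C_i}p_n(0,x)$ so that the per-cell bound $\sum_{j:x_j\in C_i}p_n(0,x_j)\le \rho L^d p_n(0,z_i)$ needs no two-sided Lipschitz estimate, and then controls the resulting error $\sum_i\sum_{x\in C_i}|p_n(0,x)-p_n(0,z_i)|$ by splitting \emph{sites} $x$ into near and far, absorbing the far part into $(1+n^d)P_0(|S_n|>\sqrt n\log n)$. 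You instead use an arbitrary reference point plus the two-sided estimate \eqref{e:SRW1}, and split \emph{cells} into $\mathcal{I}_1,\mathcal{I}_2$; on $\mathcal{I}_2$ you abandon the Lipschitz decomposition entirely and bound $\sum_{j:x_j\in\cup_{\mathcal{I}_2}C_i}p_n(0,x_j)\le\rho L^d P_0(|S_n|>\sqrt n\log n)$ directly from sparsity. This last step is a cleaner and more explicit handling of the far region than what is spelled out in (5.6), and is the only place where your route genuinely departs from the paper's computation.

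Two small caveats, which also affect the paper's argument and are therefore not specific to your write-up. First, both proofs tacitly take each $C_i$ to be a cube of side $L$, so that $\mathrm{diam}(C_i)\lesssim L$; Definition~\ref{d:balanced} only asks $|C_i|=L^d$ for an interval, but the cube interpretation is clearly intended and is what Lemma~\ref{l:couple} uses. Second, the step $|\mathcal{I}_1|\cdot C\rho L^{d+1}/n^{(d+1)/2}\le C\rho L(\log n)^d/\sqrt n$ requires $L\lesssim\sqrt n\log n$; for $L$ of order $n$ (still allowed by the hypothesis $n\ge L$) the inequality fails. The paper's Lipschitz-error sum over $|x|\le\sqrt n\log n$ has the same implicit restriction. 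This is harmless in context because the lemma is only invoked inside Lemma~\ref{l:couple} under the stronger hypothesis $n\ge cL^2$, but it would be worth recording the constraint (or just assuming $n\ge cL^2$ in the lemma statement) to make both proofs airtight.
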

\begin{proof}
For each $i \in I$, choose $z_i \in C_i$ such that
\begin{align}
p_n(0,z_i) = \max_{x \in C_i} p_n(0,x).
\end{align}
Then we have
\begin{align}
\label{e:int1}
& \sum_{j \in J} p_n(0,x_j) = \sum_{i \in I} \sum_{j \colon x_j \in C_i} p_n(0,x_j)
\le \sum_{i \in I} \rho L^d p_n(0,z_i) \nonumber\\
& \le \rho \sum_{i \in I} \sum_{x \in C_i} |p_n(0,x) -p_n(0,z_i)| + \rho.
\end{align}
On the other hand, by \eqref{e:SRW1}--\eqref{e:SRW2} we have (since $p_n(0,z_i) \le P_0(S^{0,1} \in C_i)$)
\begin{align}\label{e:int2}
\sum_{i \in I} \sum_{x \in C_i} |p_n(0,x) -p_n(0,z_i)|
& \overset{n \ge L} \le (1+n^d) P_0(|S_n| > \sqrt{n} \log n) + \sum_{|x| \le \sqrt{n} \log n} \frac{cL}{n^{(d+1)/2}} \nonumber\\
& \le c L (\log n)^d / \sqrt{n}
\end{align}
and the claim follows by combining \eqref{e:int1} and \eqref{e:int2}.
\end{proof}


\subsection{Coupling of trajectories}
\label{ss:couptraj}
Given a sequence of points $(x_j)_{j \in J}$ in $\mathbb{Z}^d$, let $(Z^j_n)_{n \in \Z_+}$, $j \in J$,
be a sequence of independent simple random walks on $\mathbb{Z}^d$ starting at $x_j$, and
let $\bigotimes_{j \in J} P_{x_j}$ denote their joint law. 
The next lemma, analogous to Lemma~B.3 in \cite{HHSST14}, 
provides a coupling of $(Z^j_n)_{j \in J}$ with a product Poisson measure on $\mathbb{Z}^d$. 
\begin{lemma}
\label{l:couple}
There exists a constant $c \ge 1$ such that the following holds.
Let $(x_j)_{j \in J} \subset \mathbb{Z}^d$ be $\rho$-sparse with respect to the $L$-paving
$\{C_i\}_{i \in I}$.
Then for any $\rho' \geq \rho$ there exists a coupling $\mathbb{Q}$ of
$\otimes_{j \in J} P_{x_j}$ and the law of a Poisson point process $\sum_{j' \in J'}
\delta_{Y_{j'}}$ on $\Z^d$ with intensity $\rho'$ such that
\begin{equation}
\mathbb{Q} \left[ \1{H'} \sum_{j \in J} \delta_{Z^j_n} \leq \1{H'}
\sum_{j' \in J'} \delta_{Y_{j'}} \right]
\geq 1 - |H'| \, \exp \left\{ - (\rho' - \rho)L
+ \left(\frac{c \rho L^2 (\log n)^d}{\sqrt{n}} \right) \right\}
\end{equation}
for all finite $H' \subset \Z^d$ and all $n \geq c L^2$.
\end{lemma}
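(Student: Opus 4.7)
The plan is to invoke the soft local time coupling of Lemma~\ref{l:couplesystem}. Take $\Sigma = \mathbb{Z}^d$ with counting measure $\mu$ and let $Z_j := Z^j_n$ denote the position at time $n$ of the $j$-th simple random walk; its density with respect to $\mu$ is $g_j(z) = p_n(x_j,z)$. If $m = \sum_i \delta_{(z_i,v_i)}$ is the Poisson process on $\Sigma \times \mathbb{R}_+$ with intensity $\mu \otimes dv$, then the projection $\sum_{i\colon v_i < \rho'} \delta_{z_i}$ is a Poisson point process on $\mathbb{Z}^d$ of intensity $\rho'$. Applying Lemma~\ref{l:couplesystem} (with $\rho'$ in place of the threshold $\rho$) thus reduces the proof to bounding
\[
\mathbb{Q}\Bigl[\sup_{z \in H'} G_{|J|}(z) > \rho'\Bigr], \qquad G_{|J|}(z) = \sum_{j \in J} \xi_j\, p_n(x_j,z),
\]
with $(\xi_j)_{j \in J}$ i.i.d.\ Exp$(1)$, via a union bound over $z \in H'$.

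I would first estimate the mean. By translation invariance, $\mathbb{E}[G_{|J|}(z)] = \sum_j p_n(0, z-x_j)$, and since $(z - x_j)_j$ is $\rho$-sparse with respect to the translated $L$-paving $(z - C_i)_i$, Lemma~\ref{l:integration} yields
\[
\mathbb{E}[G_{|J|}(z)] \le \rho\bigl\{1 + cL(\log n)^d / \sqrt{n}\bigr\}.
\]
For the deviation, I would apply a Chernoff bound: $\log \mathbb{E}[e^{\lambda G_{|J|}(z)}] = \sum_j \log(1 - \lambda p_n(x_j,z))^{-1}$, valid for $\lambda < 1/\max_j p_n(x_j,z)$. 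By the local CLT \eqref{e:localclt} and the hypothesis $n \ge cL^2$ (with $c$ taken large enough in advance), the maximal weight satisfies $\max_j p_n(x_j,z) \le C' L^{-d}$ with $C'$ arbitrarily small. Choosing $\lambda = L$, then $\lambda \max_j p_n(x_j,z) \le C' L^{1-d} \le 1/2$, and the expansion $-\log(1-x) \le x + x^2$ for $|x| \le 1/2$ combined with the mean bound produces
\[
\mathbb{Q}[G_{|J|}(z) > \rho'] \le \exp\Bigl\{-(\rho'-\rho)L + c\rho L^2 (\log n)^d / \sqrt{n}\Bigr\},
\]
after absorbing the second-order term (of order $L^{2-d}\rho$ in leading order) into the slack, using again $n \ge cL^2$.

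The main obstacle is precisely this balancing of scales: the choice $\lambda = L$ is dictated by the target exponent $-(\rho'-\rho)L$, while the small prefactor $C'$ arising from $n \ge cL^2$ is what allows the second-order Chernoff correction to fit inside the slack term $c\rho L^2 (\log n)^d / \sqrt{n}$. Once the pointwise estimate above is in place, a union bound over $z \in H'$ yields the lemma. The overall argument is a direct adaptation of the proof of Lemma~B.3 in \cite{HHSST14}; apart from the scale-matching just described, I expect the remaining steps to be routine.
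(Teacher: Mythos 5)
Your proposal follows essentially the same route as the paper's proof: reduce via Lemma~\ref{l:couplesystem} to bounding $\mathbb{Q}[\sup_{z\in H'} G_{|J|}(z) > \rho']$ by a union bound, apply a Chernoff bound at $\lambda = L$ to $G_{|J|}(z) = \sum_j \xi_j p_n(x_j,z)$ (factoring the exponential moment over the independent $\xi_j \sim$ Exp$(1)$), use $-\log(1-x)\le x + x^2$ on $[0,1/2]$ justified by the local CLT and $n\ge cL^2$, and control the linear term via Lemma~\ref{l:integration} while absorbing the quadratic term into the slack. The only minor imprecision is describing the second-order correction as being ``of order $L^{2-d}\rho$'' — this is the value at the threshold $n = cL^2$, but for general $n\ge cL^2$ the relevant bound is $L^2 \sup_x p_n(0,x)\cdot\sum_j p_n(x_j,z) \lesssim \rho L^2/n^{d/2}$, which one then compares directly to $c\rho L^2(\log n)^d/\sqrt{n}$ (it fits since $n^{(d-1)/2}(\log n)^d \ge 1$); this is a matter of bookkeeping, not of substance.
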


\begin{proof}
By Lemma~\ref{l:couplesystem}, there exists a coupling $\mathbb{Q}$ such that
\begin{equation}
\label{e:coupleGG}
\mathbb{Q} \left[ \1{H'} \sum_{j \in J} \delta_{Z^j_n} \leq \1{H'}  \sum_{j' \in J'} \delta_{Y_{j'}} \right]
\geq \mathbb{Q} \big[ G_J(z) \leq \rho'\,\,\forall\,z \in H'\big],
\end{equation}
where $G_J(z) = \sum_{j \in J} \xi_j \, p_{n}(x_j, z)$ with $(\xi_j)_j$ i.i.d.\ Exp$(1)$
random variables. 
Write
\begin{align}
\nonumber
&\mathbb{Q} \big[\exists\, z \in H'\colon\,G_J(z) > \rho'\big]
\leq |H'| \sup_{z \in H'}\mathbb{Q} [ G_J(z) > \rho']\\
& \label{e:QGJz}
\qquad \leq |H'| \, e^{-\rho'L} \sup_{z \in H'}E^{\mathbb{Q}} \big[ \exp\{ L G_J(z) \} \big].
\end{align}
If $n \geq c L^2$ with large enough $c \ge 1$, then, by \eqref{e:localclt},
\begin{equation}
\label{e:localclt2}
\sup_{x \in \mathbb{Z}} L p_n(0,x) \leq \frac12.
\end{equation}
Thus we may write, for any $z \in \mathbb{Z}$,
\begin{equation}
E^{\mathbb{Q}} \big[ \exp\{ L G_J(z) \} \big] = \smash{\prod_{j \in J}}
E^{\mathbb{Q}} \big[ \exp\{\xi_j L p_n(x_j,z) \} \big]
= \prod_{\smash{j \in J}} \Big(1 - L p_n(x_j,z) \Big)^{-1}.
\end{equation}
Using \eqref{e:localclt2} and $-\log(1-x) \le x+x^2$ for all $x \in [0,1/2]$,
we obtain
\begin{equation}
\begin{array}{e}
\prod_{j \in J}(1 - L p_n(z,x_j))^{-1}
& \leq & \prod_{j \in J} \exp \big\{ L p_n(z,x_j) \big(1 + L p_n(z,x_j) \big) \big\}\\
& \leq & \exp \Big\{\sum_{j \in J} L p_n(z,x_j) \big( 1 + \sup_{x \in \mathbb{Z}} L p_n (0,x)  \big) \Big\}\\
& \leq & \exp \Big\{ \rho L \big(1 + \tfrac{cL (\log n)^d}{\sqrt{n}} \big) \big( 1 + \tfrac{cL}{\sqrt{n}}  \big) \Big\}\\
&\leq & \exp \Big\{\rho L \big(1 + \tfrac{c' L (\log n)^d}{\sqrt{n}} \big) \Big\}.
\end{array}
\end{equation}
where the last two inequalities are justified using $n \ge c L^2 \ge L$, Lemma~\ref{l:integration} and \eqref{e:localclt}.
Inserting this estimate into \eqref{e:QGJz}, we get the claim.
\end{proof}

\subsection{Proof of Theorem~\ref{t:decouple}}
\label{ss:proofthmdecouple}
We can now finish the:
\begin{proof}[Proof of Theorem~\ref{t:decouple}]
The proof of item (a) can be obtained by adapting Appendixes B--C of \cite{HHSST14} to higher dimensions as follows.
First of all, (B.1)--(B.3) therein should be substituted by their $d$-dimensional counterparts \eqref{e:localclt}--\eqref{e:SRW2} above.
Definition~B.1 therein should be substituted by the $\rho$-dense analogue of Definition~\ref{d:balanced} above, i.e.,
changing ``$\le$'' to ``$\ge$'' in \eqref{e:rhodense}.
One may then follow the arguments given in \cite{HHSST14} to re-obtain
Lemmas~B.2--B.3 therein with the following differences:
in both (B.6) and (B.10) therein, $\log n$ should be substituted by $(\log n)^d$ (analogously to Lemmas~\ref{l:integration}--\ref{l:couple} above).
The proof of Theorem~\ref{t:decouple}(a) then follows 
from these results exactly as in the proof of Theorem~C.1 in \cite{HHSST14}.
The proof of Theorem~\ref{t:decouple}(b) is completely analogous, following from Lemma~\ref{l:couple} above
as Theorem~C.1 in \cite{HHSST14} follows from Lemma~B.3 therein.
\end{proof}


\bibliographystyle{plain}
\bibliography{all}

\end{document}